\documentclass[reqno,11pt]{amsart}

\usepackage[top=2.0cm,bottom=2.0cm,left=3cm,right=3cm]{geometry}
\usepackage{amsthm,amsmath,amssymb,amsfonts,dsfont,mathrsfs}
\usepackage{functan,extarrows,mathtools}
\usepackage{xcolor}
\usepackage[colorlinks,linkcolor=black,citecolor=black]{hyperref}
\usepackage{cite}
\usepackage{indentfirst,latexsym,enumerate,graphicx}
\usepackage{float,relsize,marginnote,stmaryrd,esint,bm}
\usepackage{caption,subfigure,paralist,appendix}

\allowdisplaybreaks

\theoremstyle{plain}
\newtheorem{thm}{Theorem}[section]
\newtheorem{cor}[thm]{Corollary}
\newtheorem{lem}[thm]{Lemma}

\newtheorem{rem}{Remark}[section]

\numberwithin{equation}{section}

\DeclareMathOperator{\dive}{div}

\begin{document}

\title[Compressible Navier--Stokes equations with a potential force]
{Compressible Navier--Stokes equations with a potential force:
global well-posedness and optimal time-decay rates for arbitrarily large $L^2$ initial data}

\author[J. Ni]{Jinkai Ni}
\address[JKN]{School of Mathematics, Nanjing University, Nanjing
 210093, P. R. China}
\email{jinkaini123@gmail.com}

\author[L. Wang] {Luqi Wang $^*$} \thanks{$^*$\! Corresponding author}
\address[LQW]{School of Mathematics, Nanjing University, Nanjing
 210093, P. R. China}
\email{wangluqi@nju.edu.cn}

\author[Z. Zhang]{Zhipeng Zhang}
\address[ZPZ]{School of Mathematical Sciences,
Ocean University of China, Qingdao 266100, P. R. China}
\email{zhangzp@ouc.edu.cn}

\begin{abstract}
We study the Cauchy problem for the three-dimensional barotropic
compressible Navier--Stokes equations with a time-independent
potential force near a spatially nonconstant stationary state.
The potential is controlled in unweighted homogeneous Besov spaces; in particular, no polynomial spatial-weight condition involving $(1+|x|)^j\nabla^j\phi$ is imposed.
For initial data relative to the stationary state that are sufficiently small in $\dot H^{\frac12-\delta}\cap\dot H^3$, we establish the existence and uniqueness of a global strong solution in $H^3$, while allowing the initial $L^2$ norm to be arbitrarily large. If the initial data are bounded in $\dot B^s_{2,\infty}$ for
$s\in[-\frac32,-1)$, then the solution and its first spatial
derivative decay at the optimal rates $(1+t)^{-\frac{k-s}{2}}$ with
$k=0$ and $1$, respectively. The analysis relies on refined homogeneous energy estimates and a frequency-localized description for the dissipative and asymptotic structures of the system.
\end{abstract}

\keywords{Compressible Navier--Stokes equations; Potential force; Global well-posedness; Optimal time decay rates}

\subjclass[2020]{35Q30; 35B40; 76N15}

\maketitle

\setcounter{equation}{0}

\section{Introduction and main results}

\subsection{Background and motivation}
We consider the Cauchy problem for the following compressible Navier--Stokes (CNS) equations with a time-independent potential force in the whole
space $\mathbb R^3$:
\begin{equation}\label{I1}\left\{
\begin{aligned}
& \partial_t \rho+{\rm div}\,(\rho u)=0, \\
&  \partial_t u+u\cdot\nabla u+\frac{\nabla P(\rho)}{\rho}-\frac{\mu}{\rho}\Delta u-\frac{\mu+\lambda}{\rho}\nabla{\rm div}\,u=-\nabla\phi(x), \\
\end{aligned}\right.
\end{equation}
supplemented with the initial data
\begin{align}\label{I2}
(\rho,u)(0,x)=(\rho_0,u_0)(x)\rightarrow(\rho_{\infty},0),
\qquad |x|\rightarrow \infty.
\end{align}
Here, $t>0$ and $x\in\mathbb R^3$. The unknowns $\rho=\rho(t,x)>0$ and $u=u(t,x)\in\mathbb R^3$ denote the density and velocity of the fluid, respectively. The pressure function $P=P(\rho)$ is smooth near the far-field density
$\rho_\infty>0$ and satisfies $P'(\rho_\infty)>0$. 
The viscosity
coefficients $\mu$ and $\lambda$ obey
$\mu>0$ and $3\lambda+2\mu\geq0$.
The term $-\nabla\phi(x)$ represents the time-independent potential force. 
The associated
stationary state is $(\rho_*(x),0)$, where $\rho_*(x)$ satisfies
\begin{align}\label{A1}
\int_{\rho_{\infty}}^{\rho_{*}(x)}
 \frac{P'(z)}{z}\,{\rm d}z+\phi(x)=0,
\end{align}
see, for example, \cite{MN-CMP-1983}.

The well-posedness theory for compressible viscous flows has a long
history. Global classical solutions near a constant non-vacuum state
were first established by Matsumura and Nishida
\cite{MN-PJAA-1979,MN-JMKU-1980}. Local strong solutions in Sobolev
spaces and global solutions allowing large oscillations were developed
in \cite{CK-JDE-2003,CCK-JMPA-2004,HLX-CPAM-2012,HHW-ARMA-2019}.
Critical Besov methods provide another natural framework for separating
the genuinely nonlinear scales from the low-frequency part of the
data, see \cite{BCD-Book-2011,DX-ARMA-2017,XX-JDE-2021} and the
references therein.

For a constant equilibrium, the large-time behavior is governed by the
diffusion-wave structure of the linearized hyperbolic-parabolic
system. Optimal decay estimates were obtained by Ponce
\cite{Ponce-NA-1985}, Hoff and Zumbrun \cite{HZ-IUMJ-1995}, and Liu
and Wang \cite{LW-CMP-1998}. Guo and Wang \cite{GW-CPDE-2012}
developed a pure energy method based on negative Sobolev norms. In the critical regularity setting, 
Danchin and Xu \cite{DX-ARMA-2017} and
Xin and Xu \cite{XX-JDE-2021} used low-high frequency analysis and
negative Besov norms to weaken the low-frequency smallness imposed in earlier approaches. 
Related advances for rough or highly oscillatory
data and for refined high-order decay can be found in
\cite{HW-JDE-2020,LY-JDE-2025}.

The presence of a potential force changes the problem substantially,
because the equilibrium density is no longer spatially homogeneous.
Stability of nonconstant stationary states was studied by Matsumura and
Nishida \cite{MN-CMP-1983} and Matsumura and Padula
\cite{MP-SAACM-1992}. Explicit convergence estimates were obtained by
Deckelnick \cite{Dk-MZ-1992}, Shibata and Tanaka
\cite{ST-JMSJ-2003,ST-CMA-2007}, and Ukai, Yang and Zhao
\cite{UYZ-JHDE-2006}. Duan, Liu, Ukai and Yang
\cite{DLUY-JDE-2007} combined nonlinear energy estimates with
$L^p$--$L^q$ estimates on the semigroup generated by the corresponding linearized operator.
Under an additional
$L^p$ assumption with $1\leq p<\frac65$, they obtained the optimal decay rates of the solution and its first derivative. 
Further results on semigroup and energy decay were given in
\cite{DUYZ-MMMAS-2007,Okita-KJM-2014,Wang-NARWA-2017}.

Gao, Li and Yao \cite{GLY-2023-JDE} subsequently established the
upper and lower bounds of decay rates for higher spatial derivatives by combining
energy estimates, spectral analysis, and a low-high frequency
decomposition. A key assumption in their treatment of the
spatially dependent coefficients is the weighted localization
condition
\begin{align}\label{G1.1}
 \sum_{j=0}^{N+1}
 \big\|(1+|x|)^j\nabla^j\phi\big\|_{L^2\cap L^\infty}
 \ll1.
\end{align}
The weights permit a derivative transfer through Hardy-type estimates,
but they impose strong localization on the potential. Lower bounds for
dissipative systems originate in the Fourier-splitting work of
Schonbek \cite{Schonbek-JAMS-1991}; higher-order and decay-character
methods were developed in \cite{OT-JFA-2000,Bl-2016-SIMA}. More
recently, frequency-localized Besov techniques have clarified how the
linear low-frequency profile determines sharp decay, see
\cite{DX-ARMA-2017,XX-JDE-2021,IOH-JDE-2025}.

A closely related result is due to Deguchi
\cite[Theorems~1.2 and 1.5, and Remark~1.4]{Deguchi-MA-2024}.
He assumed the external force $ F\in\dot B^{-\frac32}_{2,\infty}\cap\dot H^3$
and small initial data in
$\dot B^{\frac12}_{2,\infty}\cap\dot H^3$, without any $L^2$
assumption. When $F=-\nabla\phi$, his force condition is equivalent to $\phi\in\dot B^{-\frac12}_{2,\infty}\cap\dot H^4$.
Our initial-data space satisfies the embedding chain
\begin{align*}
 \dot H^{\frac12-\delta}\cap\dot H^3
 \hookrightarrow
 \dot H^{\frac12}
 \hookrightarrow
 \dot B^{\frac12}_{2,\infty}.
\end{align*}
Thus, our initial-data assumption is stronger, and Deguchi's result
already yields the upper decay rates below whenever his force
assumption is satisfied.
The main difference lies in the admissible potentials. We assume
\begin{align*}
 \phi\in
 \dot B^{\frac12}_{2,1}\cap\dot B^{\frac92}_{2,1},
\end{align*}
which requires higher regularity at high frequencies but no
negative-order localization at low frequencies, and is therefore
incomparable with
$\dot B^{-\frac12}_{2,\infty}\cap\dot H^4$. Indeed, let
$\psi\in\mathcal S(\mathbb R^3)$ have Fourier support in a fixed
annulus, and set
\begin{align*}
 \psi_j(x):=2^{\frac32j}\psi(2^j x),
 \qquad
 \phi_\sharp
 :=
 \epsilon\sum_{j\leq-1}|j|2^{\frac j2}\psi_j .
\end{align*}
Then, up to finite dyadic overlaps,
\begin{align*}
 \|\phi_\sharp\|_{\dot B^{\frac12}_{2,1}}
 +\|\phi_\sharp\|_{\dot B^{\frac92}_{2,1}}
 &\lesssim
 \epsilon\sum_{j\leq-1}|j|
 \bigl(2^j+2^{5j}\bigr)<\infty,\\
 \|\phi_\sharp\|_{\dot B^{-\frac12}_{2,\infty}}
 &\gtrsim
 \epsilon\sup_{j\leq-1}|j|=\infty.
\end{align*}
Hence, $\phi_\sharp$ is admissible in the present framework for
sufficiently small $\epsilon$, but is excluded by Deguchi's
low-frequency condition. The additional assumption
$\phi\in\dot B^{s+\frac32}_{2,\infty}$ in the decay result is likewise
weaker at low frequencies.

For this unweighted class of potentials, we prove global
well-posedness by homogeneous energy estimates and allow the $L^2$-norm of the initial data to be large through spatial spreading. We also
obtain the zeroth- and first-order decay rates under a finite, not
necessarily small, norm assumption in $\dot B^s_{2,\infty}$. Since
\begin{align*}
 L^{p_s}(\mathbb R^3)\hookrightarrow\dot B^s_{2,\infty}(\mathbb R^3),
 \qquad p_s:=\frac6{3-2s}\in\left[1,\frac65\right),
\end{align*}
this assumption is weaker than the $L^{p_s}$ condition in
\cite{DLUY-JDE-2007}. Finally, an explicit decay-character condition
on the prescribed data yields lower bounds matching the upper rates, which strengthens the special-data optimality result in
\cite[Theorem~1.6]{Deguchi-MA-2024}.

\subsection{Main results}
Before stating our main results, we first reformulate the original system \eqref{I1}--\eqref{I2} using the following perturbation variables:
\begin{align*}
\varrho(t,x)=\rho(t,x)-\rho_*(x), \quad
\omega(t,x)=\frac{\rho_{\infty}}{\sqrt{P'(\rho_{\infty})}}u(t,x),\quad \bar\rho(x)=\rho_{*}(x)-\rho_{\infty}.
\end{align*}
With these definitions, the system \eqref{I1}--\eqref{I2} can be rewritten as
\begin{equation}\label{I3}\left\{
\begin{aligned}
& \partial_t \varrho+ \gamma{\rm div}\,\omega=\mathcal{N}_1, \\
&  \partial_t  \omega-\mu_1\Delta\omega-\mu_2\nabla{\rm div}\,\omega+\gamma\nabla\varrho=\mathcal{N}_2, \\
&(\varrho,\omega)(0,x)=(\varrho_0,\omega_0)(x)=\bigg(\rho_0-\rho_*,\frac{\rho_{\infty}}{\sqrt{P'(\rho_{\infty})}}u_0\bigg)(x)\rightarrow(0,0), \quad |x|\rightarrow \infty,
\end{aligned}\right.
\end{equation}
where
\begin{align*}
 \mu_1=\frac{\mu}{\rho_\infty},\qquad
 \mu_2=\frac{\mu+\lambda}{\rho_\infty},\qquad
 \gamma=\sqrt{P'(\rho_\infty)},
\end{align*}
and
\begin{equation}\label{I4}\left\{
\begin{aligned}
  \mathcal{N}_1=&\,-\frac{\mu_1\gamma}{\mu}{\rm div}\, [(\varrho+\bar\rho)\omega],\\
\mathcal{N}_2=&\,-\frac{\mu_1\gamma}{\mu}\omega\cdot\nabla\omega-\mu_1\frac{\varrho+\bar\rho}{\varrho+\rho_*}\Delta\omega-\mu_2\frac{\varrho+\bar\rho}{\varrho+\rho_*}\nabla{\rm div}\,\omega \\
&\, -\frac{\mu}{\mu_1\gamma}\bigg[ \frac{P'(\varrho+\rho_{*})}{\varrho+\rho_*}-\frac{P'(\rho_*)}{\rho_*} \bigg]\nabla\bar\rho-\frac{\mu}{\mu_1\gamma}\bigg[ \frac{P'(\varrho+\rho_*)}{\varrho+\rho_*}-\frac{P'(\rho_{\infty})}{\rho_{\infty}}  \bigg]\nabla\varrho.
\end{aligned}\right.
\end{equation}

We now state the main results.
\begin{thm}\label{T1.1}
Let $(\rho_*,0)$ be the stationary solution associated with
\eqref{I1}. Assume that $(\rho_0-\rho_*,u_0)\in L^2$ and that the
potential function $\phi$ and the initial perturbation satisfy
\begin{align}
\label{G1.7}
\|\phi\|_{\dot B^{\frac{1}{2}}_{2,1}\cap \dot B^{\frac{9}{2}}_{2,1}}+\|(\rho_0 - \rho_*,u_0)\|_{\dot H^{\frac{1}{2}-\delta}\cap \dot H^3}&\leq \varepsilon_0,
\end{align}
for some sufficiently small constant $\varepsilon_0>0$, where
$\delta\in(0,\frac12)$ is fixed but arbitrarily small. Then the
Cauchy problem \eqref{I3} admits a unique global strong solution with
\begin{align*}
 &\varrho\in C([0,\infty);H^3)\cap C^1([0,\infty);H^2),\\
 &\omega\in C([0,\infty);H^3)\cap C^1([0,\infty);H^1),
\end{align*}
satisfying
\begin{align}\label{G1.8}
\|(\varrho,\omega)(t)\|_{H^3}^2+\int_0^t \big(\|\nabla\varrho(\tau)\|_{H^2}^2+\|\nabla\omega(\tau)\|_{H^3}^2\big){\rm d}\tau \leq C \|(\varrho_0,\omega_0)\|_{H^3}^2,
\end{align}
for all $t\geq0$, where $C>0$ is independent of $t$.
\end{thm}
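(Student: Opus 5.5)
We combine local well-posedness with a continuity argument built on a priori estimates in which the smallness is measured only in \emph{homogeneous} norms. The $L^2$ norm is propagated linearly, never required to be small.

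Local existence and uniqueness in $H^3$ for \eqref{I3} near $\rho_*>0$ are standard (Matsumura--Nishida type iteration). Note that $\|\bar\rho\|$ is controlled by $\|\phi\|$ via \eqref{A1} and composition estimates in Besov spaces. It therefore suffices to prove the a priori bound below, for a solution on $[0,T]$ satisfying
\begin{align*}
\mathcal E(t):=\sup_{0\le\tau\le t}\|(\varrho,\omega)(\tau)\|_{\dot H^{\frac12-\delta}\cap\dot H^3}\le\eta \ll 1 .
\end{align*}
The target is
\begin{align*}
\mathcal E(t)^2+\int_0^t\big(\|\nabla\varrho\|_{\dot H^{-\frac12-\delta}\cap \dot H^{2}}^2+\|\nabla\omega\|_{\dot H^{-\frac12-\delta}\cap\dot H^{3}}^2\big)\,{\rm d}\tau\le C\|(\varrho_0,\omega_0)\|_{\dot H^{\frac12-\delta}\cap\dot H^3}^2 .
\end{align*}
Once this holds, the $L^2$ estimate follows separately. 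In the basic energy identity, every term of $\mathcal N_1,\mathcal N_2$ paired with $(\varrho,\omega)$ is bounded by (small homogeneous norm)$\times$(dissipation). For example, $\int \bar\rho\,\omega\cdot\nabla\varrho$ is bounded by $\|\bar\rho\|_{L^\infty}\|\omega\|_{L^2}\|\nabla\varrho\|_{L^2}$. The factor $\|\omega\|_{L^2}$ is not small, so we instead use $\|\bar\rho\|_{L^3}\|\omega\|_{L^6}\|\nabla\varrho\|_{L^2}$ with $\|\bar\rho\|_{L^3}\lesssim\|\phi\|_{\dot B^{1/2}_{2,1}}$. Likewise, the $\nabla\bar\rho$ term pairs as $\|\nabla\bar\rho\|_{L^{3/2}}$-type or $\|\bar\rho\|_{L^3}\|\varrho\|_{L^6}\|\nabla\omega\|_{L^2}$-type bounds. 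This yields \eqref{G1.8} from $L^2\cap\dot H^3$ interpolation. The continuity argument then closes, giving a global solution, with time continuity obtained from the equations.

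The homogeneous estimates proceed in two steps. First, apply $\Lambda^{s}$ for $s\in\{\frac12-\delta,1,2,3\}$ (or interpolate) and use the standard cross term $\int\Lambda^{s-1}\omega\cdot\nabla\Lambda^{s-1}\varrho$ to recover dissipation of $\nabla\varrho$. This produces a Lyapunov functional equivalent to $\|(\varrho,\omega)\|_{\dot H^{\frac12-\delta}\cap\dot H^3}^2$ with dissipation $\|\nabla\varrho\|^2_{\dot H^{-\frac12-\delta}\cap\dot H^2}+\|\nabla\omega\|^2_{\dot H^{-\frac12-\delta}\cap\dot H^3}$. Second, estimate the nonlinear terms by homogeneous product laws in Besov spaces, such as $\|fg\|_{\dot H^{s}}\lesssim\|f\|_{\dot B^{3/2}_{2,1}}\|g\|_{\dot H^s}$ for $|s|<3/2$, with commutator estimates at top order. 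The quadratic terms in $(\varrho,\omega)$ are bounded by $\mathcal E\cdot$dissipation, noting that $\dot H^{\frac12-\delta}\cap\dot H^3$ controls $L^\infty$ and $\dot B^{3/2}_{2,1}$ norms. Top-order loss of derivatives in $\varrho\,{\rm div}\,\omega$ is handled by the usual integration by parts on $\omega\cdot\nabla\Lambda^3\varrho$, using the effective viscous structure.

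The main obstacle is the set of \emph{linear} terms involving the nonconstant background: $\bar\rho\,{\rm div}\,\omega$, $\nabla\bar\rho\cdot\omega$, $\bar\rho\Delta\omega$, and $[P'(\varrho+\rho_*)/(\varrho+\rho_*)-P'(\rho_*)/\rho_*]\nabla\bar\rho\approx\varrho\nabla\bar\rho$. These terms are not small in $\mathcal E$, so they must be absorbed by the dissipation using only smallness of $\phi$. At the lowest level $s=\frac12-\delta$, where we pair with $\Lambda^{2s}$, we must bound $\|\varrho\nabla\bar\rho\|_{\dot H^{-\frac12-\delta}}$ by $\|\phi\|\cdot\|\nabla\varrho\|_{\dot H^{-\frac12-\delta}}$ without weights. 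I plan to use the product law $\dot B^{\frac12}_{2,1}\times\dot H^{1-\delta}\to\dot H^{-\delta}$, followed by Sobolev embedding. Thus $\|\varrho\nabla\bar\rho\|_{\dot H^{-\frac12-\delta}}\lesssim\|\varrho\nabla\bar\rho\|_{L^{q}}$, with $\nabla\bar\rho\in\dot B^{-\frac12}_{2,1}\hookrightarrow L^3$-scale and $\varrho\in \dot H^{\frac12-\delta}$. The total regularity index $\frac12-\delta-\frac12=-\delta>-\frac32$ makes this admissible. This is exactly where $\delta>0$ is needed: at $\delta=0$ the endpoint product fails. At high order, $\bar\rho\in\dot B^{9/2}_{2,1}$ gives the required $\nabla^4\bar\rho$ control. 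With these bounds, all background terms are $\lesssim\varepsilon_0\cdot$dissipation and are absorbed.
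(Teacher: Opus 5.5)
Your overall architecture matches the paper's: a bootstrap on $\|(\varrho,\omega)\|_{\dot H^{\frac12-\delta}\cap\dot H^3}$, a homogeneous Lyapunov functional with cross terms, and then a full $H^3$ estimate in which the zero-order terms are handled by $L^3$--$L^6$ H\"older with $\|\bar\rho\|_{L^3}\lesssim\|\bar\rho\|_{\dot H^{\frac12}}$, so that the $L^2$ norm never appears as a small coefficient. The gap is at the low endpoint, which you have shifted by one derivative. The $\dot H^{\frac12-\delta}$ energy identity dissipates $\|\nabla\omega\|_{\dot H^{\frac12-\delta}}=\|\omega\|_{\dot H^{\frac32-\delta}}$, not $\|\nabla\omega\|_{\dot H^{-\frac12-\delta}}=\|\omega\|_{\dot H^{\frac12-\delta}}$. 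No density dissipation at the level $\|\varrho\|_{\dot H^{\frac12-\delta}}$ is available either. With the dissipation you claim, one gets $\mathcal D\gtrsim\mathcal E$ by interpolation, and hence exponential decay of $\|\nabla(\varrho,\omega)\|_{L^2}$. That is impossible: the linearized eigenvalues $\lambda_\pm(\xi)$ tend to $0$ as $\xi\to0$, and Theorem~\ref{T1.4} gives algebraic lower bounds. Correspondingly, your cross term $\int\Lambda^{s-1}\omega\cdot\nabla\Lambda^{s-1}\varrho$ at $s=\frac12-\delta$ involves $\|\omega\|_{\dot H^{-\frac12-\delta}}$, which the energy does not control. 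Its time derivative also produces $\|\dive\omega\|_{\dot H^{-\frac12-\delta}}^2$, which nothing dissipates. The paper instead uses $\langle\Lambda^{\frac12-\delta}\nabla\varrho,\Lambda^{\frac12-\delta}\omega\rangle$. This term is bounded by $\|\varrho\|_{\dot H^{\frac32-\delta}}\|\omega\|_{\dot H^{\frac12-\delta}}$ and yields $\gamma\|\varrho\|_{\dot H^{\frac32-\delta}}^2$ at the cost of $\|\omega\|_{\dot H^{\frac32-\delta}}^2+\|\omega\|_{\dot H^{\frac52-\delta}}^2$, both of which are dissipated.

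This error propagates into exactly the step you single out as the main obstacle. Your H\"older--Sobolev route needs $\nabla\bar\rho\in L^3$, i.e.\ $\bar\rho$ at the $\dot B^{\frac32}_{2,1}$ level; note that $\dot B^{-\frac12}_{2,1}$ sits at the $L^{\frac32}$ scale, not $L^3$. It gives $\|\varrho\nabla\bar\rho\|_{\dot H^{-\frac12-\delta}}\lesssim\varepsilon_0\|\varrho\|_{\dot H^{\frac12-\delta}}$, which is an energy norm. Paired with $\|\omega\|_{\dot H^{\frac32-\delta}}$, it leaves $C\varepsilon_0^2\|\varrho\|_{\dot H^{\frac12-\delta}}^2$ after Young's inequality. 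The true dissipative norms reach only regularities $\geq\frac32-\delta$ by interpolation, so this term cannot be absorbed; Gronwall then gives growth like $e^{C\varepsilon_0^2t}$, and the bootstrap does not close.

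The roles must be reversed. In the terms where the perturbation is undifferentiated ($\omega\cdot\nabla\bar\rho$, $\varrho\nabla\bar\rho$), the background must sit at its lowest regularity and the perturbation at the dissipative level. The paper writes both pressure contributions, including $\bar\rho\nabla\varrho$ from the last term of $\mathcal N_2$ (which your list omits), as $\nabla\mathcal P$ with $\mathcal P=\bar\rho\varrho p_1(\bar\rho)+\varrho^2p_2(\varrho,\bar\rho)$. It then applies the product law with indices $\frac12$ and $\frac32-\delta$: $\|\mathcal P\|_{\dot H^{\frac12-\delta}}\lesssim(\|\bar\rho\|_{\dot H^{\frac12}}+\|\varrho\|_{\dot H^{\frac12}})\|\varrho\|_{\dot H^{\frac32-\delta}}$. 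Likewise, $\|(\varrho+\bar\rho)\omega\|_{\dot H^{\frac12-\delta}}\lesssim(\|\varrho\|_{\dot H^{\frac12}}+\|\bar\rho\|_{\dot H^{\frac12}})\|\omega\|_{\dot H^{\frac32-\delta}}$. This is where $\phi\in\dot B^{\frac12}_{2,1}$ is genuinely used, and where $\delta>0$ enters, since the product law needs $\frac32-\delta<\frac32$. Your estimate, by contrast, works equally well at $\delta=0$, which signals that it is not the one the argument requires.
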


\begin{rem}
Theorem~\ref{T1.1} imposes smallness only on positive-order
homogeneous norms. Hence, $\|(\rho_0-\rho_*,u_0)\|_{L^2}$ may be
arbitrarily large. It also avoids the polynomially weighted
assumption \eqref{G1.1} from \cite{GLY-2023-JDE} and the weighted
localization imposed in \cite{DLUY-JDE-2007}. Besides, the embedding
$\dot B^{\frac32}_{2,1}\hookrightarrow L^\infty$ gives
\begin{align*}
 \|(\rho_0-\rho_*,u_0)\|_{L^\infty}\lesssim\varepsilon_0.
\end{align*}
Thus,``large $L^2$ initial data'' means small-amplitude data with large
spatial extent, not data with large pointwise oscillation.
\end{rem}

\begin{thm}\label{T1.2}
Let $\varepsilon_0$ and $\delta$ be as in
Theorem~\ref{T1.1}, and let
$s\in[-\frac32,-1)$. There exists a constant
$\varepsilon_1\in(0,\varepsilon_0)$, independent of
$\|(\rho_0-\rho_*,u_0)\|_{L^2}$, such that the following statement
holds. Assume that $(\rho_0-\rho_*,u_0)\in L^2$,
\begin{align}\label{G1.10}
 \|(\rho_0-\rho_*,u_0)\|_{\dot B_{2,\infty}^s}<+\infty,
\end{align}
and for any $0<\varepsilon\leq\varepsilon_1$,
\begin{align}\label{G1.9}
 &\|\phi\|_{
 \dot B^{s+\frac32}_{2,\infty}
 \cap\dot B^{\frac92}_{2,1}}
 +\|(\rho_0-\rho_*,u_0)\|_{
 \dot H^{\frac12-\delta}\cap\dot H^3}
 \leq\varepsilon.
\end{align}
Then the solution $(\varrho,\omega)$ given by
Theorem~\ref{T1.1} satisfies
\begin{align*}
 \sup_{t\geq0}
 \|(\varrho,\omega)(t)\|_{\dot B^s_{2,\infty}}
 \leq
 C\|(\varrho_0,\omega_0)\|_{\dot B^s_{2,\infty}},
\end{align*}
and
\begin{align*}
 \|\nabla^k(\rho-\rho_*)(t)\|_{L^2}
 +\|\nabla^k\omega(t)\|_{L^2}
 \leq
 C(1+t)^{-\frac{k-s}{2}},
 \qquad k=0,1,
\end{align*}
where $C>0$ is independent of $t$.
\end{thm}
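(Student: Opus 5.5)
The plan is to prove the two assertions of Theorem~\ref{T1.2} in sequence. First comes the uniform propagation of the $\dot B^s_{2,\infty}$ norm. Then a Lyapunov-type inequality for the $\dot H^k$ energy, $k=0,1$, which interpolation against the negative Besov norm turns into algebraic decay, as in the pure energy method of Guo--Wang adapted to Besov spaces.

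\textbf{Step 1 (a priori bounds from Theorem~\ref{T1.1}).} Since $\dot B^{s+\frac32}_{2,\infty}\cap\dot B^{\frac92}_{2,1}\hookrightarrow \dot B^{\frac12}_{2,1}\cap \dot B^{\frac92}_{2,1}$ (interpolation, as $s+\frac32<\frac12$), condition \eqref{G1.9} implies \eqref{G1.7}. Theorem~\ref{T1.1}, more precisely its homogeneous version, then gives a global solution. I will use the homogeneous energy estimate from that proof rather than \eqref{G1.8}, since only homogeneous norms are small:
\[
\sup_t\|(\varrho,\omega)\|_{\dot H^{\frac12-\delta}\cap\dot H^3}^2+\int_0^\infty\big(\|\nabla\varrho\|_{\dot H^{-\frac12-\delta}\cap \dot H^2}^2+\|\nabla\omega\|_{\dot H^{-\frac12-\delta}\cap\dot H^3}^2\big)\,d\tau\lesssim\varepsilon^2 .
\]
This bound is independent of the $L^2$ size, which is why $\varepsilon_1$ can be taken independent of $\|(\rho_0-\rho_*,u_0)\|_{L^2}$.

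\textbf{Step 2 (propagation of $\dot B^s_{2,\infty}$).} I will localize \eqref{I3} with $\dot\Delta_j$, do the standard $L^2$ estimate on each block, multiply by $2^{js}$ and take $\sup_j$. Using only the antisymmetry of the $\gamma$-coupling, this gives
\[
\|(\varrho,\omega)(t)\|_{\dot B^s_{2,\infty}}\le \|(\varrho_0,\omega_0)\|_{\dot B^s_{2,\infty}}+\int_0^t\|(\mathcal N_1,\mathcal N_2)\|_{\dot B^s_{2,\infty}}\,d\tau .
\]
The nonlinear terms are of two types.

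Quadratic terms in the perturbation, such as $\varrho\omega$ and $\omega\cdot\nabla\omega$, are handled by the product law $\|fg\|_{\dot B^s_{2,\infty}}\lesssim\|f\|_{\dot H^{\sigma_1}}\|g\|_{\dot H^{\sigma_2}}$ with $\sigma_1+\sigma_2=s+\frac32\in[0,\frac12)$. For $s=-\frac32$ this is just $L^1\hookrightarrow \dot B^{-3/2}_{2,\infty}$. Choosing the exponents so that one factor is controlled by the time-integrated dissipation of Step 1 and the other by $\sup_t$ bounds, or by decay, yields an integrable-in-time bound of size $\varepsilon\times$ (dissipation).

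Linear terms carrying $\bar\rho$, such as $\operatorname{div}(\bar\rho\omega)$, $\bar\rho\Delta\omega$ and $[\ldots]\nabla\bar\rho\sim\varrho\nabla\bar\rho$, are the delicate part. For these I use $\bar\rho\sim\phi\in\dot B^{s+\frac32}_{2,\infty}\cap\dot B^{\frac92}_{2,1}$, with the composition estimate \eqref{A1} transferring the regularity from $\phi$ to $\bar\rho$. A product law then puts the low-frequency weight on $\bar\rho$ and a positive-regularity norm on the perturbation, giving for example
\[
\|\varrho\nabla\bar\rho\|_{\dot B^s_{2,\infty}}\lesssim\|\nabla\bar\rho\|_{\dot B^{s+\frac12}_{2,\infty}}\|\varrho\|_{\dot B^{1}_{2,1}}.
\]
Since $\|\varrho\|_{\dot B^1_{2,1}}$ is not time-integrable a priori, this term must be closed together with the decay rather than through Step 1 alone.

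\textbf{Step 3 (decay).} From the Lyapunov inequality obtained in the proof of Theorem~\ref{T1.1}, with a hypocoercive cross term $\langle\omega,\nabla\varrho\rangle$ and the $\bar\rho$-terms absorbed by smallness, I expect for $k=0,1$
\[
\frac{d}{dt}\mathcal E_k+c\|\nabla^{k+1}(\varrho,\omega)\|_{L^2}^2\le0,\qquad \mathcal E_k\simeq\|\nabla^k(\varrho,\omega)\|_{H^{3-k}}^2 .
\]
The interpolation $\|\nabla^k f\|_{L^2}\lesssim\|f\|_{\dot B^s_{2,\infty}}^{\frac1{k+1-s}}\|\nabla^{k+1}f\|_{L^2}^{\frac{k-s}{k+1-s}}$ then gives $\frac{d}{dt}\mathcal E_k+cM^{-\frac{2}{k-s}}\mathcal E_k^{1+\frac1{k-s}}\le0$, where $M=\sup_t\|(\varrho,\omega)\|_{\dot B^s_{2,\infty}}$. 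Integrating yields $\mathcal E_k\lesssim(1+t)^{-(k-s)}$. For $k=0$ this requires the $L^2$ Lyapunov functional, so the large $L^2$ norm enters only the constant $C$, which is allowed.

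\textbf{Main obstacle (closing Steps 2 and 3 together).} The difficulty is to combine Steps 2 and 3 in a single continuity argument on
\[
X(t)=\sup_{\tau\le t}\Big(\|(\varrho,\omega)\|_{\dot B^s_{2,\infty}}+\sum_{k=0,1}(1+\tau)^{\frac{k-s}2}\|\nabla^k(\varrho,\omega)\|_{L^2}\Big).
\]
The linear $\bar\rho$-terms in Step 2 give contributions like
\[
\varepsilon\int_0^t\|\nabla(\varrho,\omega)\|_{\dot B^{0}_{2,1}}\,d\tau\lesssim\varepsilon X(t)\int_0^t(1+\tau)^{-\frac{1-s}{2}-\eta}\,d\tau ,
\]
and the condition $s<-1$ is exactly what makes $\frac{1-s}2>1$, so this integral converges. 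To reach $\dot B^0_{2,1}$ I will interpolate between the decaying $\dot H^1$ norm and the higher Sobolev norms. If the high-order norms decay too slowly, I will first try assigning them a slower rate, or use the dissipation integral in a Cauchy--Schwarz split. With $\varepsilon\le\varepsilon_1$ the bound becomes $X\lesssim \|(\varrho_0,\omega_0)\|_{\dot B^s_{2,\infty}}+C+\varepsilon X$, which closes.
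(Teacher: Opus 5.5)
\textbf{Gap: the $k=1$ energy inequality in Step~3.} Your $k=0$ argument agrees with the paper: interpolate the $H^3$ Lyapunov inequality $\frac{d}{dt}\mathcal E_3+c\,\mathcal D_3\le0$ against the $\dot B^s_{2,\infty}$ bound. The gap is Step~3 for $k=1$. The inequality $\frac{d}{dt}\mathcal E_1+c\|\nabla^2(\varrho,\omega)\|_{L^2}^2\le0$, with the $\bar\rho$-terms ``absorbed by smallness'', cannot be obtained under unweighted assumptions on $\phi$.

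At the first-derivative level the linear stationary terms contain pieces where every derivative falls on $\bar\rho$. Examples are $\langle\nabla^2\bar\rho\,\omega,\nabla\varrho\rangle$ from $\nabla\operatorname{div}(\bar\rho\omega)$, and $\langle\nabla(\bar\rho p_1(\bar\rho))\,\varrho,\nabla\operatorname{div}\omega\rangle$ from the linear pressure term after one integration by parts. In unweighted norms you only have $\|\nabla\bar\rho\,\varrho\|_{L^2}\le\|\nabla\bar\rho\|_{L^3}\|\varrho\|_{L^6}\lesssim\varepsilon\|\nabla\varrho\|_{L^2}$, which is one derivative short of the dissipation.

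No bound by $\|\nabla^2\varrho\|_{L^2}$ can hold. For $\varrho=\psi(\lambda x)$ with $\lambda\to0$, one has $\|\nabla^2\varrho\|_{L^2}=\lambda^{1/2}\|\nabla^2\psi\|_{L^2}\to0$, while $\nabla\bar\rho\,\varrho\to\psi(0)\nabla\bar\rho$ in $L^2$. Closing it would require Hardy-type decay such as $|\nabla\bar\rho|\lesssim|x|^{-2}$, which is exactly the weighted hypothesis the theorem avoids.

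What you actually get is $\frac{d}{dt}\mathcal E_1+c\|\nabla^2(\varrho,\omega)\|_{L^2}^2\le C\varepsilon^2\|\nabla(\varrho,\omega)\|_{L^2}^2$. The right-hand side is of energy size at low frequencies and linear in $\mathcal E_1$. Your interpolated dissipation $\mathcal E_1^{1+\frac1{1-s}}$ is superlinear, so the ODE argument does not give $(1+t)^{-(1-s)}$. Your Step~2 closes the linear $\bar\rho$-terms only through time-integrability of $\|\nabla(\varrho,\omega)\|$, that is, through this same rate. Hence the whole bootstrap on $X(t)$ is left open.

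\textbf{Missing idea: treat low frequencies by Duhamel.} Handle the low frequencies by Duhamel rather than by energy, with the linear stationary terms treated as sources. Using $\phi\in\dot B^{s+\frac32}_{2,\infty}$, one has
$\|[\operatorname{div}(\bar\rho\omega)]^L\|_{\dot B^s_{2,\infty}}+\|[\tfrac{\bar\rho}{\rho_*}\nabla^2\omega]^L\|_{\dot B^s_{2,\infty}}+\|[\nabla(\bar\rho p_1(\bar\rho)\varrho)]^L\|_{\dot B^s_{2,\infty}}\lesssim\varepsilon\|\nabla(\varrho,\omega)\|_{L^2}$.
The linearized semigroup maps $\dot B^s_{2,\infty}$ into $\dot H^1$ at low frequency with rate $(1+t)^{-\frac{1-s}2}$. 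Since $\frac{1-s}2>1$,
$\int_0^t(1+t-\tau)^{-\frac{1-s}2}(1+\tau)^{-\frac{1-s}2}\,d\tau\lesssim(1+t)^{-\frac{1-s}2}$.
So these terms cost only $\varepsilon\mathcal M_1(t)$, where $\mathcal M_1(t)=\sup_{\tau\le t}(1+\tau)^{\frac{1-s}2}\|\nabla(\varrho,\omega)(\tau)\|_{L^2}$; this is where $s<-1$ is genuinely needed.

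At high frequencies the energy-level remainder is absorbed by Bernstein's inequality. This gives a functional $\mathfrak E_1\sim\|\nabla(\varrho,\omega)\|_{L^2}^2$ with $\frac{d}{dt}\mathfrak E_1+\lambda_4\mathfrak E_1\lesssim\|\nabla(\varrho,\omega)^L\|_{L^2}^2$. The two pieces close as $\mathcal M_1\le C(1+\varepsilon\mathcal M_1)$.

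\textbf{Decoupling Step~2 from the decay.} The paper does not tie Step~2 to the decay. It propagates $\dot B^s_{2,\infty}$ with the hypocoercive localized energy, whose low-frequency damping is $\lambda_4 2^{2j}$. The unavoidable high-coefficient/low-solution paraproduct has size $\varepsilon 2^{(1-s)j}\|(\varrho,\omega)\|_{\dot B^s_{2,\infty}}^2$. After time integration against this damping it contributes only $\varepsilon 2^{(-1-s)j}\le\varepsilon$ for $j\le0$. Your conservative block estimate discards exactly this damping.
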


\begin{rem}
For $s<-1$, a dyadic decomposition at frequency one yields
\begin{align*}
 \|\phi\|_{\dot B^{\frac12}_{2,1}}
 \lesssim_s
 \|\phi\|_{\dot B^{s+\frac32}_{2,\infty}}
 +\|\phi\|_{\dot B^{\frac92}_{2,1}}.
\end{align*}
Consequently, after decreasing $\varepsilon_1$ if necessary,
\eqref{G1.9} implies
\begin{align*}
 \|\phi\|_{
 \dot B^{\frac12}_{2,1}
 \cap\dot B^{\frac92}_{2,1}}
 \leq\varepsilon_0.
\end{align*}
Thus the potential assumption of Theorem~\ref{T1.1} is automatically
satisfied.
\end{rem}

\begin{rem}\label{R1.2}
The assumption \eqref{G1.10} is weaker than the additional
$L^{p_s}$ condition imposed in \cite{DLUY-JDE-2007}. In particular,
the initial $\dot B^s_{2,\infty}$ norm is only required to be finite
and need not be small; this is consistent with the low-frequency
framework developed in \cite{Xu-CMP-2019}.
The restriction to $k=0,1$ is due to the spatial dependence of the
stationary density. Indeed, starting from the second derivative level,
differentiating terms such as
$\operatorname{div}(\bar\rho\,\omega)$ produces
$(\nabla^3\bar\rho)\omega$. Although the stationary profile is small,
this term contains no derivative of $\omega$ and can only be controlled
through the first-order dissipation. It therefore cannot be absorbed
in a second-order decay estimate. In \cite{GLY-2023-JDE}, this
derivative loss is overcome by weighted Hardy estimates under stronger
spatial localization assumptions. Such a mechanism is not available
in the present unweighted setting.
\end{rem}

Observe that the embeddings: $ L^1(\mathbb R^3) \hookrightarrow
\dot B^{-\frac32}_{2,\infty}(\mathbb R^3)$, $L^{\frac65}(\mathbb R^3) \hookrightarrow
\dot H^{-1}(\mathbb R^3)\hookrightarrow
\dot B^{-1}_{2,\infty}(\mathbb R^3)$ and $ H^{\frac92+\delta}(\mathbb R^3) \hookrightarrow
\dot B^{\frac92}_{2,1}(\mathbb R^3)$ hold for any $\delta>0$. Then the endpoint
$s=-\frac32$ in Theorem \ref{T1.2} gives the following consequence.

\begin{cor}\label{cor1.3}
Let $\varepsilon_0$ be as in Theorem~\ref{T1.1}, and assume that
$(\rho_0-\rho_*,u_0)\in L^2$. Then there exists a constant
$\varepsilon_2\in(0,\varepsilon_0)$, independent of the size of the
initial $L^2$ norm, such that, for any
$\varepsilon\leq\varepsilon_2$, if
\begin{align*}
\|\phi\|_{H^{\frac{9}{2}+\delta} }+\|(\rho_0 - \rho_*,u_0)\|_{\dot H^{\frac{1}{2}-\delta}\cap \dot H^3}&\leq \varepsilon,
\end{align*}
for some fixed but arbitrarily small
$\delta\in(0,\frac12)$, and if
\begin{align*}
\|(\rho_0-\rho_*,u_0)\|_{L^1}<+\infty,
\end{align*}
then the solution $(\varrho,\omega)$ given by
Theorem~\ref{T1.1} satisfies
\begin{align*}
\|\nabla^k(\rho-\rho_*)(t)\|_{L^2}+\|\nabla^k\omega(t)\|_{L^2}\leq&\, C(1+t)^{-\frac{2k+3}{4}},
\end{align*}
for $k=0,1$, where $C$ is independent of $t$.
\end{cor}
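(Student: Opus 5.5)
The corollary follows from Theorem~\ref{T1.2} at the endpoint $s=-\frac32$, so the plan is to check its hypotheses \eqref{G1.10} and \eqref{G1.9} with this $s$ and then read off the rates. With $s=-\frac32$ the rate $(1+t)^{-\frac{k-s}{2}}$ becomes $(1+t)^{-\frac{2k+3}{4}}$, which is the claimed one.

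\emph{Step 1 (initial data).} The embedding $L^1(\mathbb R^3)\hookrightarrow\dot B^{-\frac32}_{2,\infty}(\mathbb R^3)$ comes from Bernstein's inequality:
\begin{align*}
\|\dot\Delta_j f\|_{L^2}\lesssim 2^{\frac32 j}\|\dot\Delta_j f\|_{L^1}\lesssim 2^{\frac32 j}\|f\|_{L^1},
\end{align*}
using the uniform $L^1$ bound of the kernels of $\dot\Delta_j$. Hence $\|(\rho_0-\rho_*,u_0)\|_{L^1}<\infty$ gives \eqref{G1.10} with $s=-\frac32$. This norm is only finite, not small, which is all Theorem~\ref{T1.2} requires.

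\emph{Step 2 (potential).} We need
\begin{align*}
\|\phi\|_{\dot B^{0}_{2,\infty}\cap\dot B^{\frac92}_{2,1}}\lesssim\|\phi\|_{H^{\frac92+\delta}}.
\end{align*}
For the first norm, $\|\dot\Delta_j\phi\|_{L^2}\le C\|\phi\|_{L^2}$ uniformly in $j$, so $\dot B^0_{2,\infty}$ is controlled by $L^2$. For the second, split into $j\le0$ and $j>0$. Low frequencies give
\begin{align*}
\sum_{j\le0}2^{\frac92 j}\|\dot\Delta_j\phi\|_{L^2}\lesssim\|\phi\|_{L^2},
\end{align*}
since the geometric series converges. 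High frequencies give, by Cauchy--Schwarz,
\begin{align*}
\sum_{j>0}2^{\frac92 j}\|\dot\Delta_j\phi\|_{L^2}\le\Big(\sum_{j>0}2^{-2\delta j}\Big)^{\frac12}\Big(\sum_{j>0}2^{2(\frac92+\delta)j}\|\dot\Delta_j\phi\|_{L^2}^2\Big)^{\frac12}\lesssim_\delta\|\phi\|_{H^{\frac92+\delta}}.
\end{align*}
Thus the left side of \eqref{G1.9} is at most $C_\delta\varepsilon$. Choosing $\varepsilon_2:=\min\{\varepsilon_1/C_\delta,\varepsilon_1\}<\varepsilon_0$ makes \eqref{G1.9} hold with $\varepsilon\le\varepsilon_1$. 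This $\varepsilon_2$ depends only on $\delta$ and $\varepsilon_1$, so it is independent of the initial $L^2$ norm, as Theorem~\ref{T1.2} guarantees for $\varepsilon_1$.

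\emph{Step 3 (conclusion).} Theorem~\ref{T1.2} applies with $s=-\frac32$ and yields the decay estimates for $k=0,1$. One bookkeeping point: Theorem~\ref{T1.2} bounds $\nabla^k\omega$, and since $\omega=\frac{\rho_\infty}{\sqrt{P'(\rho_\infty)}}u$, the same rate holds for $u$ up to a constant.

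There is no genuine obstacle here. The only points needing care are the endpoint inclusion $s=-\frac32\in[-\frac32,-1)$, which holds, and that the loss of $\delta$ in the high-frequency summability is absorbed into the choice of $\varepsilon_2$.
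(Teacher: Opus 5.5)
Your proposal is correct and matches the paper's argument: the paper also obtains the corollary by taking $s=-\frac32$ in Theorem~\ref{T1.2}, using $L^1\hookrightarrow\dot B^{-\frac32}_{2,\infty}$ for the data and $H^{\frac92+\delta}\hookrightarrow\dot B^{\frac92}_{2,1}$ for the potential. You additionally spell out the $\dot B^{0}_{2,\infty}$ control of $\phi$ (the $s+\frac32=0$ part of \eqref{G1.9}) and the choice of $\varepsilon_2$, both of which the paper leaves implicit.
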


To formulate the lower bounds, we use the decay-character class
introduced in \cite[Section~3]{Bl-2016-SIMA}. For
$s_1\in\mathbb R$, define
\begin{equation}\label{G1.15}
\dot{\mathfrak B}_{2,\infty}^{s_1}:=
\left\{ f\in\dot B_{2,\infty}^{s_1}\,\Bigg|
 \begin{array}{l}
 \exists\,\mathfrak a_*,L_*>0,\quad
 \exists\,\{j_m\}_{m\geq1}\subset\mathbb Z
 \text{ strictly decreasing},\\
 j_m\to-\infty,\quad 1\leq j_m-j_{m+1}\leq L_*,\\
 2^{s_1j_m}\|\dot\Delta_{j_m}f\|_{L^2}\geq\mathfrak a_*
 \quad\text{for any }m\geq1
 \end{array}
 \right\}.
\end{equation}
For a vector-valued function $f$, the $L^2$ norm in \eqref{G1.15} is understood
as the product norm of its components.

\begin{thm}\label{T1.4}
Under the assumptions of Theorem~\ref{T1.2}, assume in addition that
$(\rho_0-\rho_*,u_0)$ satisfies
\begin{align}\label{G1.16}
(\rho_0 - \rho_*, u_0) \in \dot{\mathfrak B}_{2,\infty}^{s}
\quad\text{with}\quad s \in \left[-\frac{3}{2}, -1\right).
\end{align}
Then, for the global strong solution $(\varrho,\omega)$ to
\eqref{I3}, there exists $T_1>0$ such that, for all $t\geq T_1$,
\begin{align}\label{G1.17}
c(1 + t)^{-\frac{k - s}{2}} \leq \|\nabla^k(\rho - \rho_*,\omega)(t) \|_{L^2} \leq C(1 + t)^{-\frac{k - s}{2}},
\end{align}
for $k=0,1$, where $c,C>0$ are independent of $t$.
\end{thm}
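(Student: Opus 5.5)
The upper bound in \eqref{G1.17} is exactly Theorem~\ref{T1.2}, so only the lower bound needs proof. I will write the solution in Duhamel form $U(t)=e^{t\mathcal L}U_0+\int_0^t e^{(t-\tau)\mathcal L}\mathcal N(\tau)\,{\rm d}\tau$. Here $U=(\varrho,\omega)$, $\mathcal L$ is the constant-coefficient operator of \eqref{I3}, and $\mathcal N=(\mathcal N_1,\mathcal N_2)$. The goal is to show that the linear part carries the sharp lower bound, while the Duhamel term is smaller by a factor that can be made small through $\varepsilon$ or through an extra power of $t$.

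\textbf{Step 1: lower bound for the linear flow.} For low frequencies $2^j\le c_0$, the symbol of $\mathcal L$ diagonalizes into a heat part (the incompressible component of $\omega$) and damped acoustic waves $e^{-\frac{\mu_1+\mu_2}{2}|\xi|^2t\pm i\gamma|\xi|t}$. Since these propagators are unitary up to the dissipative factor, one gets $\|\dot\Delta_j e^{t\mathcal L}U_0\|_{L^2}\ge c\,e^{-C2^{2j}t}\|\dot\Delta_j U_0\|_{L^2}$. Given $t$ large, I choose $m$ with $2^{2j_m}t\sim 1$; this is possible because $1\le j_m-j_{m+1}\le L_*$. Definition \eqref{G1.15} then gives
\begin{align*}
\|\nabla^k e^{t\mathcal L}U_0\|_{L^2}\gtrsim 2^{kj_m}\|\dot\Delta_{j_m}e^{t\mathcal L}U_0\|_{L^2}\gtrsim \mathfrak a_*\,2^{(k-s)j_m}\gtrsim \mathfrak a_*(1+t)^{-\frac{k-s}{2}}.
\end{align*}

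\textbf{Step 2: upper bound for the nonlinear Duhamel term.} I aim for $\|\nabla^k\int_0^t e^{(t-\tau)\mathcal L}\mathcal N\,{\rm d}\tau\|_{L^2}\le C\eta(1+t)^{-\frac{k-s}{2}}$, with $\eta$ small relative to $\mathfrak a_*$. For high frequencies, exponential decay together with the decay of Theorem~\ref{T1.2} gives a faster rate. For low frequencies, I use $\|e^{t\mathcal L}f\|_{\dot H^k}\lesssim t^{-\frac{k-\sigma}{2}}\|f\|_{\dot B^\sigma_{2,\infty}}$ and split the time integral at $t/2$. The quadratic terms, such as $\omega\cdot\nabla\omega$ and $\varrho\nabla\varrho$, decay like $(1+\tau)^{-\frac{3}{2}-(\cdot)}$ in $L^1\hookrightarrow\dot B^{-3/2}_{2,\infty}$ by Theorem~\ref{T1.2}. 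Since $-s\le\frac32$, these yield a faster rate $t^{-\frac{k-s}{2}-\theta}$ for some $\theta>0$, up to the endpoint $s=-\frac32$, where the gain comes from $\tau^{-\frac{3}{2}+s}$ being integrable.

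\textbf{Step 3: the linear-in-$U$ terms with $\bar\rho$ coefficients (main obstacle).} The difficulty is the terms that are linear in $U$ with $\bar\rho$-dependent coefficients, such as ${\rm div}(\bar\rho\omega)$, $\bar\rho\Delta\omega$, and $\varrho\nabla\bar\rho$. These give no extra time decay. I will instead use the smallness of $\phi$, hence of $\bar\rho$, in $\dot B^{s+\frac32}_{2,\infty}\cap\dot B^{\frac92}_{2,1}$. Product laws give $\|\bar\rho\,\omega\|_{\dot B^{s}_{2,\infty}}\lesssim\|\bar\rho\|_{\dot B^{3/2}_{2,1}}\|\omega\|_{\dot B^s_{2,\infty}}$ and, for the derivative-free terms, $\|\varrho\nabla\bar\rho\|_{\dot B^{s}_{2,\infty}}\lesssim\|\nabla\bar\rho\|_{\dot B^{s+\frac32}_{2,\infty}}\|\varrho\|_{\dot B^{0}_{2,1}}$-type bounds. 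Combined with the decay from Theorem~\ref{T1.2}, these should give a contribution $\le C\varepsilon(1+t)^{-\frac{k-s}{2}}$.

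\textbf{Step 4: choice of constants.} Because $\varepsilon_1$ is independent of the data size, I must check that $C$ depends only on $\|U_0\|_{\dot B^s_{2,\infty}}$ linearly and on $\mathfrak a_*$, so that $C\varepsilon<\frac12 c\,\mathfrak a_*$. If this is not achievable, the fallback is to use the time-gain from Step 2 for the truly nonlinear part and to shrink $\varepsilon_1$ only through the coefficient terms, since those are linear in $U$ and their constant scales with $\|U_0\|_{\dot B^s_{2,\infty}}$, which is comparable to $\mathfrak a_*$ only up to data-dependent ratios. The safe route may therefore require allowing $T_1$ and the lower constant $c$ to depend on the data, which the statement permits. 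Combining Steps 1–3 gives the lower bound for $t\ge T_1$.
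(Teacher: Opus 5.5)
Your Step~1 is fine. The argument breaks at Steps~3--4, and the fallback you propose does not repair it. You compare global norms, $\|\nabla^kU(t)\|_{L^2}\ge\|\nabla^k e^{t\mathcal L}U_0\|_{L^2}-\|\nabla^kD(t)\|_{L^2}$ with $D$ the Duhamel term, so you need $\|\nabla^kD(t)\|_{L^2}\le\frac12 c\,\mathfrak a_*(1+t)^{-\frac{k-s}{2}}$. For the terms that are linear in $U$ with stationary coefficients this is out of reach, for two reasons.

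First, the product laws you propose do not even give the rate. The bound $\|\bar\rho\,\omega\|_{\dot B^s_{2,\infty}}\lesssim\varepsilon\|\omega\|_{\dot B^s_{2,\infty}}$ carries no time decay at all. Bounding $\varrho\nabla\bar\rho$ through $\|\varrho\|_{\dot B^0_{2,1}}\lesssim(1+\tau)^{\frac s2}$ gives, for $k=0$, a Duhamel integral of order $(1+t)^{1+s}$, which is slower than $(1+t)^{\frac s2}$. You must keep the divergence/gradient structure: the pressure part has to be recombined as $\nabla(\bar\rho p_1(\bar\rho)\varrho)$ as in \eqref{G4.45}. Paraproducts then give $\|[\,\cdot\,]^L\|_{\dot B^s_{2,\infty}}\lesssim\varepsilon\|\nabla(\varrho,\omega)\|_{L^2}$, as in Lemma~\ref{L4.2}.

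Second, even with that sharp bound, the Duhamel contribution is $C\varepsilon K(1+t)^{-\frac{k-s}{2}}$. This is the same rate as the linear part, and $K$ depends on the data through the constants of Theorem~\ref{T1.2}, via $\|U_0\|_{\dot B^s_{2,\infty}}$ and $\exp(C\|U_0\|_{H^3}^2)$. Since $\varepsilon_1$ is fixed independently of the data, and $\|U_0\|_{\dot B^s_{2,\infty}}/\mathfrak a_*$ is unbounded over admissible data, you cannot guarantee $C\varepsilon K<\frac12 c\,\mathfrak a_*$. Letting $T_1$ or $c$ depend on the data does not help, because both terms decay at the same rate. The same defect sits in your Step~2 at $s=-\frac32$. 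Integrability of $\|\mathcal N(\tau)\|_{L^1}$ only makes $\int_0^{t/2}$ reproduce exactly $(1+t)^{-\frac{k+3/2}{2}}$, with the non-small constant $\int_0^\infty\|\mathcal N\|_{L^1}\,{\rm d}\tau$; there is no gain.

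The missing idea is to compare only at the block $j=j_{m(t)}$ you already select in Step~1, and to show that the error at that block tends to zero as $j\to-\infty$. Then no relation between $\varepsilon$ and $\mathfrak a_*$ is needed. The paper does this with the localized functional $\mathscr E_j$ of \eqref{G4.39} and the reverse inequality \eqref{G5.11}, which integrates to
$2^{2sj}\mathscr E_j(t)\ge e^{-\lambda_5 2^{2j}t}2^{2sj}\mathscr E_j(0)-C\mathcal E_\infty^2\big(r_j+\varepsilon 2^{(-1-s)j}\big)$.
The stationary terms enter with the factor $2^{(1-s)j}$, which comes from $\bar\rho\in\dot B^{s+\frac32}_{2,\infty}$ in the high-coefficient/low-solution paraproduct, see \eqref{G4.52b}. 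After integration against $e^{-\lambda_5 2^{2j}(t-\tau)}$ this becomes $\varepsilon 2^{(-1-s)j}\to0$, because $s<-1$. The quadratic terms give $r_j\to0$. Since $j_{m(t)}\to-\infty$, the block retains $\frac12 e^{-\lambda_5}c\,\mathfrak a_*^2$ for $t\ge T_1$, with $T_1$ data-dependent.

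In your Duhamel language, the fix is to estimate
$2^{sj}\|\dot\Delta_jD(t)\|_{L^2}\le\int_0^t e^{-c2^{2j}(t-\tau)}2^{sj}\|\dot\Delta_j\mathcal N(\tau)\|_{L^2}\,{\rm d}\tau$
at $j=j_{m(t)}$, rather than $\|\nabla^kD(t)\|_{L^2}$. Be aware that at $s=-\frac32$ this vanishing is itself delicate. For $F\in L^1$ one has $2^{-\frac32 j}\|\dot\Delta_jF\|_{L^2}\to c\,|\int F\,{\rm d}x|$, and terms such as $\omega\cdot\nabla\omega$ or $\frac{\bar\rho}{\rho_*}\Delta\omega$ need not have zero mean. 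That endpoint therefore needs an argument beyond letting $j\to-\infty$.
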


\begin{rem}
The decay rate in Theorem~\ref{T1.4} is optimal because the upper and
lower bounds have exactly the same algebraic exponent. The lower bound
is stated for the coupled density-velocity norm, while separate componentwise
lower bounds would require additional non-cancellation conditions on
the acoustic modes.
\end{rem}

At the endpoint $s=-\frac32$, the $L^1$ condition provides a simple
criterion for membership in the decay-character class.

\begin{cor}\label{cor1.5}
Under the assumptions of Corollary~\ref{cor1.3}, assume in addition
that at least one of
$\int_{\mathbb R^3}(\rho_0-\rho_*)(x){\rm d}x$ and
$\int_{\mathbb R^3}u_0(x){\rm d}x$ is nonzero. Then the global
solution $(\varrho,\omega)$ obtained in
Corollary~\ref{cor1.3} satisfies, for all sufficiently large $t$,
\begin{align*}
c(1 + t)^{-\frac{2k + 3}{4}} \leq \|\nabla^k(\rho - \rho_*,\omega)(t) \|_{L^2} \leq C(1 + t)^{-\frac{2k + 3}{4}},
\end{align*}
for $k=0,1$, where $c,C>0$ are independent of $t$.
\end{cor}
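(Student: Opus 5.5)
The plan is to deduce Corollary~\ref{cor1.5} from Theorem~\ref{T1.4} with $s=-\frac32$. By the remark preceding Corollary~\ref{cor1.3}, the hypotheses of Corollary~\ref{cor1.3} already place us under the assumptions of Theorem~\ref{T1.2} at the endpoint $s=-\frac32$. Indeed, $L^1\hookrightarrow\dot B^{-\frac32}_{2,\infty}$ gives \eqref{G1.10}. For the potential, $H^{\frac92+\delta}\hookrightarrow\dot B^{\frac92}_{2,1}$, and $H^{\frac92+\delta}\subset L^2\hookrightarrow\dot B^{0}_{2,\infty}=\dot B^{s+\frac32}_{2,\infty}$, so \eqref{G1.9} holds after shrinking $\varepsilon_2$. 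It therefore only remains to verify \eqref{G1.16}, namely that $f:=(\rho_0-\rho_*,u_0)$ lies in $\dot{\mathfrak B}^{-\frac32}_{2,\infty}$. Once this is done, the two-sided bound \eqref{G1.17} with $-\frac{k-s}{2}=-\frac{2k+3}{4}$ is exactly the claim.

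To check membership, we use that $f\in L^1$. Then $\hat f$ is continuous and $\hat f(0)=\int f\,{\rm d}x=:M\neq0$, at least in one component. Choose $\eta>0$ such that $|\hat f(\xi)|\geq |M|/2$ for $|\xi|\leq \eta$. For $j$ sufficiently negative, say $j\leq j_0$, the support annulus $\{\frac34 2^j\leq|\xi|\leq\frac83 2^j\}$ of $\varphi(2^{-j}\cdot)$ lies in $B_\eta$. Then
\begin{align*}
\|\dot\Delta_jf\|_{L^2}^2=(2\pi)^{-3}\int|\varphi(2^{-j}\xi)|^2|\hat f(\xi)|^2{\rm d}\xi\geq c\,|M|^2\int|\varphi(2^{-j}\xi)|^2{\rm d}\xi=c'|M|^2 2^{3j},
\end{align*}
where $c'>0$ depends only on the fixed nonzero dyadic profile $\varphi$. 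Hence $2^{-\frac32 j}\|\dot\Delta_jf\|_{L^2}\geq \sqrt{c'}\,|M|=:\mathfrak a_*$ for every $j\leq j_0$.

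Taking $j_m=j_0-m$ gives a strictly decreasing sequence with $j_m-j_{m+1}=1$, so the definition \eqref{G1.15} is satisfied with $L_*=1$. Together with $f\in\dot B^{-\frac32}_{2,\infty}$, this shows $f\in\dot{\mathfrak B}^{-\frac32}_{2,\infty}$. For vector-valued $f$ the product norm dominates the norm of the component with nonzero mean, so the argument applies unchanged. Theorem~\ref{T1.4} then yields the conclusion for all $t\geq T_1$.

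The only delicate point is the lower bound on $\|\dot\Delta_j f\|_{L^2}$, which rests on continuity of $\hat f$ at the origin and the scaling of the dyadic annulus. Everything else is bookkeeping of embeddings.
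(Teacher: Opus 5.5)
Your proposal is correct and follows the paper's proof. You apply Theorem~\ref{T1.4} with $s=-\frac32$, and you verify \eqref{G1.16} from the continuity of $\hat f$ at the origin, which gives $2^{-\frac32 j}\|\dot\Delta_j f\|_{L^2}\geq c|M|$ for all sufficiently negative $j$. You also spell out the choice $j_m=j_0-m$ with $L_*=1$ and the embeddings that place you under Theorem~\ref{T1.2}, both of which the paper leaves implicit.

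One notational fix: in the paper $\varphi$ is the ball cutoff, so the dyadic symbol in your Plancherel computation should be $\varsigma(2^{-j}\xi)$, supported in $\{2^{j-1}\leq|\xi|\leq 2^{j+1}\}$. The scaling $\int|\varsigma(2^{-j}\xi)|^2\,\mathrm d\xi=2^{3j}\|\varsigma\|_{L^2}^2$ then goes through unchanged.
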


\begin{rem}
The endpoint decay rates in Corollary~\ref{cor1.5} are comparable with the
$k=0,1$ part of Gao, Li and Yao \cite{GLY-2023-JDE}, but neither set
of hypotheses contains the other. Their result is formulated in a
negative Sobolev framework, assumes polynomially weighted control of
the potential, and gives upper and lower bounds for all derivative orders.
Here the potential assumptions are unweighted, the lower bound is for
the coupled norm, and only the orders $k=0,1$ are asserted.
\end{rem}

\subsection{Strategy of the proofs}
We outline the main mechanisms and the points at which the unweighted
assumptions enter.

\medskip
\noindent\textit{Step 1: global existence with an unrestricted $L^2$
component.}
We combine the endpoint estimates in
$\dot H^{\frac12-\delta}$ and $\dot H^3$ with the full $H^3$ energy.
The bootstrap bounds yield
\begin{align*}
 \frac{{\rm d}}{{\rm d}t}\mathcal E_3(t)
 +\lambda_3\mathcal D_3(t)\leq0.
\end{align*}
The zero-order $L^2$ component remains in $\mathcal E_3$ and never
appears as a small coefficient.

\medskip
\noindent\textit{Step 2: optimal upper decay bounds.}
The Shizuta--Kawashima structure
\cite{KMN-CMP-1979,SK-HMJ-1985,XK-ARMA-2015} gives the multiplier
$|\xi|^2/(1+|\xi|^2)$, hence heat-like decay at low frequencies and
exponential damping at high frequencies. Localized estimates propagate
the negative Besov norm, interpolation yields the zeroth-order rate,
and a low-frequency Duhamel estimate together with a high-frequency
Lyapunov inequality gives the first-order rate. The obstruction in
Remark~\ref{R1.2} limits the argument to $k=0,1$.

\medskip
\noindent\textit{Step 3: matching lower bounds.}
A reverse localized energy inequality preserves the initial size of a
sufficiently low dyadic block up to errors that vanish as
$j\to-\infty$. The decay-character condition provides such blocks
with bounded gaps, and the choice $2^{2j}(1+t)\sim1$ yields the
lower rate matching the upper one, see
\cite{Schonbek-JAMS-1991,Bl-2016-SIMA}.

\subsection{Organization of the paper}
The rest of the paper is organized as follows.
Section~\ref{S2} introduces the notation, the low-high frequency
decomposition, and the Sobolev--Besov product estimates used
throughout this paper. Section~\ref{S3} establishes the homogeneous endpoint
bounds and the global $H^3$ energy estimate, thereby proving
Theorem~\ref{T1.1}. Section~\ref{S4} develops the linear spectral estimate,
propagates the negative Besov norm, and combines low-frequency
Duhamel estimates with a high-frequency Lyapunov inequality to prove
Theorem~\ref{T1.2}. Section~\ref{S5} derives the localized dyadic
energy inequalities and proves Theorem~\ref{T1.4} and
Corollary~\ref{cor1.5}.

\section{Preliminaries}\label{S2}

\subsection{Notation}
Throughout this paper, $A\lesssim_{\alpha} B$ means that there exists a positive constant $C$, depending only on $\alpha$, such that $A\leq CB$. We write $A\sim_{\alpha} B$ when both $A\lesssim_{\alpha} B$ and $B\lesssim_{\alpha} A$ hold. For two operators $A$ and $B$, their commutator is defined by $[A,B]=AB - BA$. We denote by $\mathcal{S}$ the Schwartz class on $\mathbb{R}^3$ and by $\mathcal{S}^{\prime}$ the space of tempered distributions on $\mathbb{R}^3$. For a Banach space $X$, we use the shorthand $\|(a,b)\|_{X}:=\|a\|_{X}+\|b\|_{X}$ for $a,b\in X$.

We use $\langle\cdot, \cdot\rangle$ to denote the standard $L^2$ inner product on $\mathbb{R}^3$, namely, \begin{align*}\langle f,g \rangle=\int_{\mathbb{R}^3} f(x)g(x) \mathrm{d}x,\end{align*} for $f,g\in L^2(\mathbb{R}^3)$. We use $(f|g):=f\cdot \bar g$ to denote the dot product of $f$ with the complex conjugate of $g$.
For an integrable function $f:\mathbb{R}^3\rightarrow \mathbb{R}$, we define its Fourier transform by
\begin{align*}
\mathcal{F}(f)(\xi)=\hat{f}(\xi)=\int_{\mathbb{R}^{3}}e^{-ix\cdot\xi}f(x)\mathrm{d}x,
\end{align*}
where $i = \sqrt{-1}\in\mathbb{C}$ and $x\cdot\xi=\sum_{j = 1}^{3}x_{j}\xi_{j}$ for any $\xi\in\mathbb{R}^{3}$.

Let $s\in \mathbb{R}$. The homogeneous Sobolev space $\dot H^s=\dot H^s(\mathbb{R}^3)$ consists of tempered distributions $f$ on $\mathbb{R}^3$ such that $\hat{f}\in L^1_{\rm loc}$ and $\|f\|_{\dot H^s}:=\||\cdot|^s\hat{f}\|_{L^2}<\infty$. The inhomogeneous Sobolev space $H^s = H^s(\mathbb{R}^3)$ consists of tempered distributions $f$ on $\mathbb{R}^3$ such that $\|f\|_{H^s}:=\|(1 + |\cdot|)^s \hat{f}\|_{L^2}<\infty$. For $1\leq p \leq \infty$, the norm on $L^p(\mathbb{R}^3)$ is denoted by $\|\cdot\|_{L^p}$. 
For multi-indices $\alpha=(\alpha_1,\alpha_2,\alpha_3)$ and $\beta=(\beta_1,\beta_2,\beta_3)$, we write \begin{align*}\partial_x^\alpha=\partial^{\alpha_1}_{x_1}\partial^{\alpha_2}_{x_2} \partial^{\alpha_3}_{x_3},\quad |\alpha|=\sum_{j = 1}^3\alpha_j,
\end{align*} 
and  $\beta\leq \alpha$ if $\beta_j\leq \alpha_j$ for $j=1,2,3$.
Moreover, $C_{\beta}^\alpha=\binom{\alpha}{\beta}=\Pi_{j=1}^3\binom{\alpha_j}{\beta_j}$ when $\beta\leq \alpha$.

We next decompose functions into low- and high-frequency components. For $f\in L^2(\mathbb{R}^3)$, define
\begin{align}\label{G2.1}
f^{L}(x) := \Phi_0(D_x)f(x), \quad f^{H}(x) := \Phi_1(D_x)f(x),
\end{align}
where $\Phi_0(D_x)$ and $\Phi_1(D_x)$ are pseudodifferential operators with symbols $\Phi_0(\xi)$ and $\Phi_1(\xi)$, respectively. Here $D_x = \frac{1}{\sqrt{-1}}(\partial_{x_1}, \partial_{x_2}, \partial_{x_3})$, and $\Phi_0$ and $\Phi_1$ are smooth cutoff functions, with $\Phi_0$ satisfying \begin{equation}\label{G2.2}
\Phi_0(\xi) =
\begin{cases}
1, & |\xi| < \frac{r_0}{2}, \\
0, & |\xi| > r_0,
\end{cases}
\end{equation}
for some constant $r_0>0$, and $\Phi_1(\xi):=1-\Phi_0(\xi)$.
It follows from \eqref{G2.1} and \eqref{G2.2} that \begin{align*}
f(x) = f ^{L}(x) + f^{H}(x).
\end{align*}
Moreover, for any $f \in H^3(\mathbb{R}^3)$, Plancherel's theorem yields the following Bernstein-type inequalities, namely for $0<m\leq 3$,
\begin{align} \label{G2.3}
\|f^H\|_{L^2 } \lesssim \|\nabla^m f\|_{L^2},
\end{align}
and for $0\leq n<3$,
\begin{align*}
\|\nabla^3f^L\|_{L^2} \lesssim&\,  \|\nabla^n f^L\|_{L^2 }.
\end{align*}

Finally, we recall the definition of homogeneous Besov spaces. Let $\varphi\in C_0^{\infty}(\mathbb{R}^3)$ be a smooth function such that $\varphi(\xi)=1$ when $|\xi|\leq 1$ and $\varphi(\xi)=0$ when $|\xi|\geq 2$. Define $\varsigma(\xi)=\varphi(\xi)-\varphi(2\xi)$ and $\varsigma_k(\xi)=\varsigma(2^{-k}\xi)$ for $k\in \mathbb{Z}$. By construction, we have $\sum\limits_{k\in\mathbb{Z}}\varsigma_k(\xi)=1$ for all $\xi\neq 0$. Set $\dot{\Delta}_k f:=\mathcal{F}^{-1}(\varsigma_k)*f$, where $\mathcal{F}^{-1}$ denotes the inverse Fourier transform, and define
\begin{align*}
 \dot S_k f:=\sum_{\ell<k}\dot\Delta_\ell f,
 \qquad
 \widetilde{\dot\Delta}_k f
 :=\dot\Delta_{k-1}f+\dot\Delta_kf+\dot\Delta_{k+1}f.
\end{align*}
We use the standard homogeneous realization
\begin{align*}
 \mathcal S'_h(\mathbb R^3)
 :=\left\{f\in\mathcal S'(\mathbb R^3):
 \lim_{k\to-\infty}\dot S_kf=0\ \text{in }\mathcal S'\right\}.
\end{align*}
The Littlewood--Paley decomposition is then given by
$$f = \sum_{k\in\mathbb{Z}}\dot{\Delta}_{k}f$$
in $\mathcal S'_h(\mathbb R^3)$. For $s\in\mathbb{R}$ and $1\leq p,r\leq \infty$, the homogeneous Besov space $\dot{B}^s_{p,r}(\mathbb{R}^3)$ is defined by
$$\dot B^{s}_{p,r}(\mathbb{R}^3) := 
\left\{ 
f \in \mathcal{S}'_h(\mathbb{R}^3) \;\middle|\; 
\|f\|_{\dot B^{s}_{p,r}} < \infty 
\right\}$$
equipped with the norm, for $1\leq r<\infty$, \begin{align*}\|f\|_{\dot{B}^s_{p,r}}:=\left(\sum_{k\in\mathbb{Z}}2^{rsk}\|\dot{\Delta}_k f\|_{L^p}^r \right)^{\frac{1}{r}}, \end{align*} and for $r = \infty$, 
\begin{align*}
\|f\|_{\dot{B}^s_{p,\infty}}:=\sup_{k\in\mathbb{Z}}2^{sk}\|\dot{\Delta}_k f\|_{L^p}. 
\end{align*}
For later use, Bony's homogeneous decomposition is written as
\begin{align*}
 fg=T_fg+T_gf+R(f,g),
\end{align*}
where
\begin{align*}
 T_fg:=\sum_{k\in\mathbb Z}\dot S_{k-1}f\,\dot\Delta_kg,
 \qquad
 R(f,g):=\sum_{k\in\mathbb Z}\dot\Delta_kf\,
 \widetilde{\dot\Delta}_kg.
\end{align*}
For more details on Besov spaces, see \cite{BCD-Book-2011}.

\subsection{Useful lemmas}
We now collect several lemmas and estimates that will be used repeatedly below.
We begin with several standard properties of homogeneous Besov spaces.
\begin{lem}\label{L2.2}{\rm(\!\!\cite[Chapter 2]{BCD-Book-2011})}
The following properties hold:
\begin{itemize}
\item For any nonnegative integer $k$ and $s\in\mathbb{R}$, the following norm equivalence holds: $\|\nabla^k f\|_{\dot B_{p,r}^s}\sim \|f\|_{\dot B_{p,r}^{s + k}}$. In particular, when $p=r = 2$, we have $\|f\|_{\dot B_{2,2}^{k}}\sim \|f\|_{\dot H^k}$.
\item For $s\in\mathbb{R}$ and $p_1,p_2,r_1,r_2$ satisfying $1\leq p_{1}\leq p_{2}\leq\infty$ and $1\leq r_{1}\leq r_{2}\leq\infty$, one has the continuous embedding
\begin{align*}
\dot{B}^{s}_{p_{1},r_{1}}\hookrightarrow \dot{B}^{s-3 (\frac{1}{p_1}-\frac{1}{p_2})}_{p_{2},r_{2}}.
\end{align*}
\item If $1\leq p\leq q\leq\infty$, then
\begin{align*}
\dot{B}^{0}_{p,1}\hookrightarrow L^{p}\hookrightarrow \dot{B}^{0}_{p,\infty}\hookrightarrow \dot B_{q,\infty}^{\varsigma},\quad \text{where }\varsigma=-3\Big(\frac{1}{p}-\frac{1}{q}\Big).
\end{align*}
\item If $p < \infty$, then $\dot{B}^{\frac{3}{p}}_{p,1}$ is continuously embedded in the space of continuous functions that vanish at infinity.
\item For $1\leq p,r\leq\infty$, $s_1<s_2$, and $\theta\in(0,1)$, the following real interpolation estimate holds:
\begin{align*}
\|f\|_{\dot B_{p,r}^{\theta s_1+(1 - \theta)s_2}}\lesssim \frac{1}{\theta(1 - \theta)(s_2 - s_1)}\|f\|_{\dot B_{p,\infty}^{s_1}}^{\theta} \|f\|_{\dot B_{p,\infty}^{s_2}}^{1 - \theta}.
\end{align*}

\item{}
For $\sigma\in \mathbb{R}$ and $f\in{\mathcal S}^{'}_{h}(\mathbb{R}^3)$, define $\Lambda^{\sigma}f:=(-\Delta )^{\frac{\sigma}{2}}f:=\mathcal{F}^{-1}\big{(} |\xi|^{\sigma}\mathcal{F}(f) \big{)}$. Then $\Lambda^{\sigma}$ is an isomorphism from $\dot{B}^{s}_{p,r}$ to $\dot{B}^{s-\sigma}_{p,r}$.
\item Let $G$ be smooth on a neighborhood of the range of $f$, with $G(0)=0$. If $\sigma>0$, $f\in L^\infty\cap\dot B^\sigma_{2,r}$, and $1\leq r\leq\infty$, then
\begin{align*}
 \|G(f)\|_{\dot B^\sigma_{2,r}}
 \lesssim_{G,\|f\|_{L^\infty}}\|f\|_{\dot B^\sigma_{2,r}}.
\end{align*}
At the endpoint $\sigma=0$, if
$f\in\dot B^0_{2,\infty}\cap\dot B^{\frac32}_{2,1}$, then
\begin{align*}
 \|G(f)\|_{\dot B^0_{2,\infty}}
 \lesssim_{G,\|f\|_{L^\infty}}
 \|f\|_{\dot B^0_{2,\infty}}.
\end{align*}
\item{} Let $1\leq p_{1},p_{2},r_{1},r_{2}\leq \infty$, $s_{1}\in\mathbb{R}$ and $s_{2}\in\mathbb{R}$ satisfy
\begin{align*}
 s_{2}<\frac{3}{p_2}\quad\text{or}\quad s_{2}=\frac{3}{p_2}~\text{and}~r_{2}=1.
\end{align*}
Then the space $\dot{B}^{s_{1}}_{p_{1},r_{1}}\cap \dot{B}^{s_{2}}_{p_{2},r_{2}}$, endowed with the norm $\|\cdot \|_{\dot{B}^{s_{1}}_{p_{1},r_{1}}}+\|\cdot\|_{\dot{B}^{s_{2}}_{p_{2},r_{2}}}$, is a Banach space and enjoys the weak compactness and Fatou properties: if $f_{n}$ is a uniformly bounded sequence in $\dot{B}^{s_{1}}_{p_{1},r_{1}}\cap \dot{B}^{s_{2}}_{p_{2},r_{2}}$, then there exist an element $f$ of $\dot{B}^{s_{1}}_{p_{1},r_{1}}\!\cap \dot{B}^{s_{2}}_{p_{2},r_{2}}$ and a subsequence $f_{n_{k}}$ such that $f_{n_{k}}\rightarrow f$ in $\mathcal{S}'$ and
    \begin{align*}
    \begin{aligned}
    \|f\|_{\dot{B}^{s_{1}}_{p_{1},r_{1}}\cap \dot{B}^{s_{2}}_{p_{2},r_{2}}}\lesssim \liminf_{k\rightarrow \infty} \|f_{n_{k}}\|_{\dot{B}^{s_{1}}_{p_{1},r_{1}}\cap \dot{B}^{s_{2}}_{p_{2},r_{2}}}.
    \end{aligned}
    \end{align*}
\end{itemize}
\end{lem}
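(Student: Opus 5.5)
The statement just above is Lemma~\ref{L2.2}, a collection of standard facts about homogeneous Besov spaces taken from \cite[Chapter 2]{BCD-Book-2011}. I would not reprove them from scratch. The plan is to indicate for each item the short Littlewood--Paley argument behind it, and to cite the reference for the technical points.

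\emph{Scaling, embeddings and interpolation.} The norm equivalence $\|\nabla^kf\|_{\dot B^s_{p,r}}\sim\|f\|_{\dot B^{s+k}_{p,r}}$ and the isomorphism property of $\Lambda^\sigma$ both follow from one Bernstein-type fact. For a Fourier multiplier $m$ that is smooth and homogeneous of degree $\sigma$ away from the origin, one has $\|m(D)\dot\Delta_kf\|_{L^p}\sim2^{k\sigma}\|\dot\Delta_kf\|_{L^p}$, uniformly in $k$, because the annulus is rescaled. For the reverse direction with $\nabla^k$, I would write $f$ as a sum of Riesz-type multipliers applied to $\nabla^kf$ on each block.

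The Besov embedding comes from the blockwise Bernstein bound $\|\dot\Delta_kf\|_{L^{p_2}}\lesssim2^{3k(1/p_1-1/p_2)}\|\dot\Delta_kf\|_{L^{p_1}}$, combined with $\ell^{r_1}\subset\ell^{r_2}$. The chain $\dot B^0_{p,1}\hookrightarrow L^p\hookrightarrow\dot B^0_{p,\infty}$ is the triangle inequality together with the uniform $L^p$-boundedness of $\dot\Delta_k$. The embedding of $\dot B^{3/p}_{p,1}$ into continuous functions vanishing at infinity follows from $\|\dot\Delta_kf\|_{L^\infty}\lesssim2^{3k/p}\|\dot\Delta_kf\|_{L^p}$: the series then converges absolutely in $L^\infty$, and each block is continuous and vanishes at infinity.

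For the interpolation inequality, I would split the dyadic sum at an integer $N$. For $k\le N$ I use the $s_1$-norm and for $k>N$ the $s_2$-norm. Summing the two geometric series gives factors $(1-2^{-(1-\theta)(s_2-s_1)})^{-1}$ and $(1-2^{-\theta(s_2-s_1)})^{-1}$, which produce the stated constant. Optimizing $N$ then gives the product form.

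\emph{Composition estimate.} This is the only item requiring real work. For $\sigma>0$ I would invoke Meyer's first-linearization argument \cite[Thm.~2.61]{BCD-Book-2011}. At the endpoint $\sigma=0$, I would write $G(f)=G'(0)f+\widetilde G(f)f$ with $\widetilde G(0)=0$, and estimate the product through Bony's decomposition. The point is that the paraproduct and remainder map $L^\infty\times\dot B^0_{2,\infty}$ (with the extra $\dot B^{3/2}_{2,1}$ control on $\widetilde G(f)$) into $\dot B^0_{2,\infty}$. I expect this endpoint to be the main obstacle, since the remainder $R(\cdot,\cdot)$ with total regularity $0$ is borderline.

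\emph{Compactness and Fatou properties.} The condition on $s_2$ guarantees that the intersection space lies in $\mathcal S'_h$ and is complete. Given a bounded sequence, I would extract a subsequence by a diagonal argument so that every block $\dot\Delta_kf_n$ converges weakly, hence in $\mathcal S'$. Lower semicontinuity of each block norm, taken with $\liminf$ over finite partial sums, gives the Fatou inequality. These arguments are exactly those of \cite[Chapter 2]{BCD-Book-2011}, to which I would refer.
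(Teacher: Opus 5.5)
The paper gives no proof of this lemma and only cites \cite[Chapter 2]{BCD-Book-2011}. Your Littlewood--Paley sketches for the scaling, embedding and $C_0$ items are the standard arguments behind that citation. The genuine gap is the endpoint $\sigma=0$ of the composition item. Your splitting $G(f)=G'(0)f+\widetilde G(f)f$ uses $\|ab\|_{\dot B^0_{2,\infty}}\lesssim\|a\|_{\dot B^{3/2}_{2,1}}\|b\|_{\dot B^0_{2,\infty}}$ (Lemma~\ref{L2.1} with $s_1=\frac32$, $s_2=0$) and the $\dot B^{3/2}_{2,1}$ composition bound for $\widetilde G(f)$. What it actually gives is
\[
\|G(f)\|_{\dot B^0_{2,\infty}}\le\big(|G'(0)|+C(G,\|f\|_{L^\infty})\|f\|_{\dot B^{3/2}_{2,1}}\big)\|f\|_{\dot B^0_{2,\infty}},
\]
so the constant depends on $\|f\|_{\dot B^{3/2}_{2,1}}$, not only on $G$ and $\|f\|_{L^\infty}$ as claimed. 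Once that norm is used the remainder is not borderline, since the index sum is $\frac32>0$; it fails to sum only if you have just $\|\widetilde G(f)\|_{L^\infty}$.

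This extra dependence cannot be removed, so the item is false as written. Take $G(z)=z^2$ and $f_N=\sum_{k=1}^N\chi(x-x_k)\cos(10\cdot 2^kx_1)$, with $\chi\in C_c^\infty$ fixed and disjoint translated supports inside a ball of radius $D\sim N^{1/3}$. Then $\|f_N\|_{L^\infty}\le\|\chi\|_{L^\infty}$. Also $\|f_N\|_{\dot B^0_{2,\infty}}\lesssim1$ uniformly in $N$, because each dyadic block sees $O(1)$ pieces up to rapidly decaying tails. On the other hand, $f_N^2=\frac12\sum_k\chi^2(\cdot-x_k)+\frac12\sum_k\chi^2(\cdot-x_k)\cos(20\cdot 2^kx_1)$. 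The second sum has $O(1)$ low-frequency blocks. The Fourier transform of the first sum is $\gtrsim N$ for $|\xi|\lesssim D^{-1}$, so its block at $2^j\sim D^{-1}$ has $L^2$ norm $\gtrsim N2^{3j/2}\sim N^{1/2}$. You should therefore state and prove the corrected estimate displayed above. It is all the paper needs, since the endpoint is only used at $s=-\frac32$ with $\|\phi\|_{\dot B^{3/2}_{2,1}}\lesssim\varepsilon$.

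Three smaller discrepancies should also be fixed.

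\emph{Interpolation constant.} Since $(1-2^{-x})^{-1}\le1+(x\ln2)^{-1}$, your two geometric factors give $C\big(1+\frac{1}{\theta(1-\theta)(s_2-s_1)}\big)$, not the stated constant. The stated constant is in fact false when $\theta(1-\theta)(s_2-s_1)$ is large; test it on a function with Fourier support in a thin shell around $|\xi|=1$. This is harmless for the bounded index ranges used in the paper.

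\emph{Hypotheses ensuring $\mathcal S'_h$.} The homogeneous composition theorem you cite for $\sigma>0$ needs $\sigma<\frac32$, or $\sigma=\frac32$ with $r=1$, or an additional low-regularity norm of that type on $f$. This is what guarantees $G(f)\in\mathcal S'_h$. The paper always has such a norm available, e.g.\ $\dot B^{1/2}_{2,1}$ for $\phi$, but your sketch should record it. Similarly, $L^p\hookrightarrow\dot B^0_{p,\infty}$ requires $p<\infty$, since constants are not in $\mathcal S'_h$.

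\emph{Fatou property.} Blockwise convergence alone does not give $f_{n_k}\to f$ in $\mathcal S'$ with $f\in\mathcal S'_h$. The hypothesis on $s_2$ enters through the bound $|\langle\dot\Delta_kf_n,\psi\rangle|\lesssim_\psi 2^{k(\frac3{p_2}-s_2)}\,2^{ks_2}\|\dot\Delta_kf_n\|_{L^{p_2}}$ for $k\le0$ and $\psi\in\mathcal S$. This makes the low-frequency tail $\sum_{k<0}$ converge uniformly in $n$, and that is the step you need to write down.
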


The following lemma establishes bilinear product estimates in homogeneous Besov spaces.
\begin{lem}[{\!\!\cite[Theorems 2.47 and 2.52]{BCD-Book-2011}}]\label{L2.1}
Let $s_1, s_2\in \mathbb{R}$ satisfy $s_1,s_2<\frac{3}{2}$ and $s_1 + s_2>0$, and let $1\leq r_1,r_2\leq\infty$ satisfy $\frac{1}{r_1}+\frac{1}{r_2}=\frac{1}{r}$. Then
\begin{align*}
\|fg\|_{\dot B_{2,r}^{s_1+s_2-\frac{3}{2}}}\lesssim_{s_1,s_2}\|f\|_{\dot B_{2,r_1}^{s_1}}\|g\|_{\dot B_{2,r_2}^{s_2}}.
\end{align*}
If $s_1\leq \frac{3}{2}$, $s_2<\frac{3}{2}$, and $s_1 + s_2\geq 0$, then
\begin{align*}
\|fg\|_{\dot B_{2,\infty}^{s_1+s_2-\frac{3}{2}}}\lesssim \|f\|_{\dot B_{2,1}^{s_1}}\|g\|_{\dot B_{2,\infty}^{s_2}}.
\end{align*}
\end{lem}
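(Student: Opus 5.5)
This lemma is quoted from \cite[Theorems 2.47 and 2.52]{BCD-Book-2011}. I would nonetheless check it directly through Bony's decomposition $fg=T_fg+T_gf+R(f,g)$, estimating each piece dyadically. The tools are Bernstein's inequality and the almost-orthogonality of the Littlewood--Paley blocks: $\dot\Delta_k(T_fg)$ only involves $\dot\Delta_\ell g$ with $|\ell-k|\le 4$. The target regularity is $s_1+s_2-\frac32$.

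\emph{Paraproduct $T_fg$.} Bernstein gives
\begin{align*}
\|\dot S_{\ell-1}f\|_{L^\infty}\lesssim\sum_{m\le \ell-2}2^{\frac32 m}\|\dot\Delta_m f\|_{L^2}=\sum_{m\le\ell-2}2^{(\frac32-s_1)m}\,2^{s_1m}\|\dot\Delta_mf\|_{L^2}.
\end{align*}
Since $s_1<\frac32$, this is bounded by $2^{(\frac32-s_1)\ell}c_\ell\|f\|_{\dot B^{s_1}_{2,r_1}}$, where $(c_\ell)\in\ell^{r_1}$; this is Young's convolution inequality with a geometric kernel. Multiplying by $2^{s_2\ell}\|\dot\Delta_\ell g\|_{L^2}$ and applying Hölder in $\ell$ with $\frac1{r_1}+\frac1{r_2}=\frac1r$ gives the bound in $\dot B^{s_1+s_2-\frac32}_{2,r}$.

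\emph{Paraproduct $T_gf$.} This is symmetric and uses $s_2<\frac32$.

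\emph{Remainder $R(f,g)$.} Each term $\dot\Delta_\ell f\,\widetilde{\dot\Delta}_\ell g$ has Fourier support in a ball of radius $\sim2^\ell$. I would estimate it in $L^1$ by Cauchy--Schwarz and then use Bernstein $L^1\to L^2$ on $\dot\Delta_k$, which costs $2^{\frac32k}$. This gives
\begin{align*}
2^{(s_1+s_2-\frac32)k}\|\dot\Delta_kR\|_{L^2}\lesssim\sum_{\ell\ge k-3}2^{(s_1+s_2)(k-\ell)}\,2^{s_1\ell}\|\dot\Delta_\ell f\|_{L^2}\,2^{s_2\ell}\|\widetilde{\dot\Delta}_\ell g\|_{L^2}.
\end{align*}
When $s_1+s_2>0$ the kernel $2^{(s_1+s_2)(k-\ell)}\mathbf 1_{\ell\ge k-3}$ is summable, so Young and Hölder give an $\ell^r$ bound.

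\emph{Endpoint statement.} Here $f\in\dot B^{s_1}_{2,1}$, $g\in\dot B^{s_2}_{2,\infty}$ and the output norm is $\ell^\infty$. In $T_fg$ with $s_1=\frac32$, the sum defining $\|\dot S_{\ell-1}f\|_{L^\infty}$ is still bounded by $\|f\|_{\dot B^{3/2}_{2,1}}$ because of the $\ell^1$ summability. $T_gf$ uses $s_2<\frac32$ exactly as before. In the remainder with $s_1+s_2=0$, the kernel is no longer decaying, so I would bound the sum over $\ell$ by $\sum_\ell 2^{s_1\ell}\|\dot\Delta_\ell f\|_{L^2}\sup_\ell 2^{s_2\ell}\|\widetilde{\dot\Delta}_\ell g\|_{L^2}$, using the $\ell^1$ control of $f$.

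\emph{Main obstacle.} The only delicate point is the two endpoint cases, $s_1=\frac32$ and $s_1+s_2=0$. Both are handled by putting the $\ell^1$ summability on $f$ rather than using geometric decay. Convergence of the Bony series in $\mathcal S'_h$ follows from the same bounds, so the argument reduces to the cited theorems.
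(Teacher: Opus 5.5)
Your proposal is correct and is essentially the standard argument behind the cited results. The paper gives no proof of its own, only the reference to Theorems 2.47 and 2.52 of \cite{BCD-Book-2011}. Those theorems proceed exactly as you do: Bony's decomposition; Bernstein plus geometric (Young) summation for $T_fg$ and $T_gf$, using $s_1,s_2<\frac32$; an $L^1$/Bernstein bound for $R(f,g)$, using $s_1+s_2>0$; and $\ell^1$-summability of $f$ at the endpoints $s_1=\frac32$ and $s_1+s_2=0$.

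One small caveat concerns your localization claim that $\dot\Delta_k(T_fg)$ only involves $\dot\Delta_\ell g$ with $|\ell-k|\le4$. This holds with the dyadic cutoffs of \cite{BCD-Book-2011}, or with $\dot S_{\ell-2}$ in place of $\dot S_{\ell-1}$. It does not hold literally with the paper's normalization ($\varphi\equiv1$ on $\{|\xi|\le1\}$, $\varphi\equiv0$ on $\{|\xi|\ge2\}$), because then $\dot S_{\ell-1}f\,\dot\Delta_\ell g$ is not Fourier-supported away from the origin. Since homogeneous Besov norms do not depend on the choice of partition, this is harmless.
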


We next record product estimates in homogeneous Sobolev spaces in Lemmas \ref{L2.3}--\ref{L2.4} and in inhomogeneous Sobolev spaces in Lemma \ref{L2.5}.
\begin{lem}[{\!\!\cite[Proposition A.29]{BV-2022}}]\label{L2.3}
Let $f,g\in\dot H^{s}(\mathbb{R}^3)$ for $s\in(0,1)$. Then
\begin{align*}
\|fg\|_{\dot H^s}\lesssim\|f\|_{\dot  H^s}\|g\|_{L^\infty}+\|f\|_{L^\infty}\|g\|_{\dot H^s}.
\end{align*}
\end{lem}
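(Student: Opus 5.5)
The estimate in Lemma~\ref{L2.3} is the fractional Leibniz rule for $s\in(0,1)$. I would prove it through the Gagliardo (difference-quotient) form of the $\dot H^s$ seminorm. For $s\in(0,1)$ and $f\in\dot H^s(\mathbb R^3)$, Plancherel gives
\begin{align*}
\|f\|_{\dot H^s}^2 = c_s\int_{\mathbb R^3}\!\int_{\mathbb R^3}\frac{|f(x+h)-f(x)|^2}{|h|^{3+2s}}\,{\rm d}x\,{\rm d}h,
\end{align*}
because $\int_{\mathbb R^3}|e^{ih\cdot\xi}-1|^2|h|^{-3-2s}\,{\rm d}h = c_s^{-1}|\xi|^{2s}$. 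This integral is finite precisely when $0<s<1$: the integrand behaves like $|h|^{2-3-2s}$ near $h=0$ and like $|h|^{-3-2s}$ at infinity, and scaling fixes the power $|\xi|^{2s}$.

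With this identity the product estimate becomes pointwise. I would use the decomposition
\begin{align*}
(fg)(x+h)-(fg)(x) = \big(f(x+h)-f(x)\big)g(x+h) + f(x)\big(g(x+h)-g(x)\big),
\end{align*}
which gives
\begin{align*}
|(fg)(x+h)-(fg)(x)|\le \|g\|_{L^\infty}|f(x+h)-f(x)|+\|f\|_{L^\infty}|g(x+h)-g(x)|.
\end{align*}
I would then square, integrate against $|h|^{-3-2s}\,{\rm d}x\,{\rm d}h$, apply Minkowski's inequality in $L^2({\rm d}x\,{\rm d}h)$, and use the identity once more. This yields the claim, with the implicit constant depending only on $s$. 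If either $L^\infty$ norm is infinite, the inequality is trivial.

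The one point that needs care is that $fg$ belongs to $\mathcal S'_h$, i.e.\ is a genuine element of $\dot H^s$ rather than being defined only modulo polynomials, so that the Gagliardo identity applies to it.
\begin{itemize}
\item For $s<\frac32$, $\dot H^s\hookrightarrow L^{6/(3-2s)}$.
\item Hence, when the right-hand side is finite, $fg\in L^{6/(3-2s)}+L^\infty\cdot L^{6/(3-2s)}\subset L^{6/(3-2s)}$.
\item So $fg$ is a function with $\hat{(fg)}\in L^1_{\rm loc}$ in the distributional sense, and the difference-quotient characterization holds for it. This is a finite-difference Sobolev characterization valid for $L^p$ functions.
\end{itemize}
I expect this identification step to be the only real obstacle. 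The cleanest route is to first prove the inequality for $f,g\in\mathcal S$ and then pass to the limit using the Fatou property in Lemma~\ref{L2.2}. One detail in that approximation: mollification and truncation preserve the $L^\infty$ bounds and converge in $\dot H^s$.
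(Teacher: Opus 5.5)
The paper does not prove Lemma~\ref{L2.3}; it imports the estimate from \cite[Proposition~A.29]{BV-2022}. Your difference-quotient argument is a correct, self-contained alternative. Once the Gagliardo identity is available for $f$, $g$ and $fg$, the pointwise splitting and Minkowski's inequality in $L^2(|h|^{-3-2s}\,\mathrm dx\,\mathrm dh)$ give the bound with constant one, using only Plancherel. The price is that the method is tied to $0<s<1$: for $s\geq1$ the first-difference integral diverges, and higher differences break the clean product splitting. A Littlewood--Paley/paraproduct proof of the fractional Leibniz rule, by contrast, extends to higher orders. Since the lemma only claims $s\in(0,1)$, this limitation costs you nothing.

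One step needs correcting. The fact that $fg\in L^{6/(3-2s)}$ does not by itself give $\widehat{fg}\in L^1_{\rm loc}$, because $6/(3-2s)>2$ and Hausdorff--Young is unavailable. What actually closes your first route is the converse half of the difference characterization. First, $L^p\subset\mathcal S'_h$ for $p<\infty$. Second, on $\mathcal S'_h$ a finite Gagliardo seminorm is equivalent to membership in $\dot B^s_{2,2}=\dot H^s$ (see \cite[Chapter~2]{BCD-Book-2011}). Directly: the finite seminorm forces $\widehat{fg}$ to agree away from $\xi=0$ with a function in $L^2(|\xi|^{2s}\mathrm d\xi)$, and the leftover polynomial vanishes because $fg\in L^{6/(3-2s)}$.

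For your approximation route, spatial truncation does converge in $\dot H^s$, but this needs an argument because $f$ need not lie in $L^2$. It is simpler to take $f_N:=(\dot S_N-\dot S_{-N})f$ and $g_N:=(\dot S_N-\dot S_{-N})g$. These lie in $H^\infty$, converge in $\dot H^s$, and satisfy $\|f_N\|_{L^\infty}\leq C\|f\|_{L^\infty}+C2^{-N(3-2s)/2}\|f\|_{L^{6/(3-2s)}}$ (the second term by Bernstein's inequality, and similarly for $g_N$). Hence $f_Ng_N\in H^1\subset\dot H^s$, so your identity applies to it directly. Finally $f_Ng_N\to fg$ in $L^{6/(3-2s)}$, and the Fatou property in Lemma~\ref{L2.2} concludes.
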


\begin{lem}[{\!\!\cite[Proposition A.30]{BV-2022}}]\label{L2.4}
For $f,g\in \mathcal{S}(\mathbb{R}^3)$, the following estimate holds:
\begin{align*}
\|fg\|_{\dot H^{\frac{1}{2}}}\lesssim \|f\|_{\dot H^1}\|g\|_{\dot H^1}.
\end{align*}
\end{lem}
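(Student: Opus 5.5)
The estimate $\|fg\|_{\dot H^{1/2}}\lesssim\|f\|_{\dot H^1}\|g\|_{\dot H^1}$ is scale-invariant in $\mathbb R^3$: both sides scale like $\lambda^{-1}$ under $f\mapsto f(\lambda\cdot)$. This suggests reducing it to the homogeneous Besov product law of Lemma~\ref{L2.1}. Take $s_1=s_2=1$ and $r_1=r_2=2$, so that $\frac1r=\frac12+\frac12=1$, i.e.\ $r=1$. The hypotheses $s_1,s_2<\frac32$ and $s_1+s_2=2>0$ hold, and Lemma~\ref{L2.1} gives
\begin{align*}
 \|fg\|_{\dot B^{\frac12}_{2,1}}\lesssim\|f\|_{\dot B^1_{2,2}}\|g\|_{\dot B^1_{2,2}}.
\end{align*}
By the first item of Lemma~\ref{L2.2}, $\dot B^1_{2,2}=\dot H^1$ and $\dot B^{1/2}_{2,2}=\dot H^{1/2}$ with equivalent norms. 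By the embedding $\dot B^{1/2}_{2,1}\hookrightarrow\dot B^{1/2}_{2,2}$ (second item of Lemma~\ref{L2.2}, with $p_1=p_2=2$ and $r_1=1\le r_2=2$), the left-hand side dominates $\|fg\|_{\dot H^{1/2}}$. This gives the claim, even with the stronger $\dot B^{1/2}_{2,1}$ norm on the left.

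The only point needing care is that $f,g\in\mathcal S$ must lie in the spaces where these identities are valid. Schwartz functions have $\hat f$ bounded near the origin, so $f\in\mathcal S'_h$ and $f\in\dot H^1$ (since $|\xi|^2|\hat f|^2$ is integrable), and the product $fg$ is again Schwartz. Hence the Littlewood--Paley sums converge and no realization issue arises.

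If one prefers a self-contained argument avoiding Lemma~\ref{L2.1}, I would give a direct route.
\begin{itemize}
\item Use Sobolev embedding $\dot H^1\hookrightarrow L^6$ and the fractional Leibniz rule $\|\Lambda^{1/2}(fg)\|_{L^2}\lesssim\|\Lambda^{1/2}f\|_{L^3}\|g\|_{L^6}+\|f\|_{L^6}\|\Lambda^{1/2}g\|_{L^3}$.
\item Apply $\dot H^1\hookrightarrow\dot W^{1/2,3}$, which follows from the Sobolev embedding $\dot H^{1/2}\hookrightarrow L^3$ applied to $\Lambda^{1/2}f$.
\end{itemize}
The main obstacle in this direct route is justifying the fractional Leibniz (Kato--Ponce) inequality. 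That is why I would rely on the paraproduct-based Lemma~\ref{L2.1}, where Bony's decomposition already handles it: the paraproduct terms use $\dot S_{k-1}f\in L^6$ against $\dot\Delta_kg$, and the remainder uses $s_1+s_2>0$.
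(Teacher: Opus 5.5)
Your argument is correct, but it is not what the paper does: the paper gives no proof of Lemma~\ref{L2.4} and simply imports it from Proposition~A.30 of the cited monograph. Your reduction shows that the lemma is in fact a special case of Lemma~\ref{L2.1}, which the paper already quotes. Taking $s_1=s_2=1<\frac32$, $r_1=r_2=2$ and $r=1$ gives $\|fg\|_{\dot B^{\frac12}_{2,1}}\lesssim\|f\|_{\dot B^1_{2,2}}\|g\|_{\dot B^1_{2,2}}$, and this $\ell^1$ version is genuinely true. The paraproducts land in $\dot B^{\frac12}_{2,1}$ because $\|\dot S_{k-1}f\|_{L^\infty}\lesssim 2^{k/2}\tilde c_k$ with $(\tilde c_k)\in\ell^2$, and Cauchy--Schwarz in $k$ against the $\ell^2$ sequence $2^k\|\dot\Delta_k g\|_{L^2}$ produces an $\ell^1$ sum. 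The remainder lands in $\dot B^{2}_{1,1}\hookrightarrow\dot B^{\frac12}_{2,1}$. So your route buys two things over the citation: it needs no external input beyond the Bahouri--Chemin--Danchin product laws already quoted in the paper, and it has a sharper left-hand side, since $\dot B^{\frac12}_{2,1}\hookrightarrow\dot H^{\frac12}$. Your realization check for Schwartz functions is adequate. The Kato--Ponce alternative is also valid, using $\frac13+\frac16=\frac12$ and $\dot H^{\frac12}\hookrightarrow L^3$. However, it needs the homogeneous fractional Leibniz rule as an external black box, which is no more elementary than the citation itself, so your preference for the paraproduct route is well founded.
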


\begin{lem}[{\!\!\cite{AF-Pa-2003}}]\label{L2.5}
For any $f,g\in H^3(\mathbb{R}^3)$ and any multi-index $\alpha$ with $1\leq|\alpha|\leq3$, the following estimates hold:
\begin{align*}
\|f\|_{L^{\infty} }\lesssim&\,     \|\nabla  f\|_{L^{2} }^{\frac{1}{2}}
\|\nabla ^{2}f\|_{L^{2} }^{\frac{1}{2}}\lesssim \|f\|_{H^2}, \\
\|\partial ^{\alpha}(fg)\|_{L^{2} }
\lesssim&\,  \|\nabla  f\|_{H^{2} }\|\nabla  g\|_{H^{2} },\\
\|f\|_{L^3 }\lesssim&\, \|  f\|_{\dot H^{\frac{1}{2}} },\\
\|f\|_{L^6 }\lesssim&\, \|  f\|_{\dot H^1 },\\
\|f\|_{L^q }\lesssim&\,\|f\|_{H^1 },\quad 2\leq q\leq 6.
\end{align*}
\end{lem}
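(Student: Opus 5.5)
All five inequalities are classical Sobolev and Gagliardo--Nirenberg type bounds. The plan is to obtain each directly from Plancherel, Hölder and the homogeneous Sobolev embedding, using the Littlewood--Paley tools of Lemma~\ref{L2.2} where convenient.

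\emph{$L^\infty$ bound.} I would write $\|f\|_{L^\infty}\leq(2\pi)^{-3}\int_{\mathbb R^3}|\hat f(\xi)|\,{\rm d}\xi$ and split the integral at $|\xi|=R$.
\begin{itemize}
\item On $\{|\xi|<R\}$, write $|\hat f|=|\xi||\hat f|\cdot|\xi|^{-1}$ and apply Cauchy--Schwarz. Since $\int_{|\xi|<R}|\xi|^{-2}{\rm d}\xi\sim R$ in three dimensions, this piece is $\lesssim R^{\frac12}\|\nabla f\|_{L^2}$.
\item On $\{|\xi|>R\}$, write $|\hat f|=|\xi|^2|\hat f|\cdot|\xi|^{-2}$. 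Since $\int_{|\xi|>R}|\xi|^{-4}{\rm d}\xi\sim R^{-1}$, this piece is $\lesssim R^{-\frac12}\|\nabla^2 f\|_{L^2}$.
\end{itemize}
Choosing $R=\|\nabla^2f\|_{L^2}/\|\nabla f\|_{L^2}$ gives $\|f\|_{L^\infty}\lesssim\|\nabla f\|_{L^2}^{\frac12}\|\nabla^2 f\|_{L^2}^{\frac12}$. Young's inequality then yields $\lesssim\|f\|_{H^2}$.

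\emph{Product estimate.} I would expand $\partial^\alpha(fg)=\sum_{\beta\leq\alpha}C^\alpha_\beta\,\partial^\beta f\,\partial^{\alpha-\beta}g$ with $1\leq|\alpha|\leq3$. In every term, the factor carrying at most one derivative goes in $L^\infty$ and the other in $L^2$.
\begin{itemize}
\item If $\beta=0$, bound $\|f\|_{L^\infty}\|\partial^\alpha g\|_{L^2}$. The first step gives $\|f\|_{L^\infty}\lesssim\|\nabla f\|_{H^1}$, and $\|\partial^\alpha g\|_{L^2}\leq\|\nabla g\|_{H^2}$ because $|\alpha|\geq1$.
\item If $|\beta|=1$, bound $\|\partial^\beta f\|_{L^\infty}\|\partial^{\alpha-\beta}g\|_{L^2}$, or the reverse when $\partial^{\alpha-\beta}g$ is the factor with at most one derivative. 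Applying the first step to $\nabla f$ gives $\|\nabla f\|_{L^\infty}\lesssim\|\nabla^2f\|_{L^2}^{\frac12}\|\nabla^3 f\|_{L^2}^{\frac12}$. The remaining factor has between zero and two derivatives; in the zero-derivative case it again goes in $L^\infty$ by the first step.
\item The cases $|\beta|=2,3$ are symmetric to the above.
\end{itemize}
Hence every term is $\lesssim\|\nabla f\|_{H^2}\|\nabla g\|_{H^2}$. No undifferentiated $L^2$ norm ever appears, which is the point of this form of the estimate.

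\emph{Embeddings $\dot H^{\frac12}\hookrightarrow L^3$ and $\dot H^1\hookrightarrow L^6$.} These are the cases $s=\frac12$ and $s=1$ of $\dot H^s\hookrightarrow L^{\frac{6}{3-2s}}$ for $0\le s<\frac32$. I would prove this by writing $f=\Lambda^{-s}(\Lambda^sf)$ and applying the Hardy--Littlewood--Sobolev inequality to the Riesz potential $\Lambda^{-s}$, whose kernel is $c|x|^{s-3}$. Alternatively, one could use $\dot B^{s}_{2,2}\hookrightarrow\dot B^{0}_{q,2}$ from Lemma~\ref{L2.2} together with $\dot B^0_{q,2}\hookrightarrow L^q$ for $2\le q<\infty$. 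That last embedding requires the Littlewood--Paley square-function theorem, which is the only genuinely non-elementary ingredient; I would simply quote it or HLS from \cite{BCD-Book-2011}.

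\emph{Embedding $H^1\hookrightarrow L^q$ for $2\le q\le6$.} This follows by Hölder interpolation, $\|f\|_{L^q}\leq\|f\|_{L^2}^{\theta}\|f\|_{L^6}^{1-\theta}$ with $\frac1q=\frac\theta2+\frac{1-\theta}6$. Combining with the $L^6$ bound gives $\|f\|_{L^q}\lesssim\|f\|_{L^2}+\|\nabla f\|_{L^2}\lesssim\|f\|_{H^1}$.
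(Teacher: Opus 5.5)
Your proposal is correct. The paper gives no proof of this lemma: it quotes it from Adams--Fournier as classical Sobolev embedding theory. Your argument is therefore a genuinely different, self-contained route.

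\begin{itemize}
\item The Fourier splitting at $|\xi|=R$, using $\int_{|\xi|<R}|\xi|^{-2}\,{\rm d}\xi\sim R$, $\int_{|\xi|>R}|\xi|^{-4}\,{\rm d}\xi\sim R^{-1}$ and the optimized $R$, gives the $L^\infty$ bound directly in homogeneous Gagliardo--Nirenberg form. It also shows $\hat f\in L^1$, which justifies the inversion bound you start from.
\item The Leibniz expansion, with the factor carrying at most one derivative placed in $L^\infty$, covers every pair $(|\beta|,|\alpha-\beta|)$ with $1\leq|\alpha|\leq3$.
\item The Hardy--Littlewood--Sobolev argument (or Lemma~\ref{L2.2} plus the square-function embedding $\dot B^0_{q,2}\hookrightarrow L^q$ for $q\geq2$) and the H\"older interpolation for $H^1\hookrightarrow L^q$ are standard and correctly set up.
\end{itemize}

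What your route buys is transparency on the feature the paper relies on: no right-hand side ever involves $\|f\|_{L^2}$. Your argument uses only $\hat f\in L^1_{\rm loc}$ and $\nabla f\in H^2$, so it covers more than the stated hypothesis $f,g\in H^3$. This matters in two ways.
\begin{itemize}
\item In \eqref{G3.76} the lemma is applied with $\bar\rho$. By \eqref{G3.4}, $\bar\rho$ lies in $\dot H^{\frac12}\cap\dot H^4$, but it need not belong to $L^2$, so the lemma as stated does not literally apply there while your proof does.
\item The whole strategy of Section~\ref{S3} is to keep the possibly large $L^2$ norm of $(\varrho,\omega)$ out of the small coefficients.
\end{itemize}

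The bare citation buys brevity. It leaves the reader to check that the quoted embeddings hold in this homogeneous form and for such non-$L^2$ coefficients.
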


Finally, we record a commutator estimate that will be used in energy estimates.
\begin{lem}[{\!\!\cite[Appendix]{commutator1,commutator2}}]\label{L2.6}
Let $f$ and $g$ be Schwartz functions. For $k\geq0$,
\begin{align*}
\|\nabla^{k}(fg) \|_{L^r} \lesssim&\, \|f\|_{L^{r_1} }\|\nabla^{k}g\|_{L^{r_2} }+ \|g\|_{L^{r_3} }\|\nabla^{k}f\|_{L^{r_4}}.
\end{align*}
and for $k\geq1$,
\begin{align*}
\|\nabla^{k}(fg)-f\nabla^k g \|_{L^r} \lesssim&\, \|\nabla f\|_{L^{r_1}}\|\nabla^{k-1}g\|_{L^{r_2}}+ \|g\|_{L^{r_3}}\|\nabla^{k}f\|_{L^{r_4}},
\end{align*}
where $1<r,r_2,r_4<\infty$ and $r_i$ $(1\leq i\leq 4)$ satisfy
\begin{align*}
\frac{1}{r_1}+\frac{1}{r_2}=\frac{1}{r_3}+\frac{1}{r_4}=\frac{1}{r}.
\end{align*}
\end{lem}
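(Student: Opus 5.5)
The plan is to prove both inequalities of Lemma~\ref{L2.6} with the same three tools: the Leibniz rule, Hölder's inequality, and the Gagliardo--Nirenberg interpolation inequality. The exponent bookkeeping is arranged so that each term produced by the Leibniz rule is a geometric mean of the two products on the right-hand side. By density it suffices to work with Schwartz functions, as in the statement.

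\emph{First estimate.} Expand
\begin{align*}
\nabla^k(fg)=\sum_{l=0}^{k}C_{l,k}\,\nabla^l f\,\nabla^{k-l}g .
\end{align*}
The terms $l=0$ and $l=k$ are bounded directly by Hölder's inequality, giving $\|f\|_{L^{r_1}}\|\nabla^k g\|_{L^{r_2}}$ and $\|g\|_{L^{r_3}}\|\nabla^k f\|_{L^{r_4}}$. For $0<l<k$, set $\theta=l/k$ and define $p_l,q_l$ by
\begin{align*}
\frac1{p_l}=\frac{1-\theta}{r_1}+\frac{\theta}{r_4},\qquad
\frac1{q_l}=\frac{\theta}{r_3}+\frac{1-\theta}{r_2},
\end{align*}
so that $\frac1{p_l}+\frac1{q_l}=\frac{1-\theta}{r}+\frac{\theta}{r}=\frac1r$. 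Hölder's inequality gives $\|\nabla^l f\,\nabla^{k-l}g\|_{L^r}\le\|\nabla^l f\|_{L^{p_l}}\|\nabla^{k-l}g\|_{L^{q_l}}$. Gagliardo--Nirenberg then gives
\begin{align*}
\|\nabla^l f\|_{L^{p_l}}\lesssim\|f\|_{L^{r_1}}^{1-\theta}\|\nabla^k f\|_{L^{r_4}}^{\theta},\qquad
\|\nabla^{k-l} g\|_{L^{q_l}}\lesssim\|g\|_{L^{r_3}}^{\theta}\|\nabla^k g\|_{L^{r_2}}^{1-\theta}.
\end{align*}
These choices are consistent with scaling, because $l/k$ and $(k-l)/k$ are exactly the interpolation parameters matching the Lebesgue exponents. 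Multiplying, each middle term is bounded by
\begin{align*}
\bigl(\|f\|_{L^{r_1}}\|\nabla^k g\|_{L^{r_2}}\bigr)^{1-\theta}\bigl(\|g\|_{L^{r_3}}\|\nabla^k f\|_{L^{r_4}}\bigr)^{\theta},
\end{align*}
and Young's inequality $a^{1-\theta}b^\theta\le a+b$ closes the estimate.

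\emph{Commutator estimate.} Subtracting $f\nabla^k g$ removes exactly the $l=0$ term, so the remaining terms are $\nabla^{l-1}(\nabla f)\,\nabla^{k-l}g$ with $1\le l\le k$. These carry $k-1$ derivatives in total, distributed between $F:=\nabla f$ and $g$. The term $l=k$ is $\nabla^k f\,g$ and is handled by Hölder with $(r_4,r_3)$. For $1\le l\le k-1$, I repeat the argument above with $k$ replaced by $k-1$ and $\theta=(l-1)/(k-1)$. Here $F$ is interpolated between $\|F\|_{L^{r_1}}=\|\nabla f\|_{L^{r_1}}$ and $\|\nabla^{k-1}F\|_{L^{r_4}}=\|\nabla^k f\|_{L^{r_4}}$, and $g$ between $\|g\|_{L^{r_3}}$ and $\|\nabla^{k-1}g\|_{L^{r_2}}$. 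The same exponent identity and Young's inequality yield the stated bound. The case $k=1$ is trivial.

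\emph{Main obstacle.} The step needing care is the validity of the Gagliardo--Nirenberg inequality at the endpoints, in particular when $r_1$ or $r_3$ equals $\infty$. The hypotheses $1<r_2,r_4<\infty$ are what make this work. The top-order norms sit in reflexive $L^p$ with $1<p<\infty$, where the inequality $\|\nabla^j u\|_{L^p}\lesssim\|u\|_{L^{q}}^{1-j/m}\|\nabla^m u\|_{L^{s}}^{j/m}$ holds for $q\in[1,\infty]$ and $0<j<m$. This excludes the exceptional case of Nirenberg's theorem, which arises only when the highest-order exponent is $1$ or $\infty$ with $m-j-3/s$ a nonnegative integer. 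Should this route give trouble at some endpoint, the fallback is a paraproduct argument: use the decomposition $fg=T_fg+T_gf+R(f,g)$ recalled in Section~\ref{S2}. Bound $T_f\nabla^k g$ and the commutator $[\nabla^k,T_f]g$ via $\dot S_{j-1}f$ in $L^{r_1}$ (or $\nabla\dot S_{j-1}f$ in $L^{r_1}$ for the commutator), and treat the remaining pieces with the Littlewood--Paley square function in $L^{r_2},L^{r_4}$. This is the standard Kato--Ponce route from \cite{commutator1,commutator2}.
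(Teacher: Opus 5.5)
Your proof is correct, but there is no proof in the paper to follow: the paper only cites this lemma. The hypothesis $1<r,r_2,r_4<\infty$ in the statement is the signature of the Kato--Ponce route, which runs through paraproducts together with Coifman--Meyer and Littlewood--Paley square-function bounds. That is essentially the fallback you sketch at the end.

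Your main argument is different and more elementary. It uses the Leibniz rule, H\"older's inequality, the Gagliardo--Nirenberg inequality at the scale-invariant endpoint $a=j/m$, and the bound $X^{1-\theta}Y^{\theta}\le X+Y$. Because the lemma concerns integer $k$ only, this proves both estimates in the full stated range. It never uses $1<r<\infty$, since H\"older's inequality holds for every $r$. What the paraproduct route buys is fractional operators $\Lambda^{\sigma}$ in place of $\nabla^k$, and that is where the restriction on $r,r_2,r_4$ comes from.

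One correction concerns the justification in your last paragraph, which has Nirenberg's exceptional case backwards. That exception occurs when the top-order exponent $s$ satisfies $1<s<\infty$ and $m-j-3/s$ is a nonnegative integer. It removes only the endpoint $a=1$. So the hypothesis $1<r_2,r_4<\infty$ does not rule it out.

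Your Gagliardo--Nirenberg steps are legitimate for a different reason: you only use $a=j/m$ with $j\geq1$. At that value the theorem has no exception. Its exponent relation also collapses to the dimension-free identity $1/p=a/s+(1-a)/q$, which is exactly how your $p_l$ and $q_l$ are defined. With this justification in place of yours, the argument is complete.
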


\section{Global well-posedness of strong solutions}\label{S3}

In this section, we prove Theorem~\ref{T1.1} by direct energy estimates for the perturbation system \eqref{I3}. We first propagate the two homogeneous endpoint norms $\dot H^{\frac12-\delta}$ and $\dot H^3$. We then combine these estimates with the full $H^3$ energy inequality, in the spirit of \cite[Proposition~5.6]{Deguchi-MA-2024}, without requiring the $L^2$ norm of the initial perturbation to be small.

\subsection{A priori estimates}
Let $(\varrho,\omega)$ be a strong solution of \eqref{I3} on $[0,T]$ satisfying the bootstrap assumption
\begin{align}\label{G3.1}
 \sup_{0\leq t\leq T}
 \|(\varrho,\omega)(t)\|_{\dot H^{\frac12-\delta}\cap\dot H^3}
 \leq \sigma,
\end{align}
where $0<\sigma<1$ will be chosen sufficiently small. By the stationary relation \eqref{A1} and assumption \eqref{G1.7}, we have
\begin{align}\label{G3.2}
 \|\bar\rho\|_{\dot B^{\frac12}_{2,1}\cap\dot B^{\frac92}_{2,1}}
 \lesssim
 \|\phi\|_{\dot B^{\frac12}_{2,1}\cap\dot B^{\frac92}_{2,1}}
 \leq \varepsilon_0. 
\end{align}
Interpolation between the two endpoint bounds in \eqref{G3.2} gives
\begin{align*}
 \|\bar\rho\|_{\dot B^r_{2,1}}\lesssim\varepsilon_0,
 \qquad \frac12\leq r\leq\frac92.
\end{align*}
By Lemma~\ref{L2.2}, we obtain
\begin{align}\label{G3.4}
 &\|\bar\rho\|_{\dot H^{\frac12}\cap\dot H^4}
 +\|\bar\rho\|_{\dot B^{\frac32}_{2,1}\cap\dot B^{\frac52}_{2,1}}
 +\|\bar\rho\|_{W^{3,\infty}}
 \lesssim\varepsilon_0.
\end{align}
Since $\dot H^s(\mathbb R^3)=\dot B^s_{2,2}(\mathbb R^3)\hookrightarrow\dot B^s_{2,\infty}(\mathbb R^3)$, Lemma~\ref{L2.2} also yields, for $\frac12-\delta<r<3$,
\begin{align*}
 \|f\|_{\dot B^r_{2,1}}
 \lesssim
 \|f\|_{\dot H^{\frac12-\delta}}^{\frac{3-r}{\frac52+\delta}}
 \|f\|_{\dot H^3}^{\frac{r-\frac12+\delta}{\frac52+\delta}}.
\end{align*}
Then for $0\leq t\leq T$, it holds that
\begin{align}\label{G3.6}
 \|(\varrho,\omega)\|_{\dot H^{\frac12}}
 +\|(\varrho,\omega)\|_{\dot B^{\frac32}_{2,1}\cap\dot B^{\frac52}_{2,1}}
 \lesssim\sigma.
\end{align}
After choosing $\varepsilon_0+\sigma$ sufficiently small, \eqref{G3.4} and \eqref{G3.6} imply that
\begin{align}\label{G3.7}
 \frac{\rho_\infty}{2}
 \leq \varrho(t,x)+\rho_*(x)
 \leq \frac{3\rho_\infty}{2},
 \qquad (t,x)\in[0,T]\times\mathbb R^3.
\end{align}
By \eqref{G3.7}, the smooth coefficients in \eqref{I4} are uniformly bounded. As in \cite[Lemma~3.1]{DLUY-JDE-2007}, the nonlinearities have the schematic forms
\begin{align*}
 \mathcal N_1\sim &\,
 \partial_i\varrho\,\omega^i
 +\varrho\,\partial_i\omega^i
 +\partial_i\bar\rho\,\omega^i
 +\bar\rho\,\partial_i\omega^i,\\
\nonumber
 \mathcal N_2\sim&\,
 \omega^i\partial_i\omega^j
 +\varrho\,\partial_i\partial_i\omega^j
 +\varrho\,\partial_j\partial_i\omega^i
 +\varrho\,\partial_j\varrho\nonumber\\
 &+\bar\rho\,\partial_i\partial_i\omega^j
 +\bar\rho\,\partial_j\partial_i\omega^i
 +\partial_j\bar\rho\,\varrho
 +\bar\rho\,\partial_j\varrho.\nonumber
\end{align*}
Here $1\leq i,j\leq 3$.

We start with the homogeneous estimate of the endpoint case with regularity exponent $\frac12-\delta$, which provides crucial damping control for the density perturbation $\varrho$.
\begin{lem}[The $\dot H^{\frac12-\delta}$ estimate]
For the strong solution $(\varrho,\omega)$ to the Cauchy problem \eqref{I3}, there exists a constant $\lambda_1>0$ such that
\begin{align}\label{G3.10}
 &\frac{\mathrm d}{\mathrm dt}
 \|(\varrho,\omega)\|_{\dot H^{\frac12-\delta}}^2
 +\lambda_1\|\omega\|_{\dot H^{\frac32-\delta}}^2 \leq
 C(\varepsilon_0+\sigma)
 \big(
 \|\varrho\|_{\dot H^{\frac32-\delta}}^2
 +\|\omega\|_{\dot H^{\frac32-\delta}}^2
 \big),
\end{align}
and
\begin{align}\label{G3.11}
 &\frac{\mathrm d}{\mathrm dt}
 \big\langle\Lambda^{\frac12-\delta}\nabla\varrho,
 \Lambda^{\frac12-\delta}\omega\big\rangle
 +\lambda_1\|\varrho\|_{\dot H^{\frac32-\delta}}^2\nonumber\\
 \leq&\,
 C\|\omega\|_{\dot H^{\frac32-\delta}}^2
 +C\|\omega\|_{\dot H^{\frac52-\delta}}^2
 +C(\varepsilon_0+\sigma)
 \big(
 \|\varrho\|_{\dot H^{\frac32-\delta}}^2
 +\|\omega\|_{\dot H^{\frac32-\delta}}^2
 +\|\varrho\|_{\dot H^3}^2
 +\|\omega\|_{\dot H^4}^2
 \big).
\end{align}
\end{lem}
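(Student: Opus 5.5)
Both inequalities come from testing the linear system \eqref{I3} with $\Lambda^{\frac12-\delta}$-weighted multipliers and then bounding the nonlinear contributions by duality at the levels $\dot H^{\frac12-\delta}$ and $\dot H^{-\frac12-\delta}$.

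\textbf{Step 1: the basic estimate \eqref{G3.10}.} Apply $\Lambda^{\frac12-\delta}$ to \eqref{I3} and take the $L^2$ inner product with $(\Lambda^{\frac12-\delta}\varrho,\Lambda^{\frac12-\delta}\omega)$. The acoustic terms $\gamma\,\mathrm{div}\,\omega$ and $\gamma\nabla\varrho$ cancel after integration by parts. The viscous terms give
\begin{align*}
\mu_1\|\omega\|_{\dot H^{\frac32-\delta}}^2+\mu_2\|\mathrm{div}\,\omega\|_{\dot H^{\frac12-\delta}}^2\geq \mu_1\|\omega\|_{\dot H^{\frac32-\delta}}^2 ,
\end{align*}
so we may take $\lambda_1\leq 2\mu_1$. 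It remains to bound the pairings $\langle\Lambda^{\frac12-\delta}\mathcal N_i,\Lambda^{\frac12-\delta}\cdot\rangle$.

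For each term we move one half-derivative by duality:
\begin{align*}
|\langle\Lambda^{\frac12-\delta}\mathcal N_2,\Lambda^{\frac12-\delta}\omega\rangle|\leq\|\mathcal N_2\|_{\dot H^{-\frac12-\delta}}\|\omega\|_{\dot H^{\frac32-\delta}}.
\end{align*}
Every term of $\mathcal N_2$ has the form $a\,\nabla b$ with $a\in\{\varrho,\bar\rho,\omega\}$ and $\nabla b$ one of $\nabla\omega,\nabla^2\omega,\nabla\varrho,\nabla\bar\rho\cdot(\cdot)$. Smooth coefficient functions are handled by the composition estimate in Lemma~\ref{L2.2} together with \eqref{G3.7}. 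We use Lemma~\ref{L2.1} with $s_1=\frac32$ (in $\dot B^{\frac32}_{2,1}$) and $s_2=-\frac12-\delta$, in its $r=\infty/2$ variant, or Sobolev duality $\dot H^{\frac12+\delta}\hookrightarrow L^{3/(1-\delta)}$:
\begin{align*}
\|a\,\nabla b\|_{\dot H^{-\frac12-\delta}}\lesssim \|a\|_{L^3}\|\nabla b\|_{\dot H^{\frac12-\delta}}\lesssim\|a\|_{\dot H^{\frac12}}\|b\|_{\dot H^{\frac32-\delta}} .
\end{align*}
Here $\|a\|_{\dot H^{\frac12}}\lesssim\varepsilon_0+\sigma$ by \eqref{G3.4} and \eqref{G3.6}.

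The delicate terms are those with two derivatives on $\omega$, namely $(\varrho+\bar\rho)\nabla^2\omega$, and the terms $\nabla\bar\rho\,\varrho$. For $(\varrho+\bar\rho)\nabla^2\omega$ we write $\nabla^2\omega$ as $\nabla$ of $\nabla\omega$, integrate by parts once, and pay with a commutator $\nabla(\varrho+\bar\rho)\nabla\omega$, which is bounded the same way. For $\nabla\bar\rho\,\varrho$ we put $\nabla\bar\rho\in L^3$ and $\varrho\in L^{\frac{3}{1-\delta}}$, which gives $\|\varrho\|_{\dot H^{\frac12+\delta}}$. This is not directly $\dot H^{\frac32-\delta}$, so instead we pair $\bar\rho$ in $\dot B^{\frac12}_{2,1}$ against $\nabla\varrho$ after rewriting $\nabla\bar\rho\,\varrho=\nabla(\bar\rho\varrho)-\bar\rho\nabla\varrho$.

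For $\mathcal N_1=\mathrm{div}[(\varrho+\bar\rho)\omega]$, the dangerous piece is $\omega\cdot\nabla\varrho$, because $\varrho$ has no dissipation of its own. We bound $\|\mathcal N_1\|_{\dot H^{-\frac12-\delta}}\lesssim\|(\varrho+\bar\rho)\omega\|_{\dot H^{\frac12-\delta}}$ and pair it against $\|\varrho\|_{\dot H^{\frac32-\delta}}$ using $\|\omega\|_{\dot H^{1/2}}$ small. This yields the $\varrho$-part of the right-hand side of \eqref{G3.10}. Young's inequality then finishes the estimate.

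\textbf{Step 2: the cross estimate \eqref{G3.11}.} Differentiate $\langle\Lambda^{\frac12-\delta}\nabla\varrho,\Lambda^{\frac12-\delta}\omega\rangle$ in time and substitute the equations.
\begin{itemize}
\item The term $\gamma\nabla\varrho$ in the $\omega$-equation produces $-\gamma\|\varrho\|_{\dot H^{\frac32-\delta}}^2$, which is the source of the dissipation for $\varrho$.
\item The term $\gamma\,\mathrm{div}\,\omega$ in the $\varrho$-equation gives $\gamma\|\mathrm{div}\,\omega\|_{\dot H^{\frac12-\delta}}^2\leq C\|\omega\|_{\dot H^{\frac32-\delta}}^2$.
\item The viscous terms give $\langle\Lambda^{\frac12-\delta}\nabla\varrho,\Lambda^{\frac12-\delta}\Delta\omega\rangle\leq\frac{\gamma}{4}\|\varrho\|_{\dot H^{\frac32-\delta}}^2+C\|\omega\|_{\dot H^{\frac52-\delta}}^2$.
\end{itemize}
For the nonlinear terms we move the derivative off $\nabla\varrho$ for $\mathcal N_1$, obtaining $\langle\Lambda^{\frac12-\delta}\mathcal N_1,\Lambda^{\frac12-\delta}\mathrm{div}\,\omega\rangle$ with $\|\mathcal N_1\|_{\dot H^{\frac12-\delta}}$. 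For $\mathcal N_2$ we bound $\|\mathcal N_2\|_{\dot H^{\frac12-\delta}}\|\varrho\|_{\dot H^{\frac32-\delta}}$.

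\textbf{Main obstacle.} The hardest part is controlling $\|\mathcal N_2\|_{\dot H^{\frac12-\delta}}$ and $\|\mathcal N_1\|_{\dot H^{\frac12-\delta}}$ with only small multiples of the allowed norms, since terms like $\bar\rho\nabla^2\omega$ and $\varrho\nabla\mathrm{div}\,\omega$ carry $\frac52-\delta$ derivatives. We handle them with Lemma~\ref{L2.3} (for $s=\frac12-\delta\in(0,1)$) in the form
\begin{align*}
\|a\nabla^2\omega\|_{\dot H^{\frac12-\delta}}\lesssim\|a\|_{L^\infty}\|\omega\|_{\dot H^{\frac52-\delta}}+\|a\|_{\dot H^{\frac12-\delta}\text{-type}}\|\nabla^2\omega\|_{L^\infty} .
\end{align*}
The second term is bounded by interpolating $\|\nabla^2\omega\|_{L^\infty}\lesssim\|\omega\|_{\dot H^3}^{1/2}\|\omega\|_{\dot H^4}^{1/2}$. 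The small constant multiplying $\|\omega\|_{\dot H^{\frac52-\delta}}$ is placed into the $C\|\omega\|_{\dot H^{\frac52-\delta}}^2$ term. The $\dot H^4$ and $\dot H^3$ contributions, weighted by $\varepsilon_0+\sigma$, account for the extra terms $\|\varrho\|_{\dot H^3}^2$ and $\|\omega\|_{\dot H^4}^2$ on the right of \eqref{G3.11}. Similarly $\bar\rho\nabla\varrho$ and $\varrho\nabla\varrho$ are estimated by $\|\bar\rho,\varrho\|_{L^\infty\cap\dot B^{\frac32}_{2,1}}\|\varrho\|_{\dot H^{\frac32-\delta}}$, and the $\nabla\bar\rho\,\varrho$ terms as in Step 1. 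Absorbing $\frac{\gamma}{2}\|\varrho\|_{\dot H^{\frac32-\delta}}^2$ on the left then gives \eqref{G3.11} with $\lambda_1\leq\gamma/2$.
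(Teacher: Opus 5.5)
Your overall scheme matches the paper's: the two $\Lambda^{\frac12-\delta}$ energy identities, duality at the levels $\dot H^{-\frac12-\delta}$ and $\dot H^{\frac12-\delta}$, and Young's inequality for the linear cross terms. However, you mishandle the terms that are \emph{linear} in the solution with a stationary coefficient, in two places, and those steps fail as written.

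\textbf{Step~1, the term $\bar\rho\omega$.} You bound $\|(\varrho+\bar\rho)\omega\|_{\dot H^{\frac12-\delta}}$ using the smallness of $\|\omega\|_{\dot H^{\frac12}}$. For the piece $\bar\rho\omega$ this leaves $\|\omega\|_{\dot H^{\frac12}}\|\bar\rho\|_{\dot H^{\frac32-\delta}}\|\varrho\|_{\dot H^{\frac32-\delta}}\lesssim\varepsilon_0\|\omega\|_{\dot H^{\frac12}}\|\varrho\|_{\dot H^{\frac32-\delta}}$, which is only linear in the dissipation. The norm $\|\omega\|_{\dot H^{\frac12}}$ is an energy-level homogeneous norm and is not controlled by $\|\omega\|_{\dot H^{\frac32-\delta}}$. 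So this cannot be brought to the form \eqref{G3.10}; at best it produces a Gronwall term $C\varepsilon_0\mathcal E(t)$, which destroys the global bound. In a term linear in the solution the smallness must come from the coefficient: $\|\bar\rho\omega\|_{\dot H^{\frac12-\delta}}\lesssim\|\bar\rho\|_{\dot H^{\frac12}}\|\omega\|_{\dot H^{\frac32-\delta}}$ by Lemma~\ref{L2.1}. This is what the paper does, for $\varrho\omega$ as well.

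\textbf{Step~2, the term $a\nabla^2\omega$.} You apply Lemma~\ref{L2.3} with $s=\frac12-\delta$, which requires $a\in\dot H^{\frac12-\delta}$. For $a=h=(\varrho+\bar\rho)/(\varrho+\rho_*)$ this means $\bar\rho\in\dot H^{\frac12-\delta}$, and \eqref{G1.7} does not give this. It only places $\bar\rho$ in $\dot B^r_{2,1}$ for $\frac12\le r\le\frac92$, and $\dot B^{\frac12}_{2,1}\cap\dot B^{\frac92}_{2,1}\not\hookrightarrow\dot H^{\frac12-\delta}$. For instance, take a function with $\|\dot\Delta_jf\|_{L^2}\sim j^{-2}2^{-j/2}$ for $j\le-1$ and Fourier support in $|\xi|\le1$. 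Keeping the potential out of $\dot H^{\frac12-\delta}$ is exactly what separates its class from that of the data.

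The repair is to use $\dot B^{\frac32}_{2,1}$ as a multiplier on $\dot H^{\sigma}$, $|\sigma|<\frac32$: $\|h\nabla^2\omega\|_{\dot H^{\frac12-\delta}}\lesssim\|h\|_{\dot B^{\frac32}_{2,1}}\|\omega\|_{\dot H^{\frac52-\delta}}$. This also makes your $L^\infty$ interpolation through $\dot H^3$ and $\dot H^4$ unnecessary. At level $-\frac12-\delta$ the same bound, $\|h\Delta\omega\|_{\dot H^{-\frac12-\delta}}\lesssim\|h\|_{\dot B^{\frac32}_{2,1}}\|\omega\|_{\dot H^{\frac32-\delta}}$, settles the viscous terms in Step~1 without integrating by parts. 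Note that your integration by parts left the main term $\|h\nabla\omega\|_{\dot H^{\frac12-\delta}}$ unestimated, and it needs precisely this multiplier bound.

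\textbf{Pressure terms.} Your rewrite $\varrho\nabla\bar\rho=\nabla(\bar\rho\varrho)-\bar\rho\nabla\varrho$ is in the right spirit. But the actual term is $[F(\rho_*+\varrho)-F(\rho_*)]\nabla\bar\rho=\varrho\,G(\varrho,\bar\rho)\nabla\bar\rho$ with a nonconstant $G$. The rewrite therefore regenerates a term $\bar\rho\varrho\,\partial_{\bar\rho}G\,\nabla\bar\rho$ of the same type. It does close, by H\"older with $L^3\cdot L^{3/\delta}\cdot L^3\subset L^{3/(2+\delta)}\hookrightarrow\dot H^{-\frac12-\delta}$ and $\dot H^{\frac32-\delta}\hookrightarrow L^{3/\delta}$, but you need to say so.

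The paper sidesteps this bookkeeping. The two pressure terms in $\mathcal N_2$ combine into the exact gradient $-\frac{\mu}{\mu_1\gamma}\nabla\mathcal P$ with $\mathcal P=\bar\rho\varrho p_1(\bar\rho)+\varrho^2p_2(\varrho,\bar\rho)$ (see \eqref{G3.25}). So one only needs $\|\mathcal P\|_{\dot H^{\frac12-\delta}}$ and $\|\mathcal P\|_{\dot H^{\frac32-\delta}}$. Both follow from the product laws with $\bar\rho$ in $\dot H^{\frac12}$, respectively $\dot B^{\frac32}_{2,1}$, and $\varrho$ in $\dot H^{\frac32-\delta}$.
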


\begin{proof}
Applying $\Lambda^{\frac12-\delta}$ to \eqref{I3}$_1$ and \eqref{I3}$_2$, taking the $L^2$ inner products of the resulting equations with $\Lambda^{\frac12-\delta}\varrho$ and $\Lambda^{\frac12-\delta}\omega$, respectively, and adding the two identities, we obtain
\begin{align}\label{G3.12}
 &\frac12\frac{\mathrm d}{\mathrm dt}
 \big(\|\varrho\|_{\dot H^{\frac12-\delta}}^2
 +\|\omega\|_{\dot H^{\frac12-\delta}}^2\big)
 +\mu_1\|\nabla\omega\|_{\dot H^{\frac12-\delta}}^2
 +\mu_2\|\dive\omega\|_{\dot H^{\frac12-\delta}}^2\nonumber\\
 =&\,
 \big\langle\Lambda^{\frac12-\delta}\mathcal N_1,
 \Lambda^{\frac12-\delta}\varrho\big\rangle
 +\big\langle\Lambda^{\frac12-\delta}\mathcal N_2,
 \Lambda^{\frac12-\delta}\omega\big\rangle
 =:I_1+I_2.
\end{align}
For brevity, denote the dissipation appearing below by
\begin{align*}
 \mathcal D(t):=
 \|\varrho\|_{\dot H^{\frac32-\delta}}^2
 +\|\omega\|_{\dot H^{\frac32-\delta}}^2
 +\|\varrho\|_{\dot H^3}^2
 +\|\omega\|_{\dot H^4}^2.
\end{align*}
By \eqref{G3.1}--\eqref{G3.6}, all coefficients in the nonlinear
estimates below are bounded by $C(\varepsilon_0+\sigma)$.

For the pressure terms, define
\begin{align*}
 F(z):=\frac{P'(z)}{z},\qquad
 \mathcal P(\varrho,\bar\rho)
 :=\int_{\rho_*}^{\rho_*+\varrho}F(z)\,\mathrm dz
 -F(\rho_\infty)\varrho.
\end{align*}
Simple calculation gives
\begin{align}\label{G3.18}
 \nabla\mathcal P={}&
 \big(F(\rho_*+\varrho)-F(\rho_*)\big)\nabla\bar\rho
 +\big(F(\rho_*+\varrho)-F(\rho_\infty)\big)\nabla\varrho,
\end{align}
and
\begin{align}\label{G3.19}
 \mathcal P(\varrho,\bar\rho)
 =\bar\rho\,\varrho\,p_1(\bar\rho)
 +\varrho^2p_2(\varrho,\bar\rho),
\end{align}
where
\begin{align*}
 p_1(b)&:=\int_0^1F'(\rho_\infty+\theta b)\,\mathrm d\theta,\\
 p_2(v,b)&:=\int_0^1(1-\theta)
 F'(\rho_\infty+b+\theta v)\,\mathrm d\theta.
\end{align*}
A direct product estimate yields
\begin{align}\label{G3.20}
 \|\mathcal P\|_{\dot H^{\frac12-\delta}}
 &\lesssim
 \big(\|\bar\rho\|_{\dot H^{\frac12}}
 +\|\varrho\|_{\dot H^{\frac12}}\big)
 \|\varrho\|_{\dot H^{\frac32-\delta}},\nonumber\\
 \|\mathcal P\|_{\dot H^{\frac32-\delta}}
 &\lesssim
 \big(\|\bar\rho\|_{\dot B^{\frac32}_{2,1}}
 +\|\varrho\|_{\dot B^{\frac32}_{2,1}}\big)
 \|\varrho\|_{\dot H^{\frac32-\delta}}.
\end{align}
Moreover, for
\begin{align*}
 h(\varrho,\bar\rho)
 :=\frac{\varrho+\bar\rho}{\varrho+\rho_*}
 =\frac{\varrho+\bar\rho}{\rho_\infty+\varrho+\bar\rho},
\end{align*}
we have
\begin{align}\label{G3.22}
 \|h\|_{\dot B^{\frac32}_{2,1}}
 \lesssim
 \|\varrho\|_{\dot B^{\frac32}_{2,1}}
 +\|\bar\rho\|_{\dot B^{\frac32}_{2,1}}.
\end{align}

Since $\mathcal N_1=-\frac{\mu_1\gamma}{\mu}\dive[(\varrho+\bar\rho)\omega]$, Lemma \ref{L2.1} gives
\begin{align*}
 \|\mathcal N_1\|_{\dot H^{-\frac12-\delta}}
 &\lesssim
 \|(\varrho+\bar\rho)\omega\|_{\dot H^{\frac12-\delta}}\lesssim
 \big(\|\varrho\|_{\dot H^{\frac12}}
 +\|\bar\rho\|_{\dot H^{\frac12}}\big)
 \|\omega\|_{\dot H^{\frac32-\delta}},
\end{align*}
which, together with \eqref{G3.2}--\eqref{G3.6}, yields
\begin{align}\label{G3.24}
 |I_1|
 &\leq
 \|\mathcal N_1\|_{\dot H^{-\frac12-\delta}}
 \|\varrho\|_{\dot H^{\frac32-\delta}} \lesssim
 (\varepsilon_0+\sigma)
 \big(
 \|\varrho\|_{\dot H^{\frac32-\delta}}^2
 +\|\omega\|_{\dot H^{\frac32-\delta}}^2
 \big).
\end{align}
By \eqref{G3.18}, \eqref{I4}$_2$ can be written exactly as
\begin{align}\label{G3.25}
 \mathcal N_2={}&
 -\frac{\mu_1\gamma}{\mu}\omega\cdot\nabla\omega
 -\mu_1h\Delta\omega
 -\mu_2h\nabla\dive\omega
 -\frac{\mu}{\mu_1\gamma}\nabla\mathcal P.
\end{align}
The four terms on the right-hand side of \eqref{G3.25} satisfy
\begin{align}
 \|\omega\cdot\nabla\omega\|_{\dot H^{-\frac12-\delta}}
 &\lesssim
 \|\omega\|_{\dot H^{\frac12}}
 \|\omega\|_{\dot H^{\frac32-\delta}},\label{G3.26}\\
 \|h\Delta\omega\|_{\dot H^{-\frac12-\delta}}
 +\|h\nabla\dive\omega\|_{\dot H^{-\frac12-\delta}}
 &\lesssim
 \|h\|_{\dot B^{\frac32}_{2,1}}
 \|\omega\|_{\dot H^{\frac32-\delta}},\label{G3.26.1}\\
 \|\nabla\mathcal P\|_{\dot H^{-\frac12-\delta}}
 &=\|\mathcal P\|_{\dot H^{\frac12-\delta}}
 \lesssim
 \big(\|\bar\rho\|_{\dot H^{\frac12}}
 +\|\varrho\|_{\dot H^{\frac12}}\big)
 \|\varrho\|_{\dot H^{\frac32-\delta}}.\label{G3.26.2}
\end{align}
It follows from \eqref{G3.26}--\eqref{G3.26.2} that
\begin{align}\label{G3.27}
 |I_2|
 \leq
 \|\mathcal N_2\|_{\dot H^{-\frac12-\delta}}
 \|\omega\|_{\dot H^{\frac32-\delta}} \lesssim
 (\varepsilon_0+\sigma)
 \big(
 \|\varrho\|_{\dot H^{\frac32-\delta}}^2
 +\|\omega\|_{\dot H^{\frac32-\delta}}^2
 \big).
\end{align}
Substituting \eqref{G3.24} and \eqref{G3.27} into \eqref{G3.12}, and then using the smallness of $\varepsilon_0+\sigma$, we obtain \eqref{G3.10}.

We next recover the density dissipation. Applying $\Lambda^{\frac12-\delta}$ to \eqref{I3}$_2$, taking the inner product of the resulting equation with $\Lambda^{\frac12-\delta}\nabla\varrho$, and using \eqref{I3}$_1$, we have
\begin{align}\label{G3.28}
 &\frac{\mathrm d}{\mathrm dt}
 \big\langle\Lambda^{\frac12-\delta}\nabla\varrho,
 \Lambda^{\frac12-\delta}\omega\big\rangle
 +\gamma\|\varrho\|_{\dot H^{\frac32-\delta}}^2\nonumber\\
 =&\,
 \gamma\|\dive\omega\|_{\dot H^{\frac12-\delta}}^2
 +\mu_1\big\langle\Lambda^{\frac12-\delta}\Delta\omega,
 \Lambda^{\frac12-\delta}\nabla\varrho\big\rangle
 +\mu_2\big\langle\Lambda^{\frac12-\delta}\nabla\dive\omega,
 \Lambda^{\frac12-\delta}\nabla\varrho\big\rangle\nonumber\\
 & -\big\langle\Lambda^{\frac12-\delta}\mathcal N_1,
 \Lambda^{\frac12-\delta}\dive\omega\big\rangle+\big\langle\Lambda^{\frac12-\delta}\mathcal N_2,
 \Lambda^{\frac12-\delta}\nabla\varrho\big\rangle.
\end{align}
For any $\eta>0$, we have
\begin{align}\label{G3.29}
 \mu_1\big|\big\langle\Lambda^{\frac12-\delta}\Delta\omega,
 \Lambda^{\frac12-\delta}\nabla\varrho\big\rangle\big|
 +\mu_2\big|\big\langle\Lambda^{\frac12-\delta}\nabla\dive\omega,
 \Lambda^{\frac12-\delta}\nabla\varrho\big\rangle\big| \leq
 \eta\|\varrho\|_{\dot H^{\frac32-\delta}}^2
 +C_\eta\|\omega\|_{\dot H^{\frac52-\delta}}^2,
\end{align}
and $\mathcal N_1$ satisfies
\begin{align*}
 \|\mathcal N_1\|_{\dot H^{\frac12-\delta}}
  \lesssim
 \|(\varrho+\bar\rho)\omega\|_{\dot H^{\frac32-\delta}} \lesssim
 \big(\|\varrho\|_{\dot B^{\frac32}_{2,1}}
 +\|\bar\rho\|_{\dot B^{\frac32}_{2,1}}\big)
 \|\omega\|_{\dot H^{\frac32-\delta}}.
\end{align*}
For $\mathcal N_2$, the estimates in \eqref{G3.20} and \eqref{G3.22}, and the representation \eqref{G3.25} give
\begin{align*}
 \|\omega\cdot\nabla\omega\|_{\dot H^{\frac12-\delta}}
 &\lesssim
 \|\omega\|_{\dot B^{\frac32}_{2,1}}
 \|\omega\|_{\dot H^{\frac32-\delta}},\nonumber\\
 \|h\Delta\omega\|_{\dot H^{\frac12-\delta}}
 +\|h\nabla\dive\omega\|_{\dot H^{\frac12-\delta}}
 &\lesssim
 \|h\|_{\dot B^{\frac32}_{2,1}}
 \|\omega\|_{\dot H^{\frac52-\delta}}\lesssim \big(\|\bar\rho\|_{\dot B^{\frac32}_{2,1}}
 +\|\varrho\|_{\dot B^{\frac32}_{2,1}}\big) \|\omega\|_{\dot H^{\frac52-\delta}},\nonumber\\
 \|\nabla\mathcal P\|_{\dot H^{\frac12-\delta}}
 &=\|\mathcal P\|_{\dot H^{\frac32-\delta}} \lesssim
 \big(\|\bar\rho\|_{\dot B^{\frac32}_{2,1}}
 +\|\varrho\|_{\dot B^{\frac32}_{2,1}}\big)
 \|\varrho\|_{\dot H^{\frac32-\delta}}.
\end{align*}
Consequently,
\begin{align}\label{G3.32}
 &\big|\big\langle\Lambda^{\frac12-\delta}\mathcal N_1,
 \Lambda^{\frac12-\delta}\dive\omega\big\rangle\big|+\big|\big\langle\Lambda^{\frac12-\delta}\mathcal N_2,
 \Lambda^{\frac12-\delta}\nabla\varrho\big\rangle\big|
 \nonumber\\
 \leq&\,
 \eta\|\varrho\|_{\dot H^{\frac32-\delta}}^2
 +C_\eta\|\omega\|_{\dot H^{\frac32-\delta}}^2
 +C_\eta\|\omega\|_{\dot H^{\frac52-\delta}}^2
 +C(\varepsilon_0+\sigma)\mathcal D(t).
\end{align}
Substituting \eqref{G3.29} and \eqref{G3.32} into \eqref{G3.28} and choosing $\eta>0$ sufficiently small yield \eqref{G3.11}.
\end{proof}

With the low-order fractional homogeneous damping bound established above, we proceed to derive high-order a priori estimates in $\dot{H}^3$ for  $(\varrho,\omega)$, which together with the previous bound closes the homogeneous energy control.
\begin{lem}[The $\dot H^3$ estimate and closure]\label{L3.2} For the strong solution $(\varrho,\omega)$ to the Cauchy problem \eqref{I3}, if $\varepsilon_0 + \sigma$ is sufficiently small, then the following estimate holds on $[0,T]$:
\begin{align}\label{G3.33}
 &\sup_{0\leq t\leq T}
 \|(\varrho,\omega)(t)\|_{\dot H^{\frac12-\delta}\cap\dot H^3}^2+
 \int_0^T\big(
 \|\varrho(t)\|_{\dot H^{\frac32-\delta}}^2
 +\|\omega(t)\|_{\dot H^{\frac32-\delta}}^2
 +\|\varrho(t)\|_{\dot H^3}^2
 +\|\omega(t)\|_{\dot H^4}^2
 \big)\,\mathrm dt\nonumber\\
 &\leq C
 \|(\varrho_0,\omega_0)\|_{\dot H^{\frac12-\delta}\cap\dot H^3}^2.
\end{align}
\end{lem}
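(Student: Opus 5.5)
We will establish the $\dot H^3$ counterpart of the previous lemma and then combine the four differential inequalities into a single Lyapunov functional. The combined functional will control the full homogeneous energy $\|(\varrho,\omega)\|_{\dot H^{\frac12-\delta}\cap\dot H^3}^2$ and dissipate $\mathcal D(t)$.

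\textbf{Step 1: $\dot H^3$ energy identity.} We apply $\nabla^3$ to \eqref{I3}, test against $\nabla^3\varrho$ and $\nabla^3\omega$, and add. This gives
\begin{align*}
\frac12\frac{\mathrm d}{\mathrm dt}\|(\varrho,\omega)\|_{\dot H^3}^2+\mu_1\|\omega\|_{\dot H^4}^2+\mu_2\|\dive\nabla^3\omega\|_{L^2}^2=\langle\nabla^3\mathcal N_1,\nabla^3\varrho\rangle+\langle\nabla^3\mathcal N_2,\nabla^3\omega\rangle .
\end{align*}
For the $\mathcal N_2$ term we move one derivative onto $\nabla^3\omega$, so that $|\langle\nabla^3\mathcal N_2,\nabla^3\omega\rangle|\le\|\nabla^2\mathcal N_2\|_{L^2}\|\omega\|_{\dot H^4}$. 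We then estimate $\|\nabla^2\mathcal N_2\|_{L^2}$ from \eqref{G3.25} with Lemma~\ref{L2.6}. The coefficients $h$, $\bar\rho$ and $\varrho$ are bounded in $L^\infty$ and $\dot B^{3/2}_{2,1}$ by $\varepsilon_0+\sigma$ through \eqref{G3.4}, \eqref{G3.6} and \eqref{G3.22}. Every intermediate norm such as $\|\nabla^k\omega\|_{L^p}$ is interpolated between the endpoints $\dot H^{\frac32-\delta}$ and $\dot H^4$ for $\omega$, and between $\dot H^{\frac32-\delta}$ and $\dot H^3$ for $\varrho$. This yields a bound of the form $C(\varepsilon_0+\sigma)\mathcal D(t)$.

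The term in $\mathcal N_1$ that costs a derivative is $\langle\nabla^3((\varrho+\bar\rho)\dive\omega)+\nabla^3(\omega\cdot\nabla\varrho),\nabla^3\varrho\rangle$. For the transport piece we use the commutator form of Lemma~\ref{L2.6} and integrate by parts: $\langle\omega\cdot\nabla\nabla^3\varrho,\nabla^3\varrho\rangle=-\frac12\langle\dive\omega,|\nabla^3\varrho|^2\rangle$, which is bounded by $\|\nabla\omega\|_{L^\infty}\|\varrho\|_{\dot H^3}^2$. The remaining commutator terms, together with the $\bar\rho$ contributions, are controlled via $\|\bar\rho\|_{W^{3,\infty}}$ and $\|\nabla^4\bar\rho\|_{L^2}$ from \eqref{G3.4}. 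The low-order factor produced by $\nabla^3\bar\rho\,\omega$ is placed in $L^3$ or $L^6$ and bounded through $\|\omega\|_{\dot H^{\frac32-\delta}}$ and $\|\omega\|_{\dot H^4}$ by interpolation; the $\dot H^{\frac12-\delta}$ endpoint is what permits this without any $L^2$ norm. The outcome is
\begin{align*}
\frac{\mathrm d}{\mathrm dt}\|(\varrho,\omega)\|_{\dot H^3}^2+\lambda_2\|\omega\|_{\dot H^4}^2\le C(\varepsilon_0+\sigma)\mathcal D(t).
\end{align*}

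\textbf{Step 2: high-order density dissipation.} We test $\nabla^2$ of \eqref{I3}$_2$ against $\nabla^3\varrho$, arguing as for \eqref{G3.11}. This gives
\begin{align*}
\frac{\mathrm d}{\mathrm dt}\langle\nabla^2\omega,\nabla^3\varrho\rangle+\lambda_2\|\varrho\|_{\dot H^3}^2\le C\|\omega\|_{\dot H^3}^2+C\|\omega\|_{\dot H^4}^2+C(\varepsilon_0+\sigma)\mathcal D(t).
\end{align*}
The term $\|\omega\|_{\dot H^3}^2$ is bounded by interpolation between $\|\omega\|_{\dot H^{\frac32-\delta}}^2$ and $\|\omega\|_{\dot H^4}^2$. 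The same interpolation handles the term $\|\omega\|_{\dot H^{\frac52-\delta}}^2$ appearing in \eqref{G3.11}.

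\textbf{Step 3: closure.} We define
\begin{align*}
\mathcal L(t)=K\|(\varrho,\omega)\|_{\dot H^{\frac12-\delta}\cap\dot H^3}^2+\langle\Lambda^{\frac12-\delta}\nabla\varrho,\Lambda^{\frac12-\delta}\omega\rangle+\langle\nabla^2\omega,\nabla^3\varrho\rangle .
\end{align*}
The two cross terms are bounded by $\|(\varrho,\omega)\|_{\dot H^{\frac12-\delta}\cap\dot H^3}^2$, since $\dot H^{\frac32-\delta}$ and $\dot H^2$ interpolate between the endpoints. Hence, for $K$ large, $\mathcal L$ is equivalent to the homogeneous energy. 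Summing \eqref{G3.10}, \eqref{G3.11} and Steps 1 and 2 with weight $K$, and taking $\varepsilon_0+\sigma$ small, we obtain $\frac{\mathrm d}{\mathrm dt}\mathcal L+c\mathcal D\le0$. Integrating over $[0,T]$ gives \eqref{G3.33}.

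We expect the main obstacle to be Step 1, specifically the terms where $\bar\rho$ carries three or four derivatives, such as $\nabla^3\bar\rho\,\omega$ and $\nabla^4\bar\rho\,\varrho$ in $\nabla^3\mathcal N_1$ and in $\nabla^2\nabla\mathcal P$. These terms contain no derivative on the unknowns, and we cannot use the $L^2$ norm because it is not small. Our first attempt will be to pair $\|\nabla^4\bar\rho\|_{L^2}$ or $\|\nabla^3\bar\rho\|_{L^3}$ with $\|\varrho\|_{L^\infty}$ or $\|\omega\|_{L^6}$. We then bound these by $\dot B^{3/2}_{2,1}$ and $\dot H^1$ norms, interpolated between the dissipative endpoints. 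If the exponents do not balance, we will fall back on Lemma~\ref{L2.1} with $\bar\rho\in\dot B^{9/2}_{2,1}$.
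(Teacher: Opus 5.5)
Your architecture matches the paper's. You use the $\dot H^3$ energy identity and the mixed quantity $\langle\nabla^2\omega,\nabla^3\varrho\rangle$ for the density dissipation. You combine these with \eqref{G3.10}--\eqref{G3.11} into a functional equivalent to $\|(\varrho,\omega)\|_{\dot H^{\frac12-\delta}\cap\dot H^3}^2$, and you interpolate every intermediate norm between the dissipative endpoints. Moving one derivative onto $\omega$ for all of $\mathcal N_2$ is a harmless simplification: the paper does this only for the pressure term $J_{2,4}$, and handles $J_{2,1}$--$J_{2,3}$ by commutators and integration by parts. However, the step you single out as the main obstacle fails as you propose to carry it out.

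The dissipation $\mathcal D$ controls $\omega$ only at regularity indices in $[\frac32-\delta,4]$, and $\varrho$ only in $[\frac32-\delta,3]$.

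\begin{itemize}
\item Placing the undifferentiated unknown in $L^3$ or $L^6$ amounts, by scaling, to controlling it in $\dot H^{\frac12}$ or $\dot H^1$. Both indices lie below $\frac32-\delta$ because $\delta<\frac12$, so no interpolation between $\dot H^{\frac32-\delta}$ and $\dot H^4$ reaches them.
\item Using the energy instead (e.g.\ $\|\omega\|_{\dot H^1}\lesssim\sigma$) leaves a contribution such as $\varepsilon_0\sigma\|\varrho\|_{\dot H^3}$. This is linear in $\sqrt{\mathcal D}$, and its time integral is not bounded uniformly in $T$. Mixing energy and dissipation produces a power $\mathcal D^{p}$ with $p<1$, which cannot be absorbed either.
\item Your fallback has the same defect as stated. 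With $\nabla^4\bar\rho\in\dot B^{\frac12}_{2,1}$, an $L^2$ output in Lemma~\ref{L2.1} forces $\omega\in\dot B^{1}_{2,r}$. It would work only after interpolating down to $\bar\rho\in\dot B^{4+\delta}_{2,1}$ and pairing with $\omega\in\dot H^{\frac32-\delta}$.
\end{itemize}

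What the paper does is put the unknown in $L^\infty$:
\begin{align*}
\|\nabla^4\bar\rho\,\omega\|_{L^2}\lesssim\|\bar\rho\|_{\dot H^4}\|\omega\|_{\dot B^{\frac32}_{2,1}},
\qquad
\|\nabla^3\bar\rho\,\varrho\|_{L^2}\lesssim\|\bar\rho\|_{\dot H^3}\|\varrho\|_{\dot B^{\frac32}_{2,1}}.
\end{align*}
It does likewise, using $\|\bar\rho\|_{\dot H^3}\|\omega\|_{L^\infty}$, for the term $\nabla^3\bar\rho\,\omega$ of $\nabla^2\mathcal N_1$ in the mixed estimate. See \eqref{G3.36.2}, \eqref{G3.52} and the bound on $\|\mathcal P\|_{\dot H^3}$.

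Since $\frac32$ lies strictly between $\frac32-\delta$ and $3$ (resp.\ $4$), real interpolation gives \eqref{G3.39.1}--\eqref{G3.39}. This, rather than any $L^3$ or $L^6$ placement, is where $\delta>0$ enters.

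Note also that you have the extreme terms reversed. They are $\nabla^4\bar\rho\,\omega$ in $\nabla^3\mathcal N_1$ and $\nabla^3\bar\rho\,\varrho$ in $\nabla^3\mathcal P$; a term $\nabla^4\bar\rho\,\varrho$ does not occur in $\nabla^2\nabla\mathcal P$.
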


\begin{proof}
For any multi-index $\alpha$ with $|\alpha|=3$, applying $\partial^\alpha$ to $\eqref{I3}_1$ and $\eqref{I3}_2$, taking the inner products of the resulting equations with $\partial^\alpha\varrho$ and $\partial^\alpha\omega$, respectively,  and summing over all such $\alpha$, we obtain
\begin{align}\label{G3.34}
 &\frac12\frac{\mathrm d}{\mathrm dt}
 \sum_{|\alpha|=3}
 \big(\|\partial^\alpha\varrho\|_{L^2}^2
 +\|\partial^\alpha\omega\|_{L^2}^2\big)
 +\mu_1\|\omega\|_{\dot H^4}^2
 +\mu_2\|\dive\omega\|_{\dot H^3}^2\nonumber\\
 =&\,
 \sum_{|\alpha|=3}
 \big(
 \langle\partial^\alpha\mathcal N_1,\partial^\alpha\varrho\rangle
 +\langle\partial^\alpha\mathcal N_2,\partial^\alpha\omega\rangle
 \big)
 =:J_1+J_2.
\end{align}
The contribution of $\mathcal N_1$ can be decomposed as
\begin{align}\label{G3.35}
 J_1 &\sim\,
 \sum_{|\alpha|=3}
 \langle\partial^\alpha(\omega\cdot\nabla\varrho),
 \partial^\alpha\varrho\rangle
 +\sum_{|\alpha|=3}
 \langle\partial^\alpha(\varrho\dive\omega),
 \partial^\alpha\varrho\rangle +\sum_{|\alpha|=3}
 \langle\partial^\alpha\dive(\bar\rho\omega),
 \partial^\alpha\varrho\rangle\nonumber\\
 &=:J_{1,1}+J_{1,2}+J_{1,3}.
\end{align}
For the transport term in \eqref{G3.35}, Lemma \ref{L2.6} gives
\begin{align}\label{G3.36}
 |J_{1,1}|
 \lesssim&\,
 \frac12\|\dive\omega\|_{L^\infty}
 \|\varrho\|_{\dot H^3}^2
 +\sum_{|\alpha|=3}
 \|[\partial^\alpha,\omega\cdot\nabla]\varrho\|_{L^2}
 \|\partial^\alpha\varrho\|_{L^2}\nonumber\\
 \lesssim&\,
 \|\omega\|_{\dot B^{\frac52}_{2,1}}
 \|\varrho\|_{\dot H^3}^2
 +\|\varrho\|_{\dot B^{\frac52}_{2,1}}
 \|\omega\|_{\dot H^3}
 \|\varrho\|_{\dot H^3}.
\end{align}
Similarly, the second term in \eqref{G3.35} satisfies
\begin{align}
 |J_{1,2}|
 &\leq
 \|\varrho\|_{L^\infty}
 \|\omega\|_{\dot H^4}
 \|\varrho\|_{\dot H^3}
 +\sum_{|\alpha|=3}
 \|[\partial^\alpha,\varrho]\dive\omega\|_{L^2}
 \|\partial^\alpha\varrho\|_{L^2}\nonumber\\
 &\lesssim
 \|\varrho\|_{\dot B^{\frac32}_{2,1}}
 \|\omega\|_{\dot H^4}
 \|\varrho\|_{\dot H^3}
 +\Big(
 \|\varrho\|_{\dot B^{\frac52}_{2,1}}
 \|\omega\|_{\dot H^3}
 +\|\omega\|_{\dot B^{\frac52}_{2,1}}
 \|\varrho\|_{\dot H^3}
 \Big)\|\varrho\|_{\dot H^3}.\label{G3.36.1}
\end{align}
By Leibniz's rule and the product estimate in Lemma~\ref{L2.3},
\begin{align}
 |J_{1,3}|
 &\leq
 \|\dive(\bar\rho\omega)\|_{\dot H^3}
 \|\varrho\|_{\dot H^3}\nonumber\\
 &\lesssim
 \Big(
 \|\bar\rho\|_{L^\infty}\|\omega\|_{\dot H^4}
 +\|\bar\rho\|_{\dot H^4}\|\omega\|_{L^\infty}
 \Big)\|\varrho\|_{\dot H^3}\nonumber\\
 &\lesssim
 \Big(
 \|\bar\rho\|_{\dot B^{\frac32}_{2,1}}\|\omega\|_{\dot H^4}
 +\|\bar\rho\|_{\dot H^4}
 \|\omega\|_{\dot B^{\frac32}_{2,1}}
 \Big)\|\varrho\|_{\dot H^3}.\label{G3.36.2}
\end{align}
For any $\eta>0$, interpolation and Young's inequalities yield
\begin{align}
 \|\varrho\|_{\dot B^{\frac32}_{2,1}}^2
 &\lesssim
 \|\varrho\|_{\dot H^{\frac32-\delta}}^2
 +\|\varrho\|_{\dot H^3}^2,\label{G3.39.1}\\
 \|\omega\|_{\dot H^3}^2
 +\|\omega\|_{\dot B^{\frac32}_{2,1}}^2
 +\|\omega\|_{\dot B^{\frac52}_{2,1}}^2
 &\leq
 \eta\|\omega\|_{\dot H^4}^2
 +C_\eta\|\omega\|_{\dot H^{\frac32-\delta}}^2.\label{G3.39}
\end{align}
Combining \eqref{G3.36}--\eqref{G3.39} gives
\begin{align}\label{G3.40}
 |J_1|\lesssim(\varepsilon_0+\sigma)\mathcal D(t).
\end{align}

We next estimate $J_2$ on the right-hand side of \eqref{G3.34}. By \eqref{G3.25},
\begin{align*}
 J_2={}&J_{2,1}+J_{2,2}+J_{2,3}+J_{2,4},
\end{align*}
where the four terms correspond to
$\omega\cdot\nabla\omega$, $h\Delta\omega$,
$h\nabla\dive\omega$, and $\nabla\mathcal P$, respectively.
The convection term satisfies
\begin{align}\label{G3.42}
 |J_{2,1}|
 &\leq
 \frac12\|\dive\omega\|_{L^\infty}
 \|\omega\|_{\dot H^3}^2
 +\sum_{|\alpha|=3}
 \|[\partial^\alpha,\omega\cdot\nabla]\omega\|_{L^2}
 \|\partial^\alpha\omega\|_{L^2}\nonumber\\
 &\lesssim
 \|\omega\|_{\dot B^{\frac52}_{2,1}}
 \|\omega\|_{\dot H^3}^2.
\end{align}
For the variable viscosity terms, Leibniz's rule and integration by parts give
\begin{align}\label{G3.43}
 &\sum_{|\alpha|=3}\Big(
 \big|\langle\partial^\alpha(h\Delta\omega),
 \partial^\alpha\omega\rangle\big|
 +\big|\langle\partial^\alpha(h\nabla\dive\omega),
 \partial^\alpha\omega\rangle\big|\Big)\nonumber\\
 \lesssim&\,
 \big(\|h\|_{L^\infty}+\|\nabla h\|_{L^3}\big)
 \|\omega\|_{\dot H^4}^2
 +\|\nabla h\|_{L^\infty}
 \|\omega\|_{\dot H^4}\|\omega\|_{\dot H^3}\nonumber\\
 &+\|\nabla^2h\|_{L^3}
 \|\nabla^3\omega\|_{L^6}
 \|\omega\|_{\dot H^3}
 +\|\nabla^3h\|_{L^2}
 \|\nabla^2\omega\|_{L^3}
 \|\nabla^3\omega\|_{L^6}.
\end{align}
Standard composition estimates and \eqref{G3.4}--\eqref{G3.6} imply
\begin{align}\label{G3.44}
 &\|h\|_{L^\infty}
 +\|\nabla h\|_{L^3\cap L^\infty}
 +\|\nabla^2h\|_{L^3}
 +\|\nabla^3h\|_{L^2}
 \lesssim(\varepsilon_0+\sigma).
\end{align}
Since
\begin{align*}
 \|\nabla^3\omega\|_{L^6}\lesssim\|\omega\|_{\dot H^4},
 \qquad
 \|\nabla^2\omega\|_{L^3}
 \lesssim\|\omega\|_{\dot B^{\frac52}_{2,1}},
\end{align*}
we deduce from \eqref{G3.39}, \eqref{G3.43} and \eqref{G3.44} that
\begin{align}\label{G3.45}
 |J_{2,2}|+|J_{2,3}|
 \lesssim(\varepsilon_0+\sigma)\mathcal D(t).
\end{align}
For the pressure term, integration by parts gives
\begin{align*}
 |J_{2,4}|
 &=\bigg|
 \sum_{|\alpha|=3}
 \langle\partial^\alpha\mathcal P,
 \dive\partial^\alpha\omega\rangle
 \bigg|
 \leq
 \|\mathcal P\|_{\dot H^3}\|\omega\|_{\dot H^4}.
\end{align*}
By \eqref{G3.19}, Leibniz's rule, and the Sobolev embeddings in Lemma~\ref{L2.5}, we have
\begin{align*}
 \|\mathcal P\|_{\dot H^3}
 &\lesssim
 \big(\|\bar\rho\|_{L^\infty}
 +\|\varrho\|_{L^\infty}\big)
 \|\varrho\|_{\dot H^3}
 +\|\bar\rho\|_{\dot H^3}
 \|\varrho\|_{L^\infty}\\
 &\lesssim
 \big(\|\bar\rho\|_{\dot B^{\frac32}_{2,1}}
 +\|\varrho\|_{\dot B^{\frac32}_{2,1}}\big)
 \|\varrho\|_{\dot H^3}
 +\|\bar\rho\|_{\dot H^3}
 \|\varrho\|_{\dot B^{\frac32}_{2,1}}.
\end{align*}
Thus,
\begin{align}\label{G3.48}
 |J_{2,4}|\lesssim(\varepsilon_0+\sigma)\mathcal D(t).
\end{align}
Combining \eqref{G3.42}, \eqref{G3.45} and \eqref{G3.48} yields
\begin{align}\label{G3.49}
 |J_2|\lesssim(\varepsilon_0+\sigma)\mathcal D(t).
\end{align}
Substituting the estimates \eqref{G3.40} and \eqref{G3.49} into \eqref{G3.34} gives
\begin{align}\label{G3.50}
 \frac{\mathrm d}{\mathrm dt}
 \|(\varrho,\omega)\|_{\dot H^3}^2
 +\lambda_2\|\omega\|_{\dot H^4}^2
 \lesssim(\varepsilon_0+\sigma)\mathcal D(t).
\end{align}

To recover the density dissipation, similar to the calculation leading to \eqref{G3.28}, we obtain
\begin{align}\label{G3.51}
 &\frac{\mathrm d}{\mathrm dt}
 \sum_{|\alpha|=2}
 \langle\partial^\alpha\nabla\varrho,
 \partial^\alpha\omega\rangle
 +\gamma\|\varrho\|_{\dot H^3}^2\nonumber\\
 =&
 \gamma\|\dive\omega\|_{\dot H^2}^2
 +\mu_1\sum_{|\alpha|=2}
 \langle\partial^\alpha\Delta\omega,
 \partial^\alpha\nabla\varrho\rangle
 +\mu_2\sum_{|\alpha|=2}
 \langle\partial^\alpha\nabla\dive\omega,
 \partial^\alpha\nabla\varrho\rangle\nonumber\\
 &
 +\sum_{|\alpha|=2}
 \langle\partial^\alpha\mathcal N_2,
 \partial^\alpha\nabla\varrho\rangle
 -\sum_{|\alpha|=2}
 \langle\partial^\alpha\mathcal N_1,
 \partial^\alpha\dive\omega\rangle.
\end{align}
A direct calculation yields
\begin{align}\label{G3.52}
 \|\mathcal N_1\|_{\dot H^2}
 &\lesssim
 \|\varrho\|_{L^\infty}\|\omega\|_{\dot H^3}
 +\|\varrho\|_{\dot H^3}\|\omega\|_{L^\infty}
 +\|\bar\rho\|_{L^\infty}\|\omega\|_{\dot H^3}
 +\|\bar\rho\|_{\dot H^3}\|\omega\|_{L^\infty}\nonumber\\
 &\lesssim
 (\varepsilon_0+\sigma)
 \big(\|\varrho\|_{\dot H^3}
 +\|\omega\|_{\dot H^3}
 +\|\omega\|_{\dot B^{\frac32}_{2,1}}\big).
\end{align}
For the momentum nonlinearity, we also have
\begin{align*}
 \|\omega\cdot\nabla\omega\|_{\dot H^2}
 &\lesssim
 {\|\omega\|_{\dot B^{\frac32}_{2,1}}}
 \|\omega\|_{\dot H^3},\nonumber\\
 \|h\Delta\omega\|_{\dot H^2}
 +\|h\nabla\dive\omega\|_{\dot H^2}
 &\lesssim
 \|h\|_{L^\infty}\|\omega\|_{\dot H^4}
 +\|\nabla h\|_{L^\infty}\|\omega\|_{\dot H^3}
 +\|\nabla^2h\|_{L^3}\|\nabla^2\omega\|_{L^6}\nonumber\\
 &\lesssim
 (\varepsilon_0+\sigma)
 \big(\|\omega\|_{\dot H^3}+\|\omega\|_{\dot H^4}\big),\nonumber\\
 \|\nabla\mathcal P\|_{\dot H^2}
 &=\|\mathcal P\|_{\dot H^3} \lesssim
 (\varepsilon_0+\sigma)
 \big(\|\varrho\|_{\dot H^3}
 +\|\varrho\|_{\dot B^{\frac32}_{2,1}}\big).
\end{align*}
Hence,
\begin{align}\label{G3.54}
 \|\mathcal N_2\|_{\dot H^2}
 \lesssim
 (\varepsilon_0+\sigma)
 \big(\|\varrho\|_{\dot H^3}
 +\|\varrho\|_{\dot B^{\frac32}_{2,1}}
 +\|\omega\|_{\dot H^3}
 +\|\omega\|_{\dot H^4}\big).
\end{align}
Substituting \eqref{G3.52}--\eqref{G3.54} into \eqref{G3.51} and applying Young's inequality lead to
\begin{align}\label{G3.55}
 \frac{\mathrm d}{\mathrm dt}
 \sum_{|\alpha|=2}
 \langle\partial^\alpha\nabla\varrho,
 \partial^\alpha\omega\rangle
 +\lambda_2\|\varrho\|_{\dot H^3}^2 \leq
 C\|\omega\|_{\dot H^3}^2
 +C\|\omega\|_{\dot H^4}^2
 +C(\varepsilon_0+\sigma)\mathcal D(t),
\end{align}
after decreasing $\lambda_2>0$ if necessary.
For any $\eta>0$, it holds that
\begin{align}\label{G3.56}
 \|\omega\|_{\dot H^{\frac52-\delta}}^2
 +\|\omega\|_{\dot H^3}^2
 \leq
 \eta\|\omega\|_{\dot H^4}^2
 +C_\eta\|\omega\|_{\dot H^{\frac32-\delta}}^2.
\end{align}
Let $0<\eta_1\ll1$ and define
\begin{align*}
 \mathcal E(t):={}&
 \|(\varrho,\omega)\|_{\dot H^{\frac12-\delta}}^2
 +\|(\varrho,\omega)\|_{\dot H^3}^2 +\eta_1\big\langle\Lambda^{\frac12-\delta}\nabla\varrho,
 \Lambda^{\frac12-\delta}\omega\big\rangle
 +\eta_1\sum_{|\alpha|=2}
 \langle\partial^\alpha\nabla\varrho,
 \partial^\alpha\omega\rangle.
\end{align*}
By homogeneous interpolation and Cauchy's inequality, $\eta_1$ can be fixed sufficiently small so that
\begin{align}\label{G3.58}
 \mathcal E (t)
 \sim
 \|(\varrho,\omega)(t)\|_{\dot H^{\frac12-\delta}\cap\dot H^3}^2.
\end{align}
Taking a suitable linear combination of \eqref{G3.10}, \eqref{G3.11}, \eqref{G3.50} and \eqref{G3.55}, and using \eqref{G3.56}, we arrive at
\begin{align}\label{G3.59}
 \frac{\mathrm d}{\mathrm dt}\mathcal E(t)
 +\lambda_2\mathcal D(t)\leq0.
\end{align}
Integrating \eqref{G3.59} with respect to time, combined with \eqref{G3.58}, yields \eqref{G3.33}.
\end{proof}

Having established the low-order fractional damping bound and the high-order homogeneous energy control, we assemble these results to obtain the complete $H^3$ a priori estimate in the following lemma.
\begin{lem}[The energy estimate with a large $L^2$ part]\label{L3.3}
Assume that the homogeneous estimate \eqref{G3.33} holds. If $\varepsilon_0+\sigma$ is sufficiently small, then, for any $0\leq t\leq T$,
\begin{align}\label{G3.60}
  \|(\varrho,\omega)(t)\|_{H^3}^2
 +\int_0^t\big(
 \|\nabla\varrho(\tau)\|_{H^2}^2
 +\|\nabla\omega(\tau)\|_{H^3}^2
 \big)\,\mathrm d\tau \lesssim
 \|(\varrho_0,\omega_0)\|_{H^3}^2,
\end{align}
where no smallness condition is imposed on $\|(\varrho_0,\omega_0)\|_{L^2}$.
\end{lem}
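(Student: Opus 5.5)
The plan is to add the zero-order $L^2$ energy identity to the homogeneous control \eqref{G3.33}. The key point is that every nonlinear term in the $L^2$ estimate must be bounded by a quantity that is small because of \eqref{G3.1}--\eqref{G3.6} and \eqref{G3.4}, multiplied by the dissipation $\mathcal D(t)$ or by $\|\nabla(\varrho,\omega)\|^2$. The zero-order norm $\|(\varrho,\omega)\|_{L^2}$ itself must never appear as a factor on the right-hand side.

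First, take the $L^2$ inner products of \eqref{I3}$_1$ with $\varrho$ and of \eqref{I3}$_2$ with $\omega$. This gives
\begin{align*}
\frac12\frac{\mathrm d}{\mathrm dt}\|(\varrho,\omega)\|_{L^2}^2+\mu_1\|\nabla\omega\|_{L^2}^2+\mu_2\|\dive\omega\|_{L^2}^2=\langle\mathcal N_1,\varrho\rangle+\langle\mathcal N_2,\omega\rangle.
\end{align*}
We then estimate the right-hand side by duality in $\dot H^{-\frac12-\delta}\times\dot H^{\frac12+\delta}$ (or $\dot H^{-1}\times \dot H^{1}$), never in $L^2\times L^2$.

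\emph{Term $\langle\mathcal N_1,\varrho\rangle$.} Write it as $\frac{\mu_1\gamma}{\mu}\langle(\varrho+\bar\rho)\omega,\nabla\varrho\rangle$ and bound it by $\|(\varrho+\bar\rho)\omega\|_{L^2}\|\nabla\varrho\|_{L^2}$. Then use $\|(\varrho+\bar\rho)\omega\|_{L^2}\lesssim\|\varrho+\bar\rho\|_{L^3}\|\omega\|_{L^6}\lesssim(\varepsilon_0+\sigma)\|\nabla\omega\|_{L^2}$, where $L^3$ is controlled by $\dot H^{\frac12}$ via \eqref{G3.4} and \eqref{G3.6}.

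\emph{Terms in $\langle\mathcal N_2,\omega\rangle$.} Use the representation \eqref{G3.25}.
\begin{itemize}
\item The convection term is bounded by $\|\omega\|_{L^3}\|\nabla\omega\|_{L^2}\|\omega\|_{L^6}$.
\item The viscous terms $h\Delta\omega$ and $h\nabla\dive\omega$ are integrated by parts once. This gives $\|h\|_{L^\infty}\|\nabla\omega\|_{L^2}^2+\|\nabla h\|_{L^3}\|\nabla\omega\|_{L^2}\|\omega\|_{L^6}$, controlled using \eqref{G3.44}.
\item The pressure term becomes $\langle\mathcal P,\dive\omega\rangle$. By \eqref{G3.19}, $\|\mathcal P\|_{L^2}\lesssim(\|\bar\rho\|_{L^3}+\|\varrho\|_{L^3})\|\varrho\|_{L^6}\lesssim(\varepsilon_0+\sigma)\|\nabla\varrho\|_{L^2}$.
\end{itemize}
Altogether this yields
\begin{align*}
\frac{\mathrm d}{\mathrm dt}\|(\varrho,\omega)\|_{L^2}^2+\lambda\|\nabla\omega\|_{L^2}^2\lesssim(\varepsilon_0+\sigma)\big(\|\nabla\varrho\|_{L^2}^2+\|\nabla\omega\|_{L^2}^2\big).
\end{align*}

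\emph{Density dissipation.} To absorb $\|\nabla\varrho\|_{L^2}^2$, add the cross term $\eta_1\langle\nabla\varrho,\omega\rangle$. Computing as in \eqref{G3.28} at order zero gives $\gamma\|\nabla\varrho\|_{L^2}^2$ on the left and $C\|\nabla\omega\|_{H^1}^2$ on the right. The nonlinear errors are $\|\mathcal N_2\|_{L^2}\|\nabla\varrho\|_{L^2}+\|\mathcal N_1\|_{L^2}\|\dive\omega\|_{L^2}$, with $\|\mathcal N_2\|_{L^2},\|\mathcal N_1\|_{L^2}\lesssim(\varepsilon_0+\sigma)\|\nabla(\varrho,\omega)\|_{H^1}$ by Hölder and $L^\infty$ bounds of $\bar\rho,\varrho,\nabla\bar\rho$. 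The term $\nabla\bar\rho\,\varrho$ is handled by $\|\nabla\bar\rho\|_{L^3}\|\varrho\|_{L^6}$.

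\emph{Intermediate derivatives.} The norms $\|\nabla^k(\varrho,\omega)\|_{L^2}$ for $k=1,2$ and the dissipation $\|\nabla\omega\|_{H^3}$, $\|\nabla\varrho\|_{H^2}$ follow by homogeneous interpolation. The $\dot H^1,\dot H^2$ norms lie between $\dot H^{\frac12-\delta}$ and $\dot H^3$, and the $\dot H^{1},\dot H^2,\dot H^3$ dissipation norms of $\omega$ lie between $\dot H^{\frac32-\delta}$ and $\dot H^4$. These are already bounded through \eqref{G3.33}. The exception is $\|\nabla\varrho\|_{L^2}$, which sits below $\dot H^{\frac32-\delta}$; it comes from the cross term above, or from interpolating $\dot H^{\frac12-\delta}$ and $\dot H^{\frac32-\delta}$ in time-integrated form.

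\emph{Integration and closing.} Integrate the combined zero-order inequality in time, with the right side $(\varepsilon_0+\sigma)\int(\|\nabla\varrho\|^2_{H^1}+\|\nabla\omega\|^2_{H^1})$ absorbed on the left or bounded by \eqref{G3.33}. Since $\|\cdot\|_{\dot H^{\frac12-\delta}\cap\dot H^3}\lesssim\|\cdot\|_{H^3}$, this gives \eqref{G3.60} with a constant independent of the $L^2$ size.

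The main obstacle is the term $\nabla\bar\rho\,\varrho$ in the pressure nonlinearity, and the whole pressure term more generally. It contains no derivative of $\varrho$, so it must be placed against $\omega$ using $L^3\times L^6\times L^2$ Hölder; using $\|\varrho\|_{L^2}$ there would ruin uniformity. I expect this to work because $\|\bar\rho\|_{\dot H^{\frac12}}+\|\nabla\bar\rho\|_{L^3}\lesssim\varepsilon_0$ from \eqref{G3.4}. If the $\dot H^1$-level cross term proves awkward, the fallback is to keep it at the $\dot H^{\frac12-\delta}$ level already in $\mathcal E$ and interpolate.
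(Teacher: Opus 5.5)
Your proof is correct, but it is organized differently from the paper's.

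\textbf{How the paper proceeds.} The paper redoes the full energy identity for every $|\alpha|\le 3$, together with the mixed terms $\langle\partial^\alpha\nabla\varrho,\partial^\alpha\omega\rangle$ for every $|\alpha|\le 2$. It re-estimates the orders $1\le|\alpha|\le 3$ with commutator and Besov product bounds, using $\|(\varrho,\omega)\|_{\dot B^{3/2}_{2,1}\cap\dot B^{5/2}_{2,1}}\lesssim\sqrt{\mathcal D_3}$. This needs only the bootstrap smallness \eqref{G3.4}, \eqref{G3.6}, and it ends with the pointwise Lyapunov inequality \eqref{G3.82}.

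\textbf{How you proceed.} Your zero-order estimates coincide with the paper's. After that you treat \eqref{G3.33} as a black box, and by interpolation it supplies:
\begin{itemize}
\item the $L^\infty_t$ bounds on the $\dot H^1$ and $\dot H^2$ norms;
\item the time integrals of $\|\nabla^2\varrho\|_{L^2}^2$, $\|\nabla^3\varrho\|_{L^2}^2$ and $\|\nabla^k\omega\|_{L^2}^2$ for $k=2,3,4$;
\item after time integration, the term $C\eta_1\|\nabla^2\omega\|_{L^2}^2$ produced by the order-zero cross term, which cannot be absorbed pointwise.
\end{itemize}
This avoids all higher-order commutator work. As written, however, it yields only the integrated bound \eqref{G3.60}, whereas Section~\ref{S4} reuses the differential form \eqref{G3.82} in \eqref{G4.59}. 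You can recover that form within your framework: add a large multiple of \eqref{G3.59} to your order-zero inequality instead of integrating. This works because $\mathcal D(t)$ controls $\|\nabla^2(\varrho,\omega)\|_{L^2}^2$ and $\|(\varrho,\omega)\|_{L^2}^2+\mathcal E(t)\sim\|(\varrho,\omega)\|_{H^3}^2$.

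\textbf{Three small corrections.}

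First, $\|\nabla\omega\|_{L^2}$ does not lie between $\dot H^{\frac32-\delta}$ and $\dot H^4$, since $1<\frac32-\delta$. This is harmless, because your order-zero energy already provides $\int_0^t\|\nabla\omega\|_{L^2}^2$.

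Second, your fallbacks for $\int_0^t\|\nabla\varrho\|_{L^2}^2$ fail. Interpolating between $\dot H^{\frac12-\delta}$ and $\dot H^{\frac32-\delta}$ leads to $\int_0^T\|\varrho\|_{\dot H^{\frac32-\delta}}^{1+2\delta}\,\mathrm dt$, which is not bounded uniformly in $T$. The $\dot H^{\frac12-\delta}$-level cross term only gives $\dot H^{\frac32-\delta}$ dissipation. So the order-zero cross term $\eta_1\langle\nabla\varrho,\omega\rangle$ is indispensable. It does work, but $\|(\varrho,\omega)\|_{L^2}^2+\eta_1\langle\nabla\varrho,\omega\rangle$ controls $\|(\varrho,\omega)\|_{L^2}^2$ only up to $C\eta_1\|\nabla\varrho\|_{L^2}^2$, which again must be bounded via \eqref{G3.33}.

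Third, in $\|\mathcal N_1\|_{L^2}$ the term $\nabla\bar\rho\cdot\omega$ must be bounded by $\|\nabla\bar\rho\|_{L^3}\|\omega\|_{L^6}$, exactly like $\nabla\bar\rho\,\varrho$. Using the $L^\infty$ bound on $\nabla\bar\rho$ that you list would bring in $\|\omega\|_{L^2}$ and break the uniformity you are trying to preserve.
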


\begin{proof}
Arguing analogously to \eqref{G3.34}, we have
\begin{align}\label{G3.61}
 &\frac12\frac{\mathrm d}{\mathrm dt}
 \sum_{|\alpha|\leq3}
 \big(\|\partial^\alpha\varrho\|_{L^2}^2
 +\|\partial^\alpha\omega\|_{L^2}^2\big)
 +\mu_1\|\nabla\omega\|_{H^3}^2
 +\mu_2\|\dive\omega\|_{H^3}^2\nonumber\\
 =&\,
 \sum_{|\alpha|\leq3}
 \big(
 \langle\partial^\alpha\mathcal N_1,\partial^\alpha\varrho\rangle
 +\langle\partial^\alpha\mathcal N_2,\partial^\alpha\omega\rangle
 \big).
\end{align}
For $|\alpha|\leq2$, the mixed energy identity is
\begin{align}\label{G3.62}
 &\frac{\mathrm d}{\mathrm dt}
 \sum_{|\alpha|\leq2}
 \langle\partial^\alpha\nabla\varrho,
 \partial^\alpha\omega\rangle
 +\gamma\|\nabla\varrho\|_{H^2}^2\nonumber\\
 =&\,
 \gamma\|\dive\omega\|_{H^2}^2
 +\mu_1\sum_{|\alpha|\leq2}
 \langle\partial^\alpha\Delta\omega,
 \partial^\alpha\nabla\varrho\rangle
 +\mu_2\sum_{|\alpha|\leq2}
 \langle\partial^\alpha\nabla\dive\omega,
 \partial^\alpha\nabla\varrho\rangle\nonumber\\
 &
 +\sum_{|\alpha|\leq2}
 \langle\partial^\alpha\mathcal N_2,
 \partial^\alpha\nabla\varrho\rangle
 -\sum_{|\alpha|\leq2}
 \langle\partial^\alpha\mathcal N_1,
 \partial^\alpha\dive\omega\rangle.
\end{align}
Set
\begin{align*}
 \mathcal D_3(t):=
 \|\nabla\varrho(t)\|_{H^2}^2
 +\|\nabla\omega(t)\|_{H^3}^2.
\end{align*}
By homogeneous interpolation, we derive
\begin{align*}
 &\|\varrho\|_{\dot B^{\frac32}_{2,1}\cap\dot B^{\frac52}_{2,1}}
 +\|\omega\|_{\dot H^3}
 +\|\omega\|_{\dot B^{\frac32}_{2,1}\cap\dot B^{\frac52}_{2,1}}
 \lesssim \sqrt{\mathcal D_3(t)}.
\end{align*}

We first estimate the zero-order terms in \eqref{G3.61}. The continuity equation gives
\begin{align*}
 \big|\langle\dive[(\varrho+\bar\rho)\omega],\varrho\rangle\big|
 &=\left|\int_{\mathbb R^3}(\varrho+\bar\rho)
 \omega\cdot\nabla\varrho\,\mathrm dx\right|\nonumber\\
 &\leq
 \big(\|\varrho\|_{L^3}+\|\bar\rho\|_{L^3}\big)
 \|\omega\|_{L^6}\|\nabla\varrho\|_{L^2}\nonumber\\
 &\lesssim
 \big(\|\varrho\|_{\dot H^{\frac12}}
 +\|\bar\rho\|_{\dot H^{\frac12}}\big)
 \|\nabla\omega\|_{L^2}\|\nabla\varrho\|_{L^2}.
\end{align*}
The convection term satisfies
\begin{align*}
 \big|\langle\omega\cdot\nabla\omega,\omega\rangle\big|
 &=\frac12\left|\int_{\mathbb R^3}
 (\dive\omega)|\omega|^2\,\mathrm dx\right|\nonumber\\
 &\leq
 \|\omega\|_{L^3}\|\nabla\omega\|_{L^2}\|\omega\|_{L^6}
 \lesssim
 \|\omega\|_{\dot H^{\frac12}}
 \|\nabla\omega\|_{L^2}^2.
\end{align*}
Integration by parts gives
\begin{align*}
 &|\langle h\Delta\omega,\omega\rangle|
 +|\langle h\nabla\dive\omega,\omega\rangle|\nonumber\\
 \lesssim&\,
 \|h\|_{L^\infty}\|\nabla\omega\|_{L^2}^2
 +\|\nabla h\|_{L^3}\|\omega\|_{L^6}
 \|\nabla\omega\|_{L^2}\nonumber\\
 \lesssim&\,
 \big(\|h\|_{L^\infty}+\|\nabla h\|_{L^3}\big)
 \|\nabla\omega\|_{L^2}^2.
\end{align*}
Finally, \eqref{G3.19} yields
\begin{align*}
 |\langle\nabla\mathcal P,\omega\rangle|
 &=|\langle\mathcal P,\dive\omega\rangle|\nonumber\\
 &\leq
 \|\mathcal P\|_{L^2}\|\nabla\omega\|_{L^2}\nonumber\\
 &\lesssim
 \big(\|\bar\rho\|_{L^3}+\|\varrho\|_{L^3}\big)
 \|\varrho\|_{L^6}\|\nabla\omega\|_{L^2}\nonumber\\
 &\lesssim
 \big(\|\bar\rho\|_{\dot H^{\frac12}}
 +\|\varrho\|_{\dot H^{\frac12}}\big)
 \|\nabla\varrho\|_{L^2}\|\nabla\omega\|_{L^2}.
\end{align*}
Thus, the zero-order contribution is bounded by $C(\varepsilon_0+\sigma)\mathcal D_3(t)$, and no undifferentiated $L^2$ norm occurs in the small coefficient.

For $1\leq|\alpha|\leq3$, the transport terms in the continuity equation satisfy
\begin{align}
 \sum_{1\leq|\alpha|\leq3}
 \big|\langle\partial^\alpha(\omega\cdot\nabla\varrho),
 \partial^\alpha\varrho\rangle\big| \lesssim&\,
 \|\omega\|_{\dot B^{\frac52}_{2,1}}
 \|\nabla\varrho\|_{H^2}^2
 +\|\varrho\|_{\dot B^{\frac52}_{2,1}}
 \|\nabla\omega\|_{H^2}\|\nabla\varrho\|_{H^2},\label{G3.69}\\
 \sum_{1\leq|\alpha|\leq3}
 \big|\langle\partial^\alpha(\varrho\dive\omega),
 \partial^\alpha\varrho\rangle\big| \lesssim&\,
 \|\varrho\|_{\dot B^{\frac32}_{2,1}}
 \|\nabla\omega\|_{H^3}\|\nabla\varrho\|_{H^2}\nonumber\\
 &
 +\Big(
 \|\varrho\|_{\dot B^{\frac52}_{2,1}}
 \|\nabla\omega\|_{H^2}
 +\|\omega\|_{\dot B^{\frac52}_{2,1}}
 \|\nabla\varrho\|_{H^2}
 \Big)\|\nabla\varrho\|_{H^2}.\label{G3.69.1}
\end{align}
The part involving stationary density in the continuity equation can be controlled by
\begin{align}
 \sum_{1\leq|\alpha|\leq3}
 \big|\langle\partial^\alpha\dive(\bar\rho\omega),
 \partial^\alpha\varrho\rangle\big| \lesssim&\,
 \Big(
 \|\bar\rho\|_{W^{3,\infty}}
 \|\nabla\omega\|_{H^3}
 +\|\nabla^4\bar\rho\|_{L^2}
 \|\omega\|_{L^\infty}
 \Big)\|\nabla\varrho\|_{H^2}\nonumber\\
  \lesssim&\,
 \Big(
 \|\bar\rho\|_{W^{3,\infty}}
 \|\nabla\omega\|_{H^3}
 +\|\nabla^4\bar\rho\|_{L^2}
 \|\omega\|_{\dot B^{\frac32}_{2,1}}
 \Big)\|\nabla\varrho\|_{H^2}.\label{G3.69.2}
\end{align}
For the convection term in the momentum equation, it holds that
\begin{align}
 \sum_{1\leq|\alpha|\leq3}
 \big|\langle\partial^\alpha(\omega\cdot\nabla\omega),
 \partial^\alpha\omega\rangle\big|
 \lesssim
 \|\omega\|_{\dot B^{\frac52}_{2,1}}
 \|\nabla\omega\|_{H^2}^2.\label{G3.69.3}
\end{align}
The calculation in \eqref{G3.43} gives
\begin{align}
 &\sum_{1\leq|\alpha|\leq3}\Big(
 \big|\langle\partial^\alpha(h\Delta\omega),
 \partial^\alpha\omega\rangle\big|
 +\big|\langle\partial^\alpha(h\nabla\dive\omega),
 \partial^\alpha\omega\rangle\big|\Big)\nonumber\\
 \lesssim&\,
 \Big(
 \|h\|_{L^\infty}
 +\|\nabla h\|_{L^3\cap L^\infty}
 +\|\nabla^2h\|_{L^3}
 +\|\nabla^3h\|_{L^2}
 \Big)\|\nabla\omega\|_{H^3}^2\nonumber\\
 \lesssim&\,(\varepsilon_0+\sigma)\|\nabla\omega\|_{H^3}^2.\label{G3.69.4}
\end{align}
For the pressure contribution, \eqref{G3.19} and Leibniz's rule give
\begin{align*}
 \|\nabla\mathcal P\|_{H^2}
 &\lesssim
 \Big(
 \|\bar\rho\|_{W^{3,\infty}}
 +\|\varrho\|_{\dot B^{\frac32}_{2,1}\cap\dot H^3}
 \Big)\|\nabla\varrho\|_{H^2}
 +\|\nabla\bar\rho\|_{H^2}
 \|\varrho\|_{\dot B^{\frac32}_{2,1}}\nonumber\\
 &\lesssim(\varepsilon_0+\sigma)\|\nabla\varrho\|_{H^2}.
\end{align*}
Therefore, we have
\begin{align}\label{G3.74}
 &\sum_{1\leq|\alpha|\leq3}
 \big|\langle\partial^\alpha\nabla\mathcal P,
 \partial^\alpha\omega\rangle\big|
 \leq
 \|\nabla\mathcal P\|_{H^2}
 \|\nabla\omega\|_{H^3}
 \lesssim(\varepsilon_0+\sigma)\mathcal D_3(t).
\end{align}
Combining \eqref{G3.69}--\eqref{G3.74} with \eqref{G3.4}, \eqref{G3.6}, and Young's inequality, we obtain
\begin{align}\label{G3.75}
 &\sum_{|\alpha|\leq3}
 \big(
 \big|\langle\partial^\alpha\mathcal N_1,
 \partial^\alpha\varrho\rangle\big|
 +\big|\langle\partial^\alpha\mathcal N_2,
 \partial^\alpha\omega\rangle\big|
 \big)
 \lesssim(\varepsilon_0+\sigma)\mathcal D_3(t).
\end{align}

It remains to estimate the nonlinear terms in \eqref{G3.62}. By Lemma \ref{L2.5} and \eqref{I4},
\begin{align}\label{G3.76}
 \|\mathcal N_1\|_{H^2}
  \lesssim
   {\|\nabla \omega\|_{H^2}(\|\nabla\bar{\rho}\|_{H^2}+\|\nabla\varrho\|_{H^2})}
 \lesssim
 (\varepsilon_0+\sigma)
 \big(\|\nabla\varrho\|_{H^2}
 +\|\nabla\omega\|_{H^2}\big).
\end{align}
The terms in $\mathcal N_2$ satisfy
\begin{align*}
 \|\omega\cdot\nabla\omega\|_{H^2}
 &\lesssim
  {\|\omega\|_{\dot B^{\frac32}_{2,1}\cap\dot B^{\frac52}_{2,1}}}
 \|\nabla\omega\|_{H^2},\nonumber\\
 \|h\Delta\omega\|_{H^2}
 +\|h\nabla\dive\omega\|_{H^2}
 &\lesssim
 \|h\|_{L^\infty}\|\nabla\omega\|_{H^3}
 +\|\nabla h\|_{L^\infty}\|\nabla\omega\|_{H^2}
 +\|\nabla^2h\|_{L^3}\|\nabla^2\omega\|_{L^6}\nonumber\\
 &\lesssim
 (\varepsilon_0+\sigma)\|\nabla\omega\|_{H^3},\nonumber\\
 \|\nabla\mathcal P\|_{H^2}
 &\lesssim
 (\varepsilon_0+\sigma)\|\nabla\varrho\|_{H^2}.
\end{align*}
Thus,  it holds that
\begin{align}\label{G3.78}
 \|\mathcal N_2\|_{H^2}
 \lesssim
 (\varepsilon_0+\sigma)
 \big(\|\nabla\varrho\|_{H^2}
 +\|\nabla\omega\|_{H^3}\big).
\end{align}
As a result,  from \eqref{G3.76} and \eqref{G3.78}, we get
\begin{align}\label{G3.79}
 &\sum_{|\alpha|\leq2}
 \big(
 \big|\langle\partial^\alpha\mathcal N_2,
 \partial^\alpha\nabla\varrho\rangle\big|
 +\big|\langle\partial^\alpha\mathcal N_1,
 \partial^\alpha\dive\omega\rangle\big|
 \big)
 \lesssim(\varepsilon_0+\sigma)\mathcal D_3(t).
\end{align}
Let $0<\eta_2\ll1$ and define
\begin{align*}
 \mathcal E_3(t):={}&
 \sum_{|\alpha|\leq3}
 \Big(\|\partial^\alpha\varrho\|_{L^2}^2
 +\|\partial^\alpha\omega\|_{L^2}^2\Big) +\eta_2\sum_{|\alpha|\leq2}
 \langle\partial^\alpha\nabla\varrho,
 \partial^\alpha\omega\rangle.
\end{align*}
For $\eta_2$ sufficiently small, we have
\begin{align}\label{G3.81}
 \mathcal E_3(t)
 \sim
 \|(\varrho,\omega)(t)\|_{H^3}^2.
\end{align}
Combining \eqref{G3.61}, \eqref{G3.62}, \eqref{G3.75} and \eqref{G3.79}, we obtain
\begin{align}\label{G3.82}
 \frac{\mathrm d}{\mathrm dt}\mathcal E_3(t)
 +\lambda_3\mathcal D_3(t)\leq0.
\end{align}
Then, integrating \eqref{G3.82} over $(0,t)$ and using \eqref{G3.81}, we get  \eqref{G3.60}.
\end{proof}

\subsection{Proof of Theorem~\ref{T1.1}}
Since the initial density is strictly positive and the stationary
coefficients belong to $W^{3,\infty}$, the standard local theory for
compressible viscous flows, applied to \eqref{I3},
provides a unique strong solution on a maximal interval $[0,T^*)$;
see, for example, \cite{CK-JDE-2003,CCK-JMPA-2004}. Let
\begin{align*}
 T^\sharp:=\sup\Big\{T\in(0,T^*):
 \sup_{0\leq t\leq T}
 \|(\varrho,\omega)(t)\|_{\dot H^{\frac12-\delta}\cap\dot H^3}
 \leq2C\varepsilon_0\Big\},
\end{align*}
where $C$ is the uniform constant in the homogeneous a priori estimate \eqref{G3.33}. If $\varepsilon_0$ is sufficiently small, Lemma~\ref{L3.2} improves the bootstrap bound to
\begin{align*}
 \sup_{0\leq t\leq T^\sharp}
 \|(\varrho,\omega)(t)\|_{\dot H^{\frac12-\delta}\cap\dot H^3}
 \leq C\varepsilon_0.
\end{align*}
By continuity, $T^\sharp=T^*$. In particular, the density remains uniformly positive on $[0,T^*)$.

Applying Lemma~\ref{L3.3} on $[0,T^*)$, we obtain
\begin{align*}
  \sup_{0\leq t<T^*}\|(\varrho,\omega)(t)\|_{H^3}^2
 +\int_0^{T^*}\Big(
 \|\nabla\varrho(t)\|_{H^2}^2
 +\|\nabla\omega(t)\|_{H^3}^2
 \Big)\,\mathrm dt \lesssim
 \|(\varrho_0,\omega_0)\|_{H^3}^2.
\end{align*}
The continuation criterion for strong solutions then gives
$T^*=\infty$.
Since $\omega$ is a fixed
positive multiple of $u$, the estimate \eqref{G1.8} follows, which
completes the proof of Theorem~\ref{T1.1}.\hfill $\Box$

\section{Optimal time-decay rates of the zeroth and first derivatives}\label{S4}

In this section, we prove Theorem~\ref{T1.2} directly for $(\varrho,\omega)$ in
\eqref{I3}. After the constants fixed in
Section~\ref{S3}, choose
\begin{align}\label{G4.1a}
 0<\eta_3\leq\frac1{16}
 \min\left\{1,\frac{\mu_1}{\gamma},
 \frac{\mu_1\gamma}{(\mu_1+\mu_2)^2}\right\},
\end{align}
and then fix
\begin{align}\label{G4.1}
 0<\lambda_4<\frac1{64}
 \min\big\{\lambda_3,\mu_1,\mu_1+\mu_2,\eta_3\gamma\big\}.
\end{align}
The same $\eta_3$ will be used in Section~\ref{S5}. We prove
\begin{align}\label{G4.2}
 \|\nabla^k(\varrho,\omega)(t)\|_{L^2}
 \lesssim (1+t)^{-\frac{k-s}{2}},\qquad k=0,1.
\end{align}

\subsection{Linear spectral estimate and low-frequency products}

Consider first the constant-coefficient homogeneous linear system
associated with \eqref{I3}:
\begin{equation}\label{G4.3}
 \left\{
 \begin{aligned}
  &\partial_t\varrho+\gamma\dive\omega=0,\\
  &\partial_t\omega-\mu_1\Delta\omega
   -\mu_2\nabla\dive\omega+\gamma\nabla\varrho=0,\\
  &(\varrho,\omega)|_{t=0}=(\varrho_0,\omega_0).
 \end{aligned}
 \right.
\end{equation}
We denote its solution by
$\mathbb S(t)(\varrho_0,\omega_0)$. Taking the Fourier transform of
\eqref{G4.3} gives
\begin{equation}\label{G4.4}
 \left\{
 \begin{aligned}
  &\partial_t\widehat\varrho
   +i\gamma\xi\cdot\widehat\omega=0,\\
  &\partial_t\widehat\omega
   +\mu_1|\xi|^2\widehat\omega
   +\mu_2\xi(\xi\cdot\widehat\omega)
   +i\gamma\xi\widehat\varrho=0.
 \end{aligned}
 \right.
\end{equation}
The transverse component satisfies
\begin{align*}
 \partial_t\left(I-\frac{\xi\otimes\xi}{|\xi|^2}\right)
 \widehat\omega
 +\mu_1|\xi|^2
 \left(I-\frac{\xi\otimes\xi}{|\xi|^2}\right)
 \widehat\omega=0.
\end{align*}
The characteristic polynomial of the longitudinal part is
\begin{align*}
 \lambda^2+(\mu_1+\mu_2)|\xi|^2\lambda
 +\gamma^2|\xi|^2=0,
\end{align*}
with roots
\begin{align*}
 \lambda_{\pm}(\xi)
 =-\frac{(\mu_1+\mu_2)|\xi|^2}{2}
 \pm\frac12\sqrt{(\mu_1+\mu_2)^2|\xi|^4
 -4\gamma^2|\xi|^2}.
\end{align*}
A direct energy calculation for \eqref{G4.4} gives
\begin{align}\label{G4.8}
 &\frac12\frac{\mathrm d}{\mathrm dt}
 \big(|\widehat\varrho|^2+|\widehat\omega|^2\big)
 +\mu_1|\xi|^2|\widehat\omega|^2
 +\mu_2|\xi\cdot\widehat\omega|^2=0,
\end{align}
and
\begin{align}\label{G4.9}
 &\frac{\mathrm d}{\mathrm dt}
 \operatorname{Re}(i\xi\widehat\varrho\mid\widehat\omega)
 +\gamma|\xi|^2|\widehat\varrho|^2=
 \gamma|\xi\cdot\widehat\omega|^2
 -(\mu_1+\mu_2)|\xi|^2
 \operatorname{Re}(i\xi\widehat\varrho\mid\widehat\omega).
\end{align}
With $\eta_3$ fixed in \eqref{G4.1a}, set
\begin{align*}
 \mathfrak F(\xi,t):={}&
 |\widehat\varrho(\xi,t)|^2
 +|\widehat\omega(\xi,t)|^2+\frac{\eta_3}{1+|\xi|^2}
 \operatorname{Re}
 (i\xi\widehat\varrho(\xi,t)\mid\widehat\omega(\xi,t)).
\end{align*}
Since
\begin{align*}
 \frac{|\xi|}{1+|\xi|^2}
 |\widehat\varrho||\widehat\omega|
 \leq\frac14
 \big(|\widehat\varrho|^2+|\widehat\omega|^2\big),
\end{align*}
we have
\begin{align*}
 \mathfrak F(\xi,t)\sim
 |\widehat\varrho(\xi,t)|^2
 +|\widehat\omega(\xi,t)|^2.
\end{align*}
Multiplying \eqref{G4.9} by
$\eta_3(1+|\xi|^2)^{-1}$ and adding the resulting inequality to twice
\eqref{G4.8}, we obtain
\begin{align*}
 \frac{\mathrm d}{\mathrm dt}\mathfrak F(\xi,t)
 +\lambda_4\frac{|\xi|^2}{1+|\xi|^2}
 \mathfrak F(\xi,t)
 \leq0.
\end{align*}
Indeed,
\begin{align*}
 \frac{\eta_3\gamma}{1+|\xi|^2}
 |\xi\cdot\widehat\omega|^2
 \leq&\,
 \frac{\mu_1}{4}|\xi|^2|\widehat\omega|^2,\\
 \frac{\eta_3(\mu_1+\mu_2)|\xi|^2}{1+|\xi|^2}
 \big|
 \operatorname{Re}(i\xi\widehat\varrho\mid\widehat\omega)
 \big|
 \leq&\,
 \frac{\eta_3\gamma}{4}
 \frac{|\xi|^2}{1+|\xi|^2}|\widehat\varrho|^2
 +\frac{\mu_1}{4}|\xi|^2|\widehat\omega|^2,
\end{align*}
provided $\eta_3$ is sufficiently small. Hence,
\begin{align}\label{G4.14}
 \big|
 \widehat{\mathbb S(t)(\varrho_0,\omega_0)}(\xi)
 \big|
 \lesssim
 \exp\left(
 -\frac{\lambda_4|\xi|^2t}{2(1+|\xi|^2)}
 \right)
 |(\widehat\varrho_0,\widehat\omega_0)(\xi)|.
\end{align}

Based on this frequency-wise exponential dissipation, we now establish the global low-frequency time-decay estimates for $\mathbb S(t)$ in the following lemma.
\begin{lem}[Low-frequency semigroup estimate]\label{L4.1}
Let $s\in[-\frac32,-1)$ and $k=0,1$. Then
\begin{align}\label{G4.15}
 \|\nabla^k[
 \mathbb S(t)(\varrho_0,\omega_0)]^L\|_{L^2}
 \lesssim
 (1+t)^{-\frac{k-s}{2}}
 \|(\varrho_0,\omega_0)\|_{\dot B^s_{2,\infty}}.
\end{align}
Moreover, for any source term $(F_1,F_2)$,
\begin{align}\label{G4.16}
 \left\|
 \nabla^k\left[
 \int_0^t
 \mathbb S(t-\tau)(F_1,F_2)(\tau)\,\mathrm d\tau
 \right]^L
 \right\|_{L^2}
 \lesssim
 \int_0^t
 (1+t-\tau)^{-\frac{k-s}{2}}
 \|(F_1,F_2)^L(\tau)\|_{\dot B^s_{2,\infty}}
 \,\mathrm d\tau.
\end{align}
\end{lem}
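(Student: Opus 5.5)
The plan is to work entirely on the Fourier side with the pointwise bound \eqref{G4.14}. Since the low-frequency cutoff $\Phi_0$ is supported in $|\xi|\leq r_0$, we have $\frac{|\xi|^2}{1+|\xi|^2}\geq \frac{|\xi|^2}{1+r_0^2}$ on its support. Hence, for some $c_0>0$ depending on $\lambda_4$ and $r_0$,
\begin{align*}
 |\xi|^k\Phi_0(\xi)\big|\widehat{\mathbb S(t)(\varrho_0,\omega_0)}(\xi)\big|
 \lesssim |\xi|^k e^{-c_0|\xi|^2t}\,\Phi_0(\xi)|(\widehat\varrho_0,\widehat\omega_0)(\xi)|.
\end{align*}

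By Plancherel, we then split $\Phi_0$ into dyadic shells. Almost orthogonality gives
\begin{align*}
 \|\nabla^k[\mathbb S(t)(\varrho_0,\omega_0)]^L\|_{L^2}^2
 \lesssim \sum_{j\leq j_0} 2^{2kj}e^{-c_12^{2j}t}\|\dot\Delta_j(\varrho_0,\omega_0)\|_{L^2}^2 ,
\end{align*}
where $2^{j_0}\sim r_0$ and the shell support is used to replace $|\xi|$ by $2^j$ in the exponent. Bounding each block by $2^{-sj}\|(\varrho_0,\omega_0)\|_{\dot B^s_{2,\infty}}$ reduces everything to the scalar sum
\begin{align*}
 \sum_{j\leq j_0}2^{2(k-s)j}e^{-c_12^{2j}t}.
\end{align*}

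The main point is this sum, since $\dot B^s_{2,\infty}$ only controls single blocks and the sum over $j$ must be handled by hand. The exponent satisfies $k-s>0$ for $k=0,1$ and $s<-1$, so the sum is a geometric-type series dominated by its largest terms.

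For $t\leq1$ I would drop the exponential and sum the convergent geometric series over $j\leq j_0$. This gives a constant bound.

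For $t\geq1$ I would split at $2^{2j}\sim t^{-1}$. Terms with $2^{2j}\leq t^{-1}$ form a geometric series bounded by $t^{-(k-s)}$. Terms with $2^{2j}t\geq1$ are bounded using $x^{k-s}e^{-c_1x}\lesssim e^{-c_1x/2}$ with $x=2^{2j}t$, which again sums to $\lesssim t^{-(k-s)}$. Taking square roots yields \eqref{G4.15} with the factor $(1+t)^{-\frac{k-s}{2}}$.

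For \eqref{G4.16}, I would use Minkowski's inequality to move the $L^2$ norm inside the time integral. Since $\Phi_0$ commutes with $\mathbb S(t-\tau)$, the integrand is $\nabla^k[\mathbb S(t-\tau)(F_1,F_2)^L(\tau)]^L$. Applying the pointwise-in-time estimate just proved with initial datum $(F_1,F_2)^L(\tau)$ and elapsed time $t-\tau$ then gives the claim. A lower frequency cutoff slightly larger than $\Phi_0$ could be used if one prefers not to apply $\Phi_0$ twice, but this is unnecessary because $\Phi_0$ is a bounded multiplier.
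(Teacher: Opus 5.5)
Your proposal is correct and follows the paper's proof essentially step for step: you use the pointwise bound \eqref{G4.14} restricted to $|\xi|\leq r_0$, dyadic localization with one factor $2^{-sj}\|(\varrho_0,\omega_0)\|_{\dot B^s_{2,\infty}}$ per block, the scalar bound $\sum_{j\leq j_0}2^{2(k-s)j}e^{-c2^{2j}t}\lesssim(1+t)^{-(k-s)}$ (which you verify explicitly, whereas the paper only asserts it), and Minkowski's inequality for the Duhamel term. One small correction: the integrand is $\nabla^k\mathbb S(t-\tau)(F_1,F_2)^L(\tau)$, not $\nabla^k[\mathbb S(t-\tau)(F_1,F_2)^L(\tau)]^L$, since $\Phi_0^2\neq\Phi_0$; however, your Plancherel argument uses the outer cutoff only through the Fourier support in $\{|\xi|\leq r_0\}$, so it bounds this integrand directly by $(1+t-\tau)^{-\frac{k-s}{2}}\|(F_1,F_2)^L(\tau)\|_{\dot B^s_{2,\infty}}$.
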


\begin{proof}
Let $j_0$ be fixed by the support of $\Phi_0$ in \eqref{G2.2}. From
\eqref{G4.14}, for $j\leq j_0$,
\begin{align*}
 \|\dot\Delta_j
 \mathbb S(t)(\varrho_0,\omega_0)\|_{L^2}
 \lesssim
 e^{-\lambda_4 2^{2j}t/4}
 \|\dot\Delta_j(\varrho_0,\omega_0)\|_{L^2}.
\end{align*}
Therefore,
\begin{align*}
 \|\nabla^k[
 \mathbb S(t)(\varrho_0,\omega_0)]^L\|_{L^2}^2
 \lesssim&\,
 \sum_{j\leq j_0}
 2^{2kj}e^{-\lambda_4 2^{2j}t/2}
 \|\dot\Delta_j(\varrho_0,\omega_0)\|_{L^2}^2\\
 \lesssim&\,
 \|(\varrho_0,\omega_0)\|_{\dot B^s_{2,\infty}}^2
 \sum_{j\leq j_0}
 2^{2(k-s)j}e^{-\lambda_4 2^{2j}t/2}\\
 \lesssim&\,
 (1+t)^{-(k-s)}
 \|(\varrho_0,\omega_0)\|_{\dot B^s_{2,\infty}}^2,
\end{align*}
where we use
\begin{align*}
 \sum_{j\leq j_0}
 2^{2(k-s)j}e^{-\lambda_4 2^{2j}t/2}
 \lesssim
 (1+t)^{-(k-s)}.
\end{align*}
The estimate \eqref{G4.16} follows from \eqref{G4.15} and Minkowski's
inequality.
\end{proof}
A dyadic splitting at frequency one gives
\begin{align*}
 \|\phi\|_{\dot B^{\frac32}_{2,1}}
 \lesssim_s
 \|\phi\|_{\dot B^{s+\frac32}_{2,\infty}}
 +\|\phi\|_{\dot B^{\frac92}_{2,1}}
 \lesssim\varepsilon,
\end{align*}
which in turn allows the $\sigma=0$ endpoint composition estimate  in Lemma~\ref{L2.2} to be applied when $s=-\frac32$. In addition, the stationary relation \eqref{A1},
the composition estimate in Lemma~\ref{L2.2}, and \eqref{G1.9} imply
\begin{align}\label{G4.20}
 &\left\|\left(
 \bar\rho,\frac{\bar\rho}{\rho_*},
 \bar\rho p_1(\bar\rho)\right)\right\|_{
 \dot B^{s+\frac32}_{2,\infty}\cap\dot B^{\frac92}_{2,1}}
 \lesssim\varepsilon,
\end{align}
and, by interpolation,
\begin{align*}
 &\left\|\left(
 \bar\rho,\frac{\bar\rho}{\rho_*},
 \bar\rho p_1(\bar\rho)\right)\right\|_{
 \dot B^{\frac32}_{2,1}\cap\dot B^{\frac52}_{2,1}}
 +\left\|\left(
 \bar\rho,\frac{\bar\rho}{\rho_*},
 \bar\rho p_1(\bar\rho)\right)\right\|_{W^{3,\infty}}
 \lesssim\varepsilon.
\end{align*}
We next give the low-frequency product estimate used below.

\begin{lem}[Low-frequency stationary products]\label{L4.2}
Under \eqref{G4.20},
\begin{align}
 \|[\dive(\bar\rho\,\omega)]^L\|_{\dot B^s_{2,\infty}}
 &\lesssim\varepsilon\|\nabla\omega\|_{L^2},\label{G4.22}\\
 \left\|\left[
 \frac{\bar\rho}{\rho_*}\Delta\omega
 \right]^L\right\|_{\dot B^s_{2,\infty}}
 +\left\|\left[
 \frac{\bar\rho}{\rho_*}\nabla\dive\omega
 \right]^L\right\|_{\dot B^s_{2,\infty}}
 &\lesssim\varepsilon\|\nabla\omega\|_{L^2},\label{G4.22.1}\\
\label{G4.24}
 \|[\nabla(\bar\rho p_1(\bar\rho)\varrho)]^L\|_{
 \dot B^s_{2,\infty}}
 &\lesssim\varepsilon\|\nabla\varrho\|_{L^2}.
\end{align}
\end{lem}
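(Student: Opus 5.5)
The plan is to treat all three estimates as instances of one product bound: a stationary coefficient $g\in\dot B^{s+\frac32}_{2,\infty}\cap\dot B^{\frac32}_{2,1}$ times a factor in $\dot H^0$ or $\dot H^1$. The divergence or gradient in front is then absorbed through the low-frequency cutoff. Two structural facts do the work. Since $\Phi_0$ is compactly supported, $\|[\nabla f]^L\|_{\dot B^s_{2,\infty}}\lesssim\|f\|_{\dot B^{s}_{2,\infty}}$, and more generally $\|[f]^L\|_{\dot B^s_{2,\infty}}\lesssim\|f\|_{\dot B^{s'}_{2,\infty}}$ for any $s'\leq s$. Moreover, $s+\frac32\in[0,\frac12)$, so the product estimates of Lemma~\ref{L2.1} are available in the negative-regularity range.

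For \eqref{G4.22} I will write $\dive(\bar\rho\omega)$ in divergence form, drop the derivative at low frequency, and estimate $\|\bar\rho\omega\|_{\dot B^{s}_{2,\infty}}$. I will apply Lemma~\ref{L2.1} with $s_1=s+\frac32$ (on $\bar\rho$) and $s_2=\frac32-\frac32\cdot$ adjusted so that $s_1+s_2-\frac32=s$, i.e.\ $s_2=0$, which would put $\omega$ in $L^2$ — not what the statement wants. Instead I will keep one derivative: $\dive(\bar\rho\omega)=\nabla\bar\rho\cdot\omega+\bar\rho\dive\omega$.
\begin{itemize}
\item For $\bar\rho\dive\omega$, I use the second estimate of Lemma~\ref{L2.1} with $f=\bar\rho\in\dot B^{s_1}_{2,1}$ and $g=\dive\omega\in\dot B^0_{2,\infty}$, giving $\dot B^{s_1-\frac32}_{2,\infty}$. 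This requires $s_1=s+\frac32$ in $\dot B_{2,1}$. Since only $\dot B^{s+\frac32}_{2,\infty}$ is assumed, I instead take $s_1=s+\frac32+\theta$ for a small $\theta>0$. That is controlled by interpolating between $\dot B^{s+\frac32}_{2,\infty}$ and $\dot B^{\frac92}_{2,1}$, and it lands in $\dot B^{s+\theta}_{2,\infty}$. The loss of $\theta$ is harmless because low frequencies embed $\dot B^{s+\theta}_{2,\infty}$ into $\dot B^{s}_{2,\infty}$.
\item For $\nabla\bar\rho\cdot\omega$, I write $\omega\in\dot H^1\hookrightarrow \dot B^{1}_{2,\infty}$ and $\nabla\bar\rho\in\dot B^{s+\frac12+\theta}_{2,1}$. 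The regularity indices satisfy $s_1<\frac32$, $s_2=1<\frac32$, and $s_1+s_2=s+\frac32+\theta\geq0$. The output is $\dot B^{s+\theta}_{2,\infty}$, handled as above.
\end{itemize}
In both cases only $\|\nabla\omega\|_{L^2}$ appears.

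The estimates \eqref{G4.22.1} and \eqref{G4.24} follow the same pattern. For $\frac{\bar\rho}{\rho_*}\Delta\omega$, I write $g\,\partial_i\partial_j\omega=\partial_i(g\partial_j\omega)-\partial_i g\,\partial_j\omega$, with $g=\bar\rho/\rho_*$. The first term is a divergence and reduces to $g\nabla\omega$, estimated exactly like $\bar\rho\dive\omega$. The second term, $\nabla g\cdot\nabla\omega$, is a product of $\nabla g\in\dot B^{s+\frac12+\theta}_{2,1}$ with $\nabla\omega\in\dot B^0_{2,\infty}$, giving $\dot B^{s-1+\theta}_{2,\infty}$, which at low frequency embeds into $\dot B^s_{2,\infty}$. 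For \eqref{G4.24}, I drop the gradient and then estimate $g\varrho$ with $g=\bar\rho p_1(\bar\rho)$ and $\varrho\in\dot H^1$, as in the $\nabla\bar\rho\cdot\omega$ case but with one more unit of room.

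The main obstacle is the endpoint bookkeeping. When $s=-\frac32$, the condition $s_1+s_2\geq0$ becomes tight: for instance $\nabla g\cdot\nabla\omega$ has $s_1+s_2=\theta-1<0$. The fix is to use the divergence form more aggressively, pulling both derivatives out via $g\Delta\omega=\dive(g\nabla\omega)-\nabla g\cdot\nabla\omega$ and then $\nabla g\cdot\nabla\omega=\dive(\omega\otimes\nabla g)-\omega\Delta g$. The low cutoff absorbs derivatives, and the remaining products pair $\omega\in\dot B^1_{2,\infty}$ with $g$ or $\nabla^2g$, where the coefficient is given enough positive regularity (up to $\dot B^{\frac32}_{2,1}$, available from \eqref{G4.20}) to make $s_1+s_2\geq0$. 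I will use the $L^\infty$-type endpoint $s_1=\frac32$ allowed in the second part of Lemma~\ref{L2.1} whenever the coefficient must absorb all the regularity.
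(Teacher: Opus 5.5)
\textbf{There is a genuine gap: the low-frequency index bookkeeping runs the wrong way, and both bullets depend on it.} You state correctly that $\|f^L\|_{\dot B^s_{2,\infty}}\lesssim\|f\|_{\dot B^{s'}_{2,\infty}}$ requires $s'\le s$. But you then call the $\theta$-loss harmless because ``low frequencies embed $\dot B^{s+\theta}_{2,\infty}$ into $\dot B^s_{2,\infty}$''. That is false: $2^{sj}=2^{-\theta j}\,2^{(s+\theta)j}$, and $2^{-\theta j}\to\infty$ as $j\to-\infty$. So upgrading $\bar\rho$ from $\dot B^{s+\frac32}_{2,\infty}$ to $\dot B^{s+\frac32+\theta}_{2,1}$ costs exactly what you cannot pay. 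Your last paragraph's advice to give the coefficient as much regularity as possible (``up to $\dot B^{\frac32}_{2,1}$'') fails for the same reason: more coefficient regularity raises the output index, which is weaker at low frequency.

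The unit you can spend is the one you discarded at the start. An outer derivative gives $\|[\nabla F]^L\|_{\dot B^s_{2,\infty}}\lesssim\|F\|_{\dot B^{s+1}_{2,\infty}}$, not merely $\lesssim\|F\|_{\dot B^{s}_{2,\infty}}$. With this, \eqref{G4.22} and \eqref{G4.24} follow at once from the first part of Lemma~\ref{L2.1} with $r_1=r_2=\infty$, $s_1=s+\frac32$, $s_2=1$ (so $s_1+s_2=s+\frac52>0$): $\|\bar\rho\,\omega\|_{\dot B^{s+1}_{2,\infty}}\lesssim\|\bar\rho\|_{\dot B^{s+\frac32}_{2,\infty}}\|\omega\|_{\dot B^1_{2,\infty}}\lesssim\varepsilon\|\nabla\omega\|_{L^2}$, and likewise for $\bar\rho p_1(\bar\rho)\varrho$.

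For \eqref{G4.22.1}, your double integration by parts does work once the indices are chosen minimally rather than maximally. Write $g\,\partial_i\partial_k\omega=\partial_i(g\,\partial_k\omega)-\partial_k(\omega\,\partial_ig)+\omega\,\partial_i\partial_kg$. Apply the second part of Lemma~\ref{L2.1} with $g\in\dot B^{s+\frac52}_{2,1}$, $\nabla g\in\dot B^{s+\frac32}_{2,1}$, $\nabla^2g\in\dot B^{s+\frac12}_{2,1}$, all $\lesssim\varepsilon$ by interpolating \eqref{G4.20}. This puts the first two products in $\dot B^{s+1}_{2,\infty}$ and the last in $\dot B^{s}_{2,\infty}$. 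Every index condition holds because $s\le-1$ and $s+\frac32\ge0$.

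\textbf{Second, splitting $\dive(\bar\rho\omega)=\nabla\bar\rho\cdot\omega+\bar\rho\dive\omega$ is fatal at $s=-\frac32$, whatever exponents you choose.} There \eqref{G4.20} gives only $\bar\rho\in\dot B^0_{2,\infty}$ at low frequencies, and neither piece obeys the bound. Take $\psi_j$ as in the introduction, normalized with $\|\psi\|_{L^2}=1$ and with the annulus thin enough that the $\psi_{3k}$ have disjoint Fourier supports. Let $\bar\rho=\varepsilon\sum_{k\le0}\psi_{3k}$, which is admissible, and choose $\omega$ with $\dive\omega=N^{-\frac12}\sum_{-N\le k\le0}\psi_{3k}$. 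Then $\|\nabla\omega\|_{L^2}\lesssim1$. But $\widehat{\bar\rho\dive\omega}$ is continuous near $0$ with value $\varepsilon N^{-\frac12}(N+1)$ there. Hence $2^{-\frac32j}\|\dot\Delta_j(\bar\rho\dive\omega)\|_{L^2}\gtrsim\varepsilon N^{\frac12}$ for $j$ very negative, and $\nabla\bar\rho\cdot\omega$ has the opposite zero mode. Only the sum survives, because the outer divergence annihilates the zero mode. In Bony terms, the outer derivative on the remainder contributes $2^{j}$, whereas $R(\bar\rho,\dive\omega)$ only contributes $2^{\ell}$ with $\ell\ge j-3$. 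This is why the paper keeps $\nabla(af)$ intact and estimates its paraproducts and remainder directly for each $j\le j_0$. For $s>-\frac32$ your split could be salvaged with the $\ell^\infty$--$\ell^\infty$ case of Lemma~\ref{L2.1} and no $\theta$-shift.
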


\begin{proof}
Let $a$ be a scalar coefficient satisfying
\begin{align}\label{G4.25}
 \|a\|_{\dot B^{s+\frac32}_{2,\infty}
 \cap\dot B^{\frac92}_{2,1}}\leq\varepsilon.
\end{align}
We first prove that, for any sufficiently regular scalar-valued
function $f$,
\begin{align}\label{G4.26}
 \|[\nabla(af)]^L\|_{\dot B^s_{2,\infty}}
 +\|[a\nabla^2f]^L\|_{\dot B^s_{2,\infty}}
 \lesssim\varepsilon\|\nabla f\|_{L^2}.
\end{align}
For a vector-valued function, \eqref{G4.26} is understood
componentwise.

By Bony's decomposition,
\begin{align*}
 af=T_af+T_fa+R(a,f).
\end{align*}
For $j\leq j_0$, \eqref{G4.25} gives
\begin{align*}
 \|\dot S_{j-1}a\|_{L^\infty}
 &\leq
 \sum_{\ell<j-1}2^{\frac32\ell}
 \|\dot\Delta_\ell a\|_{L^2}
 \lesssim
 2^{-sj}\|a\|_{\dot B^{s+\frac32}_{2,\infty}},\\
 \|\dot S_{j-1}f\|_{L^\infty}
 &\lesssim
 \sum_{\ell<j-1}2^{\frac32\ell}
 \|\dot\Delta_\ell f\|_{L^2}
 \lesssim2^{\frac12j}\|\nabla f\|_{L^2},\\
 \|\dot S_{j-1}\nabla^2f\|_{L^\infty}
 &\lesssim
 \sum_{\ell<j-1}2^{\frac72\ell}
 \|\dot\Delta_\ell f\|_{L^2}
 \lesssim2^{\frac52j}\|\nabla f\|_{L^2}.
\end{align*}
Consequently,
\begin{align}
 2^{sj}\|\dot\Delta_j\nabla(T_af)\|_{L^2}
 \lesssim&\,
{\varepsilon\|\nabla f\|_{L^2}}\label{G4.31}
\\
 2^{sj}\|\dot\Delta_j\nabla(T_fa)\|_{L^2}
 \lesssim&\,
 \varepsilon\|\nabla f\|_{L^2},\label{G4.31.1}\\
 2^{sj}\|\dot\Delta_j(T_a\nabla^2f)\|_{L^2}
 +2^{sj}\|\dot\Delta_j(T_{\nabla^2f}a)\|_{L^2}
 \lesssim&\,
 \varepsilon\|\nabla f\|_{L^2}.\label{G4.31.2}
\end{align}
For the remainders, the Fourier support properties guarantee that
\begin{align}
 2^{sj}\|\dot\Delta_j\nabla R(a,f)\|_{L^2}
 \lesssim&\,
 2^{(s+\frac52)j}
 \sum_{\ell\geq j-3}
 \|\dot\Delta_\ell a\|_{L^2}
 \|\widetilde{\dot\Delta}_\ell f\|_{L^2}\nonumber\\
 \lesssim&\,
 {\varepsilon
 \sum_{\ell\geq j-3
 }
 2^{(s+\frac52)(j-\ell)}
 2^\ell\|\widetilde{\dot\Delta}_\ell f\|_{L^2}}
 \lesssim\varepsilon\|\nabla f\|_{L^2},\label{G4.35.1}
\end{align}
and
\begin{align}\label{G4.35}
 2^{sj}\|\dot\Delta_jR(a,\nabla^2f)\|_{L^2}
 \lesssim&\,
 2^{(s+\frac32)j}
 \sum_{\ell\geq j-3}
 \|\dot\Delta_\ell a\|_{L^2}
 2^{2\ell}\|\widetilde{\dot\Delta}_\ell f\|_{L^2}\nonumber\\
 \lesssim&\,
 \varepsilon\left(
 \sum_{j-3\leq\ell\leq0}
 2^{(s+\frac32)(j-\ell)}2^\ell
 2^\ell\|\widetilde{\dot\Delta}_\ell f\|_{L^2}
 +\sum_{\ell\geq1}2^{-2\ell}
 2^\ell\|\widetilde{\dot\Delta}_\ell f\|_{L^2}
 \right)\nonumber\\
 \lesssim&\,\varepsilon\|\nabla f\|_{L^2}.
\end{align}
At the endpoint $s=-\frac32$, the first sum in \eqref{G4.35}
is bounded by Cauchy--Schwarz's inequality and
$\sum\limits_{\ell\leq0}2^{2\ell}<\infty$. Taking the supremum over
$j\leq j_0$ in \eqref{G4.31}--\eqref{G4.35} yields
\eqref{G4.26}.

By \eqref{G4.20}, the coefficients $ \bar\rho$, ${\bar\rho}/{\rho_*}$ and $\bar\rho p_1(\bar\rho)$
all satisfy \eqref{G4.25}. Applying the first estimate in
\eqref{G4.26} with $a=\bar\rho$ and $f=\omega_i$,
$i=1,2,3$, and summing over $i$, we obtain
\begin{align*}
 \|[\dive(\bar\rho\,\omega)]^L\|_{\dot B^s_{2,\infty}}
 &\leq
 \sum_{i=1}^3
 \|[\partial_i(\bar\rho\,\omega_i)]^L\|_{
 \dot B^s_{2,\infty}}\\
 &\lesssim
 \varepsilon\sum_{i=1}^3\|\nabla\omega_i\|_{L^2}
 \lesssim
 \varepsilon\|\nabla\omega\|_{L^2}.
\end{align*}
This proves the estimate in \eqref{G4.22}.

Next, apply the second estimate in \eqref{G4.26} with
$a=\bar\rho/\rho_*$ and $f=\omega_i$. Since every component of
$\Delta\omega$ and $\nabla\dive\omega$ is a linear combination of
second derivatives of the components $\omega_i$, summing the
componentwise estimates gives
\begin{align*}
 \left\|\left[
 \frac{\bar\rho}{\rho_*}\Delta\omega
 \right]^L\right\|_{\dot B^s_{2,\infty}}
 +\left\|\left[
 \frac{\bar\rho}{\rho_*}\nabla\dive\omega
 \right]^L\right\|_{\dot B^s_{2,\infty}}
 \lesssim
 \varepsilon\|\nabla\omega\|_{L^2}.
\end{align*}
The estimate in \eqref{G4.22.1} is thus proved.

Finally, applying the first estimate in \eqref{G4.26} with
\begin{align*}
 a=\bar\rho p_1(\bar\rho),
 \qquad
 f=\varrho,
\end{align*}
yields
\begin{align*}
 \|[\nabla(\bar\rho p_1(\bar\rho)\varrho)]^L\|_{\dot B^s_{2,\infty}}
 \lesssim
 \varepsilon\|\nabla\varrho\|_{L^2},
\end{align*}
which is \eqref{G4.24}.  The proof of Lemma \ref{L4.2} is completed.
\end{proof}
\subsection{Negative Besov norm and zeroth-order decay}

We first propagate the negative Besov norm. For $j\in\mathbb Z$,
applying $\dot\Delta_j$ to \eqref{I3}$_1$ and \eqref{I3}$_2$ gives
\begin{equation}\label{G4.18.1}
 \left\{
 \begin{aligned}
  &\partial_t\dot\Delta_j\varrho
   +\gamma\dive\dot\Delta_j\omega
   =\dot\Delta_j\mathcal N_1,\\
  &\partial_t\dot\Delta_j\omega
   -\mu_1\Delta\dot\Delta_j\omega
   -\mu_2\nabla\dive\dot\Delta_j\omega
   +\gamma\nabla\dot\Delta_j\varrho
   =\dot\Delta_j\mathcal N_2.
 \end{aligned}
 \right.
\end{equation}
Taking the $L^2$ inner products of $\eqref{G4.18.1}_1$ with
$\dot\Delta_j\varrho$ and of $\eqref{G4.18.1}_2$ with $\dot\Delta_j\omega$ yields
\begin{align}\label{G4.37}
 &\frac12\frac{\mathrm d}{\mathrm dt}
 \left(\|\dot\Delta_j\varrho\|_{L^2}^2
 +\|\dot\Delta_j\omega\|_{L^2}^2\right)
 +\mu_1\|\nabla\dot\Delta_j\omega\|_{L^2}^2
 +\mu_2\|\dive\dot\Delta_j\omega\|_{L^2}^2\nonumber\\
 =&
 \langle\dot\Delta_j\mathcal N_1,
 \dot\Delta_j\varrho\rangle
 +\langle\dot\Delta_j\mathcal N_2,
 \dot\Delta_j\omega\rangle.
\end{align}
Moreover,
\begin{align}\label{G4.38}
 &\frac{\mathrm d}{\mathrm dt}
 \langle\nabla\dot\Delta_j\varrho,
 \dot\Delta_j\omega\rangle
 +\gamma\|\nabla\dot\Delta_j\varrho\|_{L^2}^2\nonumber\\
 =&
 \gamma\|\dive\dot\Delta_j\omega\|_{L^2}^2
 +\mu_1\langle\nabla\dot\Delta_j\varrho,
 \Delta\dot\Delta_j\omega\rangle
 +\mu_2\langle\nabla\dot\Delta_j\varrho,
 \nabla\dive\dot\Delta_j\omega\rangle\nonumber\\
 &+
 \langle\nabla\dot\Delta_j\mathcal N_1,
 \dot\Delta_j\omega\rangle
 +\langle\nabla\dot\Delta_j\varrho,
 \dot\Delta_j\mathcal N_2\rangle.
\end{align}
Define
\begin{align}\label{G4.39}
 \mathscr E_j(t):={}&
 \|\dot\Delta_j\varrho(t)\|_{L^2}^2
 +\|\dot\Delta_j\omega(t)\|_{L^2}^2
 +\frac{\eta_3}{1+2^{2j}}
 \langle\nabla\dot\Delta_j\varrho(t),
 \dot\Delta_j\omega(t)\rangle.
\end{align}
For $\eta_3$ sufficiently small,
\begin{align}\label{G4.40}
 \mathscr E_j(t)\sim
 \|\dot\Delta_j(\varrho,\omega)(t)\|_{L^2}^2.
\end{align}
Combining \eqref{G4.37} and \eqref{G4.38}, and using Bernstein's
inequality, gives
\begin{align}\label{G4.41}
 &\frac{\mathrm d}{\mathrm dt}\mathscr E_j(t)
 +2\lambda_4\left(
 2^{2j}\|\dot\Delta_j\omega\|_{L^2}^2
 +\frac{2^{2j}}{1+2^{2j}}
 \|\dot\Delta_j\varrho\|_{L^2}^2\right)
 \leq|\mathscr Q_j(t)|,
\end{align}
where
\begin{align}\label{G4.42}
 \mathscr Q_j:={}&
 2\langle\dot\Delta_j\mathcal N_1,
 \dot\Delta_j\varrho\rangle
 +2\langle\dot\Delta_j\mathcal N_2,
 \dot\Delta_j\omega\rangle\nonumber\\
 &+\frac{\eta_3}{1+2^{2j}}
 \langle\nabla\dot\Delta_j\mathcal N_1,
 \dot\Delta_j\omega\rangle
 +\frac{\eta_3}{1+2^{2j}}
 \langle \dot\Delta_j\mathcal N_2,\nabla\dot\Delta_j\varrho
\rangle.
\end{align}
Using \eqref{G3.19}, \eqref{G3.22}, and the identity
\begin{align*}
 h(\varrho,\bar\rho)-\frac{\bar\rho}{\rho_*}
 =\frac{\rho_\infty\varrho}{
 \rho_*(\varrho+\rho_*)},
\end{align*}
we write \eqref{I4} exactly as
\begin{align}\label{G4.44}
 \mathcal N_1={}&
 -\frac{\mu_1\gamma}{\mu}\dive(\bar\rho\omega)
 -\frac{\mu_1\gamma}{\mu}\dive(\varrho\omega),\\
\label{G4.45}
 \mathcal N_2=&
 -\mu_1\frac{\bar\rho}{\rho_*}\Delta\omega
 -\mu_2\frac{\bar\rho}{\rho_*}\nabla\dive\omega
 -\frac{\mu}{\mu_1\gamma}
 \nabla\big(\bar\rho p_1(\bar\rho)\varrho\big)\nonumber\\
 &-\frac{\mu_1\gamma}{\mu}\omega\cdot\nabla\omega-\mu_1\frac{\rho_\infty\varrho}{
 \rho_*(\varrho+\rho_*)}\Delta\omega
 -\mu_2\frac{\rho_\infty\varrho}{
 \rho_*(\varrho+\rho_*)}\nabla\dive\omega\nonumber\\
 &-\frac{\mu}{\mu_1\gamma}
 \nabla\big(\varrho^2p_2(\varrho,\bar\rho)\big).
\end{align}
Let $\mathscr Q_j^{(1)}$ denote the terms in \eqref{G4.42} that are
linear in the stationary profile. We Apply Lemmas~\ref{L2.1} and
\ref{L2.6} to each nonlinear
product, then assemble the finitely many measurable
$\ell^2$ frequency envelopes. This yields a nonnegative measurable
sequence $(b_j(t))_{j\in\mathbb Z}$ satisfying
$\sum\limits_{j\in\mathbb Z} b_j(t)^2\leq1$, such that
\begin{align}
 2^{sj}\|\dot\Delta_j\dive(\varrho\omega)\|_{L^2}
 +2^{sj}\|\dot\Delta_j(\omega\cdot\nabla\omega)\|_{L^2}&\,\lesssim
 b_j(t)\,\sqrt{\mathcal D_3(t)}
 \|(\varrho,\omega)(t)\|_{\dot B^s_{2,\infty}},\label{G4.46}\\
 2^{sj}\left\|\dot\Delta_j\left(
 \frac{\rho_\infty\varrho}{\rho_*(\varrho+\rho_*)}
 (\Delta\omega,\nabla\dive\omega)\right)\right\|_{L^2}&\,\lesssim
 b_j(t)\,\sqrt{\mathcal D_3(t)}
 \|(\varrho,\omega)(t)\|_{\dot B^s_{2,\infty}},\label{G4.46.1}\\
 2^{sj}\left\|\dot\Delta_j\nabla
 \big(\varrho^2p_2(\varrho,\bar\rho)\big)\right\|_{L^2}&\,\lesssim
 b_j(t)\,\sqrt{\mathcal D_3(t)}
 \|(\varrho,\omega)(t)\|_{\dot B^s_{2,\infty}}.\label{G4.46.2}
\end{align}
For the viscosity products, we use
\begin{align}\label{G4.49}
 \dot\Delta_j(a\Delta\omega)
 =a\Delta\dot\Delta_j\omega
 +[\dot\Delta_j,a]\Delta\omega,
\end{align}
followed by
\begin{align}\label{G4.50}
 \langle a\Delta\dot\Delta_j\omega,
 \dot\Delta_j\omega\rangle
 =-\int_{\mathbb R^3}a|\nabla\dot\Delta_j\omega|^2\,\mathrm dx
 -\int_{\mathbb R^3}\nabla a\cdot\nabla\dot\Delta_j\omega
 \dot\Delta_j\omega\,\mathrm dx,
\end{align}
and
\begin{align}
 \left|\int_{\mathbb R^3}\nabla a\cdot
 \nabla\dot\Delta_j\omega\dot\Delta_j\omega\,\mathrm dx\right|
 \leq\|\nabla a\|_{L^3}
 \|\nabla\dot\Delta_j\omega\|_{L^2}
 \|\dot\Delta_j\omega\|_{L^6}
 \lesssim\|\nabla a\|_{L^3}
 \|\nabla\dot\Delta_j\omega\|_{L^2}^2.\label{G4.50.1}
\end{align}
The commutator term in \eqref{G4.49} satisfies
\begin{align}\label{G4.52}
 &2^{2sj}
 \big|\langle[\dot\Delta_j,a]\Delta\omega,
 \dot\Delta_j\omega\rangle\big|\leq
 \frac{\lambda_4}{64}2^{2sj+2j}
 \|\dot\Delta_j\omega\|_{L^2}^2
 +C b_j(t)^2\mathcal D_3(t)
 \|(\varrho,\omega)(t)\|_{\dot B^s_{2,\infty}}^2.
\end{align}
Here the coefficient $a$ stands either for the solution-dependent coefficient in the second line of \eqref{G4.45} or one of the stationary coefficients in the first line. For a stationary coefficient, the low-coefficient/high-solution
paraproduct and the commutator terms are absorbed as in
\eqref{G4.50}--\eqref{G4.52}. The high-coefficient/low-solution
paraproduct has to be retained. Indeed, for $j\leq j_0$, we have
\begin{align*}
 \|\dot S_{j-1}f\|_{L^6}
 &\lesssim\sum_{\ell<j-1}2^\ell
 \|\dot\Delta_\ell f\|_{L^2}
 \lesssim2^{(1-s)j}\|f\|_{\dot B^s_{2,\infty}},\nonumber\\
 \|\dot\Delta_ja\|_{L^3}
 &\lesssim2^{\frac j2}\|\dot\Delta_ja\|_{L^2}
 \lesssim\varepsilon 2^{-(1+s)j}.
\end{align*}
Consequently,
\begin{align}\label{G4.52b}
 2^{sj}\|\dot\Delta_j\nabla(T_fa)\|_{L^2}
 \lesssim\varepsilon 2^{(1-s)j}
 \|f\|_{\dot B^s_{2,\infty}},\qquad
 2^{sj}\|\dot\Delta_j(T_{\nabla^2f}a)\|_{L^2}
 \lesssim\varepsilon 2^{(2-s)j}
 \|f\|_{\dot B^s_{2,\infty}}.
\end{align}
The second inequality in \eqref{G4.52b} has one additional factor
$2^j$ and is bounded by the first one for $j\leq j_0$. Hence,
for $j\leq j_0$,
\begin{align}\label{G4.53}
 2^{2sj}|\mathscr Q_j^{(1)}|
 &\leq\frac{\lambda_4}{8}2^{2sj}
 \left(2^{2j}\|\dot\Delta_j\omega\|_{L^2}^2
 +\frac{2^{2j}}{1+2^{2j}}
 \|\dot\Delta_j\varrho\|_{L^2}^2\right)\nonumber\\
 &\quad+C\varepsilon\mathbf 1_{\{j\leq j_0\}}
 2^{(1-s)j}
 \|(\varrho,\omega)(t)\|_{\dot B^s_{2,\infty}}^2
 +Cb_j(t)^2\mathcal D_3(t)
 \|(\varrho,\omega)(t)\|_{\dot B^s_{2,\infty}}^2.
\end{align}
For $j>j_0$, the $\dot B^{\frac92}_{2,1}$ bound gives, for
any stationary coefficient $a$ in
\eqref{G4.44}--\eqref{G4.45},
\begin{align*}
 \|\dot\Delta_ja\|_{L^2}
 \lesssim\varepsilon 2^{-\frac92j}.
\end{align*}
For $j>j_0$, the low-frequency factor in the
high-coefficient/low-solution paraproduct is instead estimated by
\begin{align*}
 \|\dot S_{j-1}f\|_{L^\infty}
 &\lesssim 2^{(\frac32-s)j}
 \|f\|_{\dot B^s_{2,\infty}},\\
 \|\dot S_{j-1}\nabla^2f\|_{L^\infty}
 &\lesssim 2^{(\frac72-s)j}
 \|f\|_{\dot B^s_{2,\infty}}.
\end{align*}
Hence, by Bernstein's inequalities,
\begin{align*}
2^{sj}\left(
 \|\dot\Delta_j\nabla(T_fa)\|_{L^2}
 +\|\dot\Delta_j(T_{\nabla^2f}a)\|_{L^2}
 \right)\lesssim
 \varepsilon(2^{-2j}+2^{-j})
 \|f\|_{\dot B^s_{2,\infty}}
 \lesssim_{j_0}
 \varepsilon
 \|f\|_{\dot B^s_{2,\infty}},
 \qquad j>j_0.
\end{align*}
Here $f$ denotes either component of $(\varrho,\omega)$. All other nonlinear contributions can be bounded by identical arguments, while the
low-coefficient/high-solution interactions are absorbed by
\eqref{G4.50}--\eqref{G4.52}. Hence
\begin{align}\label{G4.53c}
 2^{2sj}|\mathscr Q_j^{(1)}|
 \leq\frac{\lambda_4}{64}2^{2sj}
 \left(2^{2j}\|\dot\Delta_j\omega\|_{L^2}^2
 +\frac{2^{2j}}{1+2^{2j}}
 \|\dot\Delta_j\varrho\|_{L^2}^2\right)+C\varepsilon
 \|(\varrho,\omega)(t)\|_{\dot B^s_{2,\infty}}^2,
\end{align}
for any $j>j_0$.
The estimate \eqref{G4.53} applies to $j\leq j_0$, and
\eqref{G4.53c} to $j>j_0$.
For the quadratic pressure term, integration by parts and
Lemma~\ref{L2.1} give
\begin{align}\label{G4.55}
 &2^{2sj}\big|\langle\dot\Delta_j\nabla
 (\varrho^2p_2(\varrho,\bar\rho)),
 \dot\Delta_j\omega\rangle\big|\nonumber\\
 \leq&\,
 \frac{\lambda_4}{64}2^{2sj+2j}
 \|\dot\Delta_j\omega\|_{L^2}^2
 +Cb_j(t)^2\mathcal D_3(t)
 \|(\varrho,\omega)(t)\|_{\dot B^s_{2,\infty}}^2.
\end{align}
The linear pressure interaction is contained in
$\mathscr Q_j^{(1)}$ and is controlled by \eqref{G4.53}.
Combining \eqref{G4.46}--\eqref{G4.46.2}, \eqref{G4.50}--\eqref{G4.55} with Young's inequality gives
\begin{align}\label{G4.56}
 2^{2sj}|\mathscr Q_j(t)|
 \leq&\,\lambda_4 2^{2sj}
 \left(2^{2j}\|\dot\Delta_j\omega\|_{L^2}^2
 +\frac{2^{2j}}{1+2^{2j}}
 \|\dot\Delta_j\varrho\|_{L^2}^2\right) +C\varepsilon\mathbf 1_{\{j\leq j_0\}}
 2^{(1-s)j}
 \|(\varrho,\omega)(t)\|_{\dot B^s_{2,\infty}}^2\nonumber\\
 & +C\varepsilon\mathbf 1_{\{j>j_0\}}
 \|(\varrho,\omega)(t)\|_{\dot B^s_{2,\infty}}^2
 +C b^2_j(t)\mathcal D_3(t)
 \|(\varrho,\omega)(t)\|_{\dot B^s_{2,\infty}}^2.
\end{align}
For $t\geq0$, set
\begin{align*}
 \mathcal E_{\infty}(t):=
 \sup_{0\leq\tau\leq t}
 \|(\varrho,\omega)(\tau)\|_{\dot B^s_{2,\infty}}.
\end{align*}
After lowering the fixed cutoff if necessary, we assume $j_0\leq0$.
It follows from \eqref{G4.40}--\eqref{G4.41} and \eqref{G4.56}
that, for $j\leq j_0$,
\begin{align}\label{G4.57}
 &\frac{\mathrm d}{\mathrm dt}
 \big(2^{2sj}\mathscr E_j(t)\big)
 +\lambda_4 2^{2j}
 \big(2^{2sj}\mathscr E_j(t)\big)\leq
 C\varepsilon 2^{(1-s)j} \mathcal E^2_{\infty}(t)
 +Cb^2_j(t)\mathcal D_3(t) \mathcal E^2_{\infty}(t),
\end{align}
and, for $j>j_0$,
\begin{align}\label{G4.57a}
 &\frac{\mathrm d}{\mathrm dt}
 \big(2^{2sj}\mathscr E_j(t)\big)
 +\lambda_4\big(2^{2sj}\mathscr E_j(t)\big)\leq
 C\varepsilon \mathcal E^2_{\infty}(t)
 +Cb^2_j(t)\mathcal D_3(t) \mathcal E^2_{\infty}(t).
\end{align}
Integrating \eqref{G4.57} with the factor
$e^{\lambda_4 2^{2j}t}$ over $(0,t)$ gives
\begin{align}\label{G4.57b}
 2^{2sj}\mathscr E_j(t)
 \leq2^{2sj}\mathscr E_j(0)
 +C\varepsilon 2^{(-1-s)j} \mathcal E^2_{\infty}(t)+C\int_0^t b^2_j(\tau)\mathcal D_3(\tau)
  \mathcal E^2_{\infty}(\tau)\,\mathrm d\tau,
 \qquad j\leq j_0.
\end{align}
Because $-1-s>0$ and $j\leq j_0\leq0$, the middle coefficient in
\eqref{G4.57b} is uniformly bounded. 
Integrating \eqref{G4.57a} with the factor
$e^{\lambda_4t}$ gives, for $j>j_0$,
\begin{align*}
 2^{2sj}\mathscr E_j(t)
 \leq&\,
 e^{-\lambda_4t}2^{2sj}\mathscr E_j(0)
 +C\varepsilon\int_0^t
 e^{-\lambda_4(t-\tau)}
 \mathcal E_{\infty}(\tau)^2\,\mathrm d\tau+C\int_0^t
 e^{-\lambda_4(t-\tau)}
 b_j(\tau)^2\mathcal D_3(\tau)
 \mathcal E_{\infty}(\tau)^2\,\mathrm d\tau\\
 \leq&\,
 2^{2sj}\mathscr E_j(0)
 +C\varepsilon\mathcal E_{\infty}(t)^2
 +C\int_0^t
 b_j(\tau)^2\mathcal D_3(\tau)
 \mathcal E_{\infty}(\tau)^2\,\mathrm d\tau.
\end{align*}
Combining this estimate with \eqref{G4.57b}, taking the supremum
over the dyadic indices and over $0\leq\tau\leq t$, and using
\eqref{G4.40}, $b_j(\tau)^2\leq1$, and
\begin{align*}
 2^{(-1-s)j}\leq2^{(-1-s)j_0}\leq1,
 \qquad j\leq j_0\leq0,
\end{align*}
we obtain
\begin{align}\label{G4.57c}
 \mathcal E_{\infty}(t)^2
 \leq
 C\|(\varrho_0,\omega_0)\|_{\dot B^s_{2,\infty}}^2
 +C\varepsilon\mathcal E_{\infty}(t)^2+C\int_0^t\mathcal D_3(\tau)
 \mathcal E_{\infty}(\tau)^2\,\mathrm d\tau.
\end{align}
Choosing $\varepsilon_1>0$ such that
$C\varepsilon_1\leq\frac12$, we absorb the second term on the
right-hand side of \eqref{G4.57c}. Gronwall's inequality and \eqref{G3.60} then yield
\begin{align*}
 \mathcal E_{\infty}(t)^2
 &\leq
 C\|(\varrho_0,\omega_0)\|_{\dot B^s_{2,\infty}}^2
 \exp\left(
 C\int_0^t\mathcal D_3(\tau)\,\mathrm d\tau
 \right)\\
 &\leq
 C\|(\varrho_0,\omega_0)\|_{\dot B^s_{2,\infty}}^2
 \exp\left(
 C\|(\varrho_0,\omega_0)\|_{H^3}^2
 \right),
\end{align*}
which leads to
\begin{align}\label{G4.58}
 \sup_{t\geq0}
 \|(\varrho,\omega)(t)\|_{\dot B^s_{2,\infty}}
 \leq
 C\|(\varrho_0,\omega_0)\|_{\dot B^s_{2,\infty}}.
\end{align}

By \eqref{G3.82} and \eqref{G4.1},
\begin{align}\label{G4.59}
 \frac{\mathrm d}{\mathrm dt}\mathcal E_3(t)
 +4\lambda_4\mathcal D_3(t)\leq0,
 \qquad
 \mathcal E_3(t)\sim
 \|(\varrho,\omega)(t)\|_{H^3}^2.
\end{align}
Since
\begin{align*}
 \sum_{1\leq|\alpha|\leq3}
 \|\partial^\alpha(\varrho,\omega)(t)\|_{L^2}^2
 \lesssim\mathcal D_3(t),
\end{align*}
we have
\begin{align*}
 \mathcal E_3(t)
 \lesssim
 \|(\varrho,\omega)(t)\|_{L^2}^2
 +\mathcal D_3(t).
\end{align*}
On the other hand, interpolation between
$\dot B^s_{2,\infty}$ and $\dot H^1$, together with
\eqref{G4.58}, gives
\begin{align*}
 \|(\varrho,\omega)(t)\|_{L^2}^2
 \lesssim
 \|(\varrho,\omega)(t)\|_{\dot B^s_{2,\infty}}^{
 \frac{2}{1-s}}
 \|\nabla(\varrho,\omega)(t)\|_{L^2}^{
 \frac{-2s}{1-s}}\lesssim
 \mathcal D_3(t)^{\frac{-s}{1-s}}.
\end{align*}
Consequently,
\begin{align*}
 \mathcal E_3(t)
 \lesssim
 \mathcal D_3(t)^{\frac{-s}{1-s}}
 +\mathcal D_3(t).
\end{align*}
Since $\mathcal E_3(t)\leq\mathcal E_3(0)$ by \eqref{G4.59},
distinguishing the cases $\mathcal D_3(t)\leq1$ and
$\mathcal D_3(t)>1$ yields
\begin{align*}
 \mathcal D_3(t)
 \gtrsim
 \mathcal E_3(t)^{\frac{1-s}{-s}}
 =
 \mathcal E_3(t)^{1-\frac1s}.
\end{align*}
Substituting this estimate into \eqref{G4.59}, we obtain
\begin{align*}
 \frac{\mathrm d}{\mathrm dt}\mathcal E_3(t)
 +C\mathcal E_3(t)^{1-\frac1s}\leq0.
\end{align*}
Solving this differential inequality gives
\begin{align*}
 \mathcal E_3(t)\lesssim(1+t)^s.
\end{align*}
Therefore,
\begin{align}\label{G4.69}
 \|(\varrho,\omega)(t)\|_{H^3}
 \lesssim(1+t)^{\frac s2}.
\end{align}
Finally, integrating \eqref{G4.59} over $[t,\infty)$ gives
\begin{align}\label{G4.70}
 \int_t^\infty\mathcal D_3(\tau)\,\mathrm d\tau
 \lesssim\mathcal E_3(t)
 \lesssim(1+t)^s.
\end{align}
This proves \eqref{G4.2} for $k=0$.

\subsection{Low-frequency estimate for the first derivative}
To further control the evolution of first-order derivatives and close the full iteration argument, we perform low-frequency estimates for nonlinear source terms arising from \eqref{G4.44}--\eqref{G4.45}. The following lemma provides a key bound for the low-frequency part of the nonlinear source $(\mathcal N_1,\mathcal N_2)^L$, which will be essential for subsequent iterative energy closures.

\begin{lem}[Low-frequency source estimate]\label{L4.3}
For all $t\geq0$,
\begin{align}\label{G4.71}
 \|(\mathcal N_1,\mathcal N_2)^L(t)\|_{\dot B^s_{2,\infty}}
 \lesssim
 \varepsilon\|\nabla(\varrho,\omega)(t)\|_{L^2}
 +\|(\varrho,\omega)(t)\|_{L^2}\sqrt{\mathcal D_3(t)}.
\end{align}
\end{lem}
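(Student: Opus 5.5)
We split $(\mathcal N_1,\mathcal N_2)$ according to the exact representations \eqref{G4.44}--\eqref{G4.45}. One group consists of the terms linear in the stationary profile:
\[
\dive(\bar\rho\omega),\qquad
\frac{\bar\rho}{\rho_*}\Delta\omega,\qquad
\frac{\bar\rho}{\rho_*}\nabla\dive\omega,\qquad
\nabla\bigl(\bar\rho p_1(\bar\rho)\varrho\bigr).
\]
The other group consists of the genuinely quadratic terms:
\[
\dive(\varrho\omega),\qquad
\omega\cdot\nabla\omega,\qquad
g(\varrho,\bar\rho)\,(\Delta\omega,\nabla\dive\omega),\qquad
\nabla\bigl(\varrho^2p_2(\varrho,\bar\rho)\bigr),
\qquad
g:=\frac{\rho_\infty\varrho}{\rho_*(\varrho+\rho_*)}.
\]
For the first group nothing new is needed. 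Lemma~\ref{L4.2}, estimates \eqref{G4.22}, \eqref{G4.22.1} and \eqref{G4.24}, bounds each term in $\dot B^s_{2,\infty}$ after low-frequency truncation by $\varepsilon\|\nabla\omega\|_{L^2}$ or $\varepsilon\|\nabla\varrho\|_{L^2}$. Together these give the first term on the right of \eqref{G4.71}.

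For the quadratic group, the plan is to exploit the low-frequency cutoff. For $f^L$ only blocks $j\le j_0\le 0$ appear, and $s<0$, so
\[
\|f^L\|_{\dot B^s_{2,\infty}}\lesssim \sup_{j\le j_0}2^{sj}\|\dot\Delta_j f\|_{L^2}.
\]
It therefore suffices to bound each quadratic term in a space of lower regularity that embeds into $\dot B^{s}_{2,\infty}$ at low frequencies. The natural target is $\dot B^{-\frac32}_{2,\infty}$, via $L^1\hookrightarrow\dot B^{-\frac32}_{2,\infty}$. Since $s\ge-\frac32$, for $j\le 0$ we have $2^{sj}\le 2^{-\frac32 j}$, and hence
\[
\|f^L\|_{\dot B^s_{2,\infty}}\lesssim\|f\|_{\dot B^{-\frac32}_{2,\infty}}\lesssim\|f\|_{L^1}.
\]
Each term is then estimated in $L^1$ by H\"older's inequality, putting an undifferentiated factor in $L^2$ and a derivative factor in $L^2$:
\[
\|\varrho\,\dive\omega\|_{L^1}+\|\omega\cdot\nabla\varrho\|_{L^1}+\|\omega\cdot\nabla\omega\|_{L^1}
\lesssim\|(\varrho,\omega)\|_{L^2}\,\|\nabla(\varrho,\omega)\|_{L^2}.
\]
The viscous term satisfies
\[
\|g\,\nabla^2\omega\|_{L^1}\lesssim\|\varrho\|_{L^2}\|\nabla^2\omega\|_{L^2},
\]
using \eqref{G3.7} to bound $\rho_*^{-1}(\varrho+\rho_*)^{-1}$ in $L^\infty$. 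The pressure term gives
\[
\|\nabla(\varrho^2p_2)\|_{L^1}\lesssim\|\varrho\|_{L^2}\|\nabla\varrho\|_{L^2}+\|\varrho\|_{L^2}^2\|\nabla\bar\rho\|_{L^\infty}\ \text{(schematically)}.
\]
Here I would instead write $\nabla(\varrho^2p_2)=\varrho\,[2p_2\nabla\varrho+\varrho\nabla_x p_2]$ and bound the bracket in $L^2$ by $\|\nabla\varrho\|_{L^2}+\|\varrho\|_{L^\infty}\|\nabla\bar\rho\|_{L^2}$. This uses $\|\nabla\bar\rho\|_{L^2}\lesssim\varepsilon$, which follows from $\dot H^1\supset \dot B^1_{2,1}$ and interpolation in \eqref{G4.20}, together with $\|\varrho\|_{L^\infty}\lesssim\|\nabla\varrho\|_{H^1}$. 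In all cases the derivative factors are bounded by $\sqrt{\mathcal D_3(t)}$, giving $\|(\varrho,\omega)\|_{L^2}\sqrt{\mathcal D_3}$.

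The main obstacle is making the $L^1$ route airtight in two places. First, the $\nabla_x p_2$ contribution contains $\nabla\bar\rho$, which is not part of $\mathcal D_3$. It must be controlled in $L^\infty$ or $L^2$ by $\varepsilon$ from \eqref{G4.20}. It must also be arranged so that the other factor still carries a derivative, namely $\|\varrho\|_{L^\infty}\lesssim\sqrt{\mathcal D_3}$ via Lemma~\ref{L2.5}; otherwise one would get an unwanted $\|\varrho\|_{L^2}^2$. Second, the endpoint $s=-\frac32$ needs care: the embedding $L^1\hookrightarrow\dot B^{-\frac32}_{2,\infty}$ is exactly sharp there, and it is used without loss. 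For $s>-\frac32$ the factor $2^{(s+\frac32)j}\le1$ for $j\le j_0\le0$ gives a bounded, in fact better, constant.
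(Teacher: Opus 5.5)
Your proof is correct. It treats the stationary-linear terms exactly as the paper does, via Lemma~\ref{L4.2}, but it places the quadratic terms in $\dot B^s_{2,\infty}$ by a different embedding.

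\textbf{The paper's route.} It uses the sharp embedding $L^{p}\hookrightarrow\dot B^s_{2,\infty}$ with $p=\frac{6}{3-2s}\in[1,\frac65)$. It pairs an undifferentiated factor in $L^2$ with a derivative factor in $L^q$, where $q=-\frac3s\in[2,3]$, and uses $\|\nabla(\varrho,\omega)\|_{L^q}\lesssim\|\nabla(\varrho,\omega)\|_{H^1}$. The term $\varrho^2\partial_2p_2\nabla\bar\rho$ is handled by the triple H\"older $L^2\times L^6\times L^r$ with $r=\frac{6}{-1-2s}$. These bounds hold on the full frequency range and never use the projection $\Phi_0(D_x)$.

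\textbf{Your route.} You exploit the cutoff instead. Only blocks $j\le j_0\le0$ survive, and $s\ge-\frac32$ gives $2^{sj}\le2^{-\frac32 j}$, so everything reduces to $L^1\hookrightarrow\dot B^{-\frac32}_{2,\infty}$ and plain $L^2\times L^2$ H\"older.

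\textbf{Comparison.}
\begin{itemize}
\item Your argument is more elementary: it needs no $s$-dependent exponents and no $L^q$ control of derivatives, and it works uniformly in $s\ge-\frac32$.
\item It is all that is actually needed downstream, since the Duhamel bound \eqref{G4.16} in Lemma~\ref{L4.1} only involves $(F_1,F_2)^L$.
\item The price is that your estimate genuinely depends on the low-frequency projection. For $s>-\frac32$ it would fail for the high-frequency part, because $L^1$ does not control $2^{sj}\|\dot\Delta_j f\|_{L^2}$ as $j\to+\infty$. The paper's bounds on the quadratic terms are global in frequency.
\end{itemize}

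\textbf{The pressure term.} Your rewriting $\nabla(\varrho^2p_2)=\varrho\,[2p_2\nabla\varrho+\varrho\nabla_x p_2]$ is sound. So is the estimate
\[
\|\varrho\|_{L^2}\,\|\varrho\|_{L^\infty}\,\|\nabla\bar\rho\|_{L^2}\lesssim\varepsilon\,\|\varrho\|_{L^2}\sqrt{\mathcal D_3},
\]
which uses $\|\varrho\|_{L^\infty}\lesssim\|\nabla\varrho\|_{H^1}$ and $\|\nabla\bar\rho\|_{L^2}\lesssim\varepsilon$ from interpolating \eqref{G4.20}. A more direct alternative is $\|\varrho\|_{L^2}\|\varrho\|_{L^6}\|\nabla\bar\rho\|_{L^3}$. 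The preliminary ``schematic'' bound containing $\|\varrho\|_{L^2}^2$ is not of the required form and should simply be dropped from a written proof.
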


\begin{proof}
By \eqref{G4.22}--\eqref{G4.24}, the terms in the first lines of \eqref{G4.44}--\eqref{G4.45}
satisfy
\begin{align}\label{G4.72}
 &\left\|\left(
 \dive(\bar\rho\omega),
 \frac{\bar\rho}{\rho_*}\Delta\omega,
 \frac{\bar\rho}{\rho_*}\nabla\dive\omega,
 \nabla(\bar\rho p_1(\bar\rho)\varrho)
 \right)^L\right\|_{\dot B^s_{2,\infty}}
 \lesssim\varepsilon\|\nabla(\varrho,\omega)\|_{L^2}.
\end{align}
Set
\begin{align*}
 p=\frac6{3-2s},\qquad q=-\frac3s.
\end{align*}
Since $-3/2\leq s<-1$, we have
\begin{align*}
 1\leq p<\frac65,
 \qquad2\leq q\leq3,
 \qquad\frac1p=\frac12+\frac1q,
 \qquad L^p\hookrightarrow\dot B^s_{2,\infty}.
\end{align*}
Hence
\begin{align}
 \|\dive(\varrho\omega)\|_{\dot B^s_{2,\infty}}
 \lesssim&\,
 \|\varrho\|_{L^2}\|\nabla\omega\|_{L^q}
 +\|\omega\|_{L^2}\|\nabla\varrho\|_{L^q}\lesssim
 \|(\varrho,\omega)\|_{L^2}\mathcal D_3^{1/2},\label{G4.75}\\
 \|\omega\cdot\nabla\omega\|_{\dot B^s_{2,\infty}}
 \lesssim&\,
 \|\omega\|_{L^2}\|\nabla\omega\|_{L^q}
 \lesssim\|(\varrho,\omega)\|_{L^2}\mathcal D_3^{1/2},\label{G4.75.1}
\end{align}
where $\|\nabla(\varrho,\omega)\|_{L^q}$ is controlled by
$\mathcal D_3^{1/2}$ because $2\leq q\leq3$. By \eqref{G3.7},
\begin{align*}
 \left|\frac{\rho_\infty\varrho}{
 \rho_*(\varrho+\rho_*)}\right|\lesssim|\varrho|,
\end{align*}
and therefore
\begin{align}
 \left\|\frac{\rho_\infty\varrho}{
 \rho_*(\varrho+\rho_*)}
 (\Delta\omega,\nabla\dive\omega)\right\|_{
 \dot B^s_{2,\infty}}\lesssim
 \|\varrho\|_{L^2}\|\nabla^2\omega\|_{L^q}
 \lesssim\|(\varrho,\omega)\|_{L^2}\mathcal D_3^{1/2}.\label{G4.75.2}
\end{align}
Finally,
\begin{align}
 \nabla\big(\varrho^2p_2(\varrho,\bar\rho)\big)=
 \big(2\varrho p_2+\varrho^2\partial_1p_2\big)\nabla\varrho
 +\varrho^2\partial_2p_2\nabla\bar\rho.\nonumber
\end{align}
The first term above satisfies
\begin{align}
 \|\big(2\varrho p_2+\varrho^2\partial_1p_2\big)
 \nabla\varrho\|_{L^p}
 \lesssim\|\varrho\|_{L^2}\|\nabla\varrho\|_{L^q}
 \lesssim\|(\varrho,\omega)\|_{L^2}\mathcal D_3^{1/2}.\label{G4.79}
\end{align}
For the second term, let
\begin{align*}
 r=\frac6{-1-2s}\in[3,6).
\end{align*}
Since
\begin{align*}
 \frac1p=\frac12+\frac16+\frac1r,
 \qquad \|\nabla\bar\rho\|_{L^r}\lesssim\varepsilon,
\end{align*}
we obtain
\begin{align}\label{G4.83}
 \|\varrho^2\partial_2p_2\nabla\bar\rho\|_{L^p}
 \lesssim
 \|\varrho\|_{L^2}\|\varrho\|_{L^6}
 \|\nabla\bar\rho\|_{L^r}
 \lesssim
 \varepsilon\|\varrho\|_{L^2}\|\nabla\varrho\|_{L^2}.
\end{align}
Consequently, from \eqref{G4.72}--\eqref{G4.83}, we get
\eqref{G4.71}.
\end{proof}

Having obtained the uniform bound for the low-frequency part of the nonlinear source $(\mathcal N_1,\mathcal N_2)^L$, we express the solution to the Cauchy problem \eqref{I3} using Duhamel’s formula:
\begin{align*}
 (\varrho,\omega)(t)
 =\mathbb S(t)(\varrho_0,\omega_0)
 +\int_0^t\mathbb S(t-\tau)
 (\mathcal N_1,\mathcal N_2)(\tau)\,\mathrm d\tau.
\end{align*}
By Lemmas~\ref{L4.1} and \ref{L4.3},
\begin{align}\label{G4.85}
 \|\nabla(\varrho,\omega)^L(t)\|_{L^2}
\lesssim&\,
 (1+t)^{-\frac{1-s}{2}}
 \|(\varrho_0,\omega_0)\|_{\dot B^s_{2,\infty}}\nonumber\\
 & +\varepsilon\int_0^t
 (1+t-\tau)^{-\frac{1-s}{2}}
 \|\nabla(\varrho,\omega)(\tau)\|_{L^2}\,\mathrm d\tau\nonumber\\
 & +\int_0^t
 (1+t-\tau)^{-\frac{1-s}{2}}
 \|(\varrho,\omega)(\tau)\|_{L^2}
 \sqrt{\mathcal D_3(\tau)}\,\mathrm d\tau.
\end{align}
For $t\geq2$, \eqref{G3.60} and \eqref{G4.69} give
\begin{align}\label{G4.86}
 &\int_0^{t/2}(1+t-\tau)^{-\frac{1-s}{2}}
 \|(\varrho,\omega)(\tau)\|_{L^2}
 \sqrt{\mathcal D_3(\tau)}\,\mathrm d\tau\nonumber\\
 \lesssim&\,
 (1+t)^{-\frac{1-s}{2}}
 \left(\int_0^\infty(1+\tau)^s\,\mathrm d\tau\right)^{1/2}
 \left(\int_0^\infty\mathcal D_3(\tau)\,\mathrm d\tau\right)^{1/2}\nonumber\\
 \lesssim&\,(1+t)^{-\frac{1-s}{2}},
\end{align}
and, \eqref{G4.70} yields
\begin{align}
 &\int_{t/2}^{t}(1+t-\tau)^{-\frac{1-s}{2}}
 \|(\varrho,\omega)(\tau)\|_{L^2}
\sqrt{\mathcal D_3(\tau)}\,\mathrm d\tau\nonumber\\
 \lesssim&\,
 (1+t)^{\frac s2}
 \left(\int_{t/2}^{t}(1+t-\tau)^{-(1-s)}\,\mathrm d\tau\right)^{1/2}
 \left(\int_{t/2}^{t}\mathcal D_3(\tau)\,\mathrm d\tau\right)^{1/2}\nonumber\\
 \lesssim&\,(1+t)^s
 \lesssim(1+t)^{-\frac{1-s}{2}}.\label{G4.86.1}
\end{align}
where the last inequality follows from $s<-1$. Moreover,
\begin{align}\label{G4.88}
 \int_0^t(1+t-\tau)^{-\frac{1-s}{2}}
 (1+\tau)^{-\frac{1-s}{2}}\,\mathrm d\tau
 \lesssim(1+t)^{-\frac{1-s}{2}},
\end{align}
because $(1-s)/2>1$.

We next establish a crucial differential dissipation inequality for the high-frequency component. Once this high-frequency decay property is available, we shall combine it with integral estimates \eqref{G4.86}--\eqref{G4.88} and a weighted Gronwall's inequality to deduce the full low-frequency temporal decay.

\subsection{High-frequency Lyapunov estimate and completion of the proof}
We now turn our attention to the complementary high-frequency component, and construct a suitable Lyapunov functional to close the high-order energy estimate. Define
\begin{align*}
 \mathfrak E_1(t):=
 \|\nabla\varrho(t)\|_{L^2}^2
 +\|\nabla\omega(t)\|_{L^2}^2
 +\eta_4\langle\nabla\varrho^H(t),\omega^H(t)\rangle,
\end{align*}
where $0<\eta_4\ll1$. By \eqref{G2.3},
\begin{align}\label{G4.90}
 \mathfrak E_1(t)\sim
 \|\nabla(\varrho,\omega)(t)\|_{L^2}^2.
\end{align}
We establish a dissipative differential bound for $\mathfrak E_1(t)$ in the following lemma.
\begin{lem}[First-order high-frequency inequality]
If $\varepsilon\leq\varepsilon_1$, with $\varepsilon_1$ fixed
after \eqref{G4.57c}, then
\begin{align}\label{G4.91}
 \frac{\mathrm d}{\mathrm dt}\mathfrak E_1(t)
 +\lambda_4\mathfrak E_1(t)
 \lesssim\|\nabla(\varrho,\omega)^L(t)\|_{L^2}^2.
\end{align}
\end{lem}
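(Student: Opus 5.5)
We will derive \eqref{G4.91} by a first-order analogue of the energy argument leading to \eqref{G3.82}, with the cross term restricted to high frequencies. The point is that $\mathfrak E_1$ controls only $\|\nabla(\varrho,\omega)\|_{L^2}^2$, so the damping $\lambda_4\mathfrak E_1$ must come from the dissipation. At high frequencies this dissipation is $\|\nabla\varrho^H\|_{L^2}^2+\|\nabla^2\omega\|_{L^2}^2$, and by \eqref{G2.3} it dominates $\|\nabla(\varrho,\omega)^H\|_{L^2}^2$. The low-frequency remainder is exactly what appears on the right-hand side.

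\emph{Step 1 (basic first-order identity).} Apply $\partial_i$ to \eqref{I3}, test with $\partial_i\varrho$ and $\partial_i\omega$, and sum over $i$. This gives
\begin{align*}
\frac12\frac{\mathrm d}{\mathrm dt}\|\nabla(\varrho,\omega)\|_{L^2}^2+\mu_1\|\nabla^2\omega\|_{L^2}^2+\mu_2\|\nabla\dive\omega\|_{L^2}^2
=\langle\nabla\mathcal N_1,\nabla\varrho\rangle+\langle\nabla\mathcal N_2,\nabla\omega\rangle .
\end{align*}

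\emph{Step 2 (high-frequency cross term).} Apply $\Phi_1(D_x)$ to \eqref{I3} and repeat the computation behind \eqref{G4.38}, testing the momentum equation with $\nabla\varrho^H$. This produces
\begin{align*}
\frac{\mathrm d}{\mathrm dt}\langle\nabla\varrho^H,\omega^H\rangle+\gamma\|\nabla\varrho^H\|_{L^2}^2\le C\|\nabla\omega^H\|_{L^2}^2+C\|\nabla^2\omega^H\|_{L^2}^2+\big|\langle\mathcal N_2^H,\nabla\varrho^H\rangle\big|+\big|\langle\mathcal N_1^H,\dive\omega^H\rangle\big| .
\end{align*}
Since $\|\nabla\omega^H\|_{L^2}\lesssim\|\nabla^2\omega\|_{L^2}$, adding $\eta_4$ times this inequality to twice the identity of Step~1, with $\eta_4$ small, yields
\begin{align*}
\frac{\mathrm d}{\mathrm dt}\mathfrak E_1+c\big(\|\nabla\varrho^H\|_{L^2}^2+\|\nabla^2\omega\|_{L^2}^2\big)\le \text{(nonlinear terms)} .
\end{align*}
Now split $\|\nabla\varrho\|_{L^2}^2\le2\|\nabla\varrho^L\|_{L^2}^2+2\|\nabla\varrho^H\|_{L^2}^2$, and bound $\|\nabla\omega\|_{L^2}^2\lesssim\|\nabla\omega^L\|_{L^2}^2+\|\nabla^2\omega\|_{L^2}^2$. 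Together with \eqref{G4.90}, this gives $c\,\mathfrak E_1\le C(\text{dissipation})+C\|\nabla(\varrho,\omega)^L\|_{L^2}^2$. The choice \eqref{G4.1} of $\lambda_4$ should be compatible with these constants; if it is not, one can shrink $\lambda_4$ at this point.

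\emph{Step 3 (nonlinear terms, the main obstacle).} Every nonlinear term must be bounded by a small multiple of $\|\nabla\varrho\|_{L^2}^2+\|\nabla^2\omega\|_{L^2}^2$. Crucially, no undifferentiated $L^2$ norm may appear, because $\|(\varrho,\omega)\|_{L^2}$ may be large.

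For the solution-quadratic terms, use integration by parts as in \eqref{G3.36}. For instance, $\langle\nabla(\omega\cdot\nabla\varrho),\nabla\varrho\rangle$ is bounded by $\|\nabla\omega\|_{L^\infty}\|\nabla\varrho\|_{L^2}^2$. The term $\varrho\nabla\dive\omega$ is bounded by $\|\varrho\|_{L^\infty}\|\nabla^2\omega\|\|\nabla\varrho\|$. The term $\langle\nabla(h\Delta\omega),\nabla\omega\rangle=-\langle h\Delta\omega,\Delta\omega\rangle$ is bounded by $\|h\|_{L^\infty}\|\nabla^2\omega\|_{L^2}^2$. The coefficients $\|(\varrho,\omega)\|_{L^\infty\cap W^{1,\infty}}$ and $\|h\|_{L^\infty}$ are $\lesssim\varepsilon_0+\sigma$ by \eqref{G3.6} and \eqref{G3.44}.

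The delicate terms are those linear in $\bar\rho$ where no derivative falls on the solution, namely $(\nabla^2\bar\rho)\omega$ from $\nabla\dive(\bar\rho\omega)$, and $\nabla\bar\rho\,\varrho$ from $\nabla\mathcal P$. These are exactly the terms flagged in Remark~\ref{R1.2}. Bound them by Hölder and Sobolev:
\begin{align*}
\|(\nabla^2\bar\rho)\omega\|_{L^2}\le\|\nabla^2\bar\rho\|_{L^3}\|\omega\|_{L^6}\lesssim\varepsilon\|\nabla\omega\|_{L^2},\qquad
\|\nabla\bar\rho\,\varrho\|_{L^2}\le\|\nabla\bar\rho\|_{L^3}\|\varrho\|_{L^6}\lesssim\varepsilon\|\nabla\varrho\|_{L^2}.
\end{align*}
The coefficient bound $\|\nabla^2\bar\rho\|_{L^3}\lesssim\|\bar\rho\|_{\dot B^{5/2}_{2,1}}\lesssim\varepsilon$ follows from \eqref{G4.20} by interpolation. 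For the first of these, pair against $\nabla\varrho$ after integrating by parts once. The resulting bound $\varepsilon\|\nabla\omega\|_{L^2}\|\nabla\varrho\|_{L^2}$ is not pure dissipation, since $\|\nabla\omega\|_{L^2}$ contains a low-frequency part. Split it as $\varepsilon\|\nabla\omega^L\|_{L^2}^2$, which goes into the right-hand side of \eqref{G4.91}, plus $\varepsilon(\|\nabla^2\omega\|_{L^2}^2+\|\nabla\varrho\|_{L^2}^2)$, whose $\nabla\varrho^L$ part is treated the same way. This is precisely why the right-hand side of \eqref{G4.91} carries $\|\nabla(\varrho,\omega)^L\|_{L^2}^2$.

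The terms in the high-frequency cross term are handled identically, using $\|\mathcal N_1\|_{L^2}+\|\mathcal N_2\|_{L^2}\lesssim\varepsilon\big(\|\nabla\varrho\|_{L^2}+\|\nabla\omega\|_{H^1}\big)$. Choosing $\varepsilon_1$ and $\sigma$ small then absorbs all the $\varepsilon$-multiples of the dissipation and closes \eqref{G4.91}.
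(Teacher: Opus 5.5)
Your proposal is correct and follows the paper's proof essentially step by step. The matching steps are the first-order identity \eqref{G4.92}, the high-frequency cross term \eqref{G4.93}, and the Hölder--Sobolev bounds $\|\nabla^2\bar\rho\|_{L^3}\|\omega\|_{L^6}$ and $\|\nabla\bar\rho\|_{L^3}\|\varrho\|_{L^6}$ for the stationary terms with no derivative on the solution; then, as in the paper, the $\varepsilon$-multiples of $\|\nabla(\varrho,\omega)\|_{L^2}^2$ are split into the absorbed high-frequency dissipation plus $\|\nabla(\varrho,\omega)^L\|_{L^2}^2$ on the right-hand side of \eqref{G4.91}. Your caveat about possibly shrinking $\lambda_4$ is harmless: all constraints on $\lambda_4$ are upper bounds depending only on fixed constants, and the later step \eqref{G4.115} only uses that the rate is positive.
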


\begin{proof}
Applying $\nabla$ to \eqref{I3}, and taking the $L^2$ inner products of $\eqref{I3}_1$ with $\nabla\varrho$ and of $\eqref{I3}_2$ with $\nabla\omega$, gives
\begin{align}\label{G4.92}
 \frac12\frac{\mathrm d}{\mathrm dt}
 \|\nabla(\varrho,\omega)\|_{L^2}^2
 +\mu_1\|\nabla^2\omega\|_{L^2}^2
 +\mu_2\|\nabla\dive\omega\|_{L^2}^2=
 \langle\nabla\mathcal N_1,\nabla\varrho\rangle
 +\langle\nabla\mathcal N_2,\nabla\omega\rangle.
\end{align}
Applying $\Phi_1(D_x)$ to \eqref{I3}, and repeating the calculation
in \eqref{G4.38}, yields
\begin{align}\label{G4.93}
 &\frac{\mathrm d}{\mathrm dt}
 \langle\nabla\varrho^H,\omega^H\rangle
 +\gamma\|\nabla\varrho^H\|_{L^2}^2\nonumber\\
 =&\,
 \gamma\|\dive\omega^H\|_{L^2}^2
 +\mu_1\langle\nabla\varrho^H,\Delta\omega^H\rangle
 +\mu_2\langle\nabla\varrho^H,\nabla\dive\omega^H\rangle\nonumber\\
 &+
 \langle\nabla(\mathcal N_1)^H,\omega^H\rangle
 +\langle\nabla\varrho^H,(\mathcal N_2)^H\rangle.
\end{align}
The linear cross inner-product terms on the right-hand side of \eqref{G4.93}
satisfy
\begin{align}\label{G4.94}
 \eta_4\left|
 \mu_1\langle\nabla\varrho^H,\Delta\omega^H\rangle
 +\mu_2\langle\nabla\varrho^H,
 \nabla\dive\omega^H\rangle\right|\leq
 \frac{\lambda_4}{8}\|\nabla\varrho^H\|_{L^2}^2
 +\frac{\mu_1}{8}\|\nabla^2\omega\|_{L^2}^2.
\end{align}

We next estimate all terms in \eqref{G4.44}--\eqref{G4.45}.
For the stationary term in the continuity equation, we have
\begin{align*}
 \big|\langle\nabla\dive(\bar\rho\omega),
 \nabla\varrho\rangle\big|\lesssim&\,
 \Big(
 \|\bar\rho\|_{L^\infty}\|\nabla^2\omega\|_{L^2}
 +\|\nabla\bar\rho\|_{L^\infty}\|\nabla\omega\|_{L^2}
 +\|\nabla^2\bar\rho\|_{L^3}\|\omega\|_{L^6}
 \Big)\|\nabla\varrho\|_{L^2}\nonumber\\
 \leq&\,
 \frac{\lambda_4}{32}\|\nabla^2\omega\|_{L^2}^2
 +C\varepsilon\|\nabla(\varrho,\omega)\|_{L^2}^2.
\end{align*}
For $a=\bar\rho/\rho_*$, it holds that
\begin{align*}
 &\big|\langle\nabla(a\Delta\omega),
 \nabla\omega\rangle\big|
 +\big|\langle\nabla(a\nabla\dive\omega),
 \nabla\omega\rangle\big|
 \lesssim
 {\|a\|_{L^\infty}\|\nabla^2\omega\|_{L^2}^2}
 \lesssim\varepsilon\|\nabla^2\omega\|_{L^2}^2.
\end{align*}
For the linear pressure term, 
\begin{align*}
 \big|\langle\nabla^2(\bar\rho p_1(\bar\rho)\varrho),
 \nabla\omega\rangle\big|
 =&\,
 \big|\langle\nabla(\bar\rho p_1(\bar\rho)\varrho),
 \nabla\dive\omega\rangle\big|\nonumber\\
  \lesssim&\,
 \left(
 \|\bar\rho p_1(\bar\rho)\|_{L^\infty}\|\nabla\varrho\|_{L^2}
 +\|\nabla(\bar\rho p_1(\bar\rho))\|_{L^3}
 \|\varrho\|_{L^6}\right)
 \|\nabla^2\omega\|_{L^2}\nonumber\\
  \leq&\,
 \frac{\lambda_4}{32}\|\nabla^2\omega\|_{L^2}^2
 +C\varepsilon^2\|\nabla\varrho\|_{L^2}^2.
\end{align*}
The corresponding terms in \eqref{G4.93} satisfy
\begin{align*}
 \big|\langle\nabla[\dive(\bar\rho\omega)]^H,
 \omega^H\rangle\big|
 =\big|\langle[\dive(\bar\rho\omega)]^H,
 \dive\omega^H\rangle\big|\lesssim
 \varepsilon\|\nabla\omega\|_{L^2}
 \|\nabla\omega^H\|_{L^2},
\end{align*}
\begin{align*}
 \left\|\left(
 \frac{\bar\rho}{\rho_*}\Delta\omega,
 \frac{\bar\rho}{\rho_*}\nabla\dive\omega,
 \nabla(\bar\rho p_1(\bar\rho)\varrho)
 \right)^H\right\|_{L^2}\lesssim
 \varepsilon\big(
 \|\nabla^2\omega\|_{L^2}
 +\|\nabla\varrho\|_{L^2}\big).
\end{align*}
By virtue of the high-frequency Bernstein inequalities,
\begin{align}\label{G4.100}
 \|\nabla(\varrho,\omega)^H\|_{L^2}^2
 \lesssim
 \|\nabla\varrho^H\|_{L^2}^2
 +\|\nabla^2\omega^H\|_{L^2}^2.
\end{align}
Using \eqref{G4.94} and the estimates above, the 
stationary-profile terms in \eqref{G4.92}--\eqref{G4.93} are
bounded by
\begin{align*}
 \frac{\lambda_4}{8}
 \left(\|\nabla\varrho^H\|_{L^2}^2
 +\|\nabla^2\omega\|_{L^2}^2\right)
 +C\|\nabla(\varrho,\omega)^L\|_{L^2}^2.
\end{align*}

For the nonlinear term in the continuity equation, integration by parts yields
\begin{align}\label{G4.102}
 &\big|\langle\nabla\dive(\varrho\omega),
 \nabla\varrho\rangle\big|\nonumber\\
 \lesssim&\,
 \|\nabla\omega\|_{L^\infty}\|\nabla\varrho\|_{L^2}^2
 +\|\varrho\|_{L^\infty}\|\nabla^2\omega\|_{L^2}
 \|\nabla\varrho\|_{L^2}
 +\|\nabla\varrho\|_{L^3}\|\nabla\omega\|_{L^6}
 \|\nabla\varrho\|_{L^2}.
\end{align}
The convection term satisfies
\begin{align}\label{G4.102.1}
 \big|\langle\nabla(\omega\cdot\nabla\omega),
 \nabla\omega\rangle\big|
 \lesssim
 \|\nabla\omega\|_{L^\infty}
 \|\nabla\omega\|_{L^2}^2.
\end{align}
For
\begin{align*}
 a(\varrho,\bar\rho)
 =\frac{\rho_\infty\varrho}{
 \rho_*(\varrho+\rho_*)},
\end{align*}
by \eqref{G3.7}, we have
\begin{align*}
 \|a(\varrho,\bar\rho)\|_{L^\infty}
 +\|\nabla a(\varrho,\bar\rho)\|_{L^3\cap L^\infty}
 +\|\nabla^2 a(\varrho,\bar\rho)\|_{L^3}
 \lesssim\varepsilon,
\end{align*}
which further implies
\begin{align}\label{G4.102.2}
 &\big|\langle\nabla(a(\varrho,\bar\rho)\Delta\omega),
 \nabla\omega\rangle\big|
 +\big|\langle\nabla(a(\varrho,\bar\rho)
 \nabla\dive\omega),\nabla\omega\rangle\big|
 \lesssim\varepsilon\|\nabla^2\omega\|_{L^2}^2.
\end{align}
For the quadratic pressure term, \eqref{G4.79} and \eqref{G4.83} together yield
\begin{align}\label{G4.102.3}
 \big|\langle\nabla^2(\varrho^2p_2(\varrho,\bar\rho)),
 \nabla\omega\rangle\big|
 =\,
 \big|\langle\nabla(\varrho^2p_2(\varrho,\bar\rho)),
 \nabla\dive\omega\rangle\big|
 \lesssim\,
 \varepsilon\|\nabla\varrho\|_{L^2}
 \|\nabla^2\omega\|_{L^2}.
\end{align}
The nonlinear terms in \eqref{G4.93} satisfy
\begin{align}
 \|\dive(\varrho\omega)\|_{L^2}
&\lesssim\varepsilon\|\nabla(\varrho,\omega)\|_{L^2},\label{G4.102.4}\\
\label{G4.109}
 \left\|\left(
 \omega\cdot\nabla\omega,
 a(\varrho,\bar\rho)\Delta\omega,
 a(\varrho,\bar\rho)\nabla\dive\omega,
 \nabla(\varrho^2p_2(\varrho,\bar\rho))
 \right)\right\|_{L^2}
 &\lesssim
 \varepsilon\big(
 \|\nabla(\varrho,\omega)\|_{L^2}
 +\|\nabla^2\omega\|_{L^2}\big).
\end{align}
Making use of \eqref{G3.6}, \eqref{G4.100}, and Young's inequality to treat bounds \eqref{G4.102}--\eqref{G4.109}, all nonlinear contributions can be controlled by
\begin{align*}
 \frac{\lambda_4}{8}
 \left(\|\nabla\varrho^H\|_{L^2}^2
 +\|\nabla^2\omega\|_{L^2}^2\right)
 +C\|\nabla(\varrho,\omega)^L\|_{L^2}^2.
\end{align*}
Multiplying \eqref{G4.93} by $\eta_4$ and adding the resulting identity to twice equation \eqref{G4.92}, we obtain
\begin{align}\label{G4.111}
 \frac{\mathrm d}{\mathrm dt}\mathfrak E_1(t)
 +2\lambda_4\left(
 \|\nabla\varrho^H(t)\|_{L^2}^2
 +\|\nabla^2\omega(t)\|_{L^2}^2\right)
 \lesssim\|\nabla(\varrho,\omega)^L(t)\|_{L^2}^2.
\end{align}
Lastly, by frequency decomposition, we have
\begin{align}\label{G4.112}
 \|\nabla(\varrho,\omega)\|_{L^2}^2
 \lesssim
 \|\nabla(\varrho,\omega)^L\|_{L^2}^2
 +\|\nabla\varrho^H\|_{L^2}^2
 +\|\nabla^2\omega^H\|_{L^2}^2.
\end{align}
Adding a sufficiently small multiple of
$\|\nabla(\varrho,\omega)^L\|_{L^2}^2$ to both sides of
\eqref{G4.111}, together with the equivalence relation \eqref{G4.90} and frequency decomposition \eqref{G4.112}, yields the desired inequality \eqref{G4.91}.
\end{proof}

With inequality \eqref{G4.91} established, we proceed to derive the global time-decay estimate by introducing a supremum control functional. Define
\begin{align*}
 \mathcal M_1(t):=
 \sup_{0\leq\tau\leq t}
 (1+\tau)^{\frac{1-s}{2}}
 \|\nabla(\varrho,\omega)(\tau)\|_{L^2}.
\end{align*}
Substituting \eqref{G4.86}--\eqref{G4.88} into \eqref{G4.85}, we obtain
\begin{align}\label{G4.114}
 (1+t)^{\frac{1-s}{2}}
 \|\nabla(\varrho,\omega)^L(t)\|_{L^2}
 \leq C\bigl(1+\varepsilon\mathcal M_1(t)\bigr).
\end{align}
On the other hand, integrating \eqref{G4.91} over $(0,t)$ gives
\begin{align}\label{G4.115}
 \mathfrak E_1(t)
 \lesssim e^{-\lambda_4t}\mathfrak E_1(0)
 +\int_0^te^{-\lambda_4(t-\tau)}
 \|\nabla(\varrho,\omega)^L(\tau)\|_{L^2}^2\,\mathrm d\tau.
\end{align}
Since
\begin{align*}
 \int_0^te^{-\lambda_4(t-\tau)}(1+\tau)^{s-1}\,\mathrm d\tau
 \lesssim(1+t)^{s-1},
\end{align*}
the relations \eqref{G4.90}, \eqref{G4.114} and \eqref{G4.115} imply
\begin{align*}
 \mathcal M_1(t)
 \leq C\bigl(1+\varepsilon\mathcal M_1(t)\bigr).
\end{align*}
The choice of $\varepsilon_1$ yields the uniform bound of $\mathcal M_1(t)$, and thus
\begin{align}\label{G4.118}
 \|\nabla(\varrho,\omega)(t)\|_{L^2}
 \leq C(1+t)^{-\frac{1-s}{2}}.
\end{align}
Combining \eqref{G4.69} and \eqref{G4.118}, and recalling that
$\varrho=\rho-\rho_*$, we have
\begin{align}\label{G4.119}
 \|\nabla^k(\rho-\rho_*,\omega)(t)\|_{L^2}
 \leq C(1+t)^{-\frac{k-s}{2}},
 \qquad k=0,1.
\end{align}
Thus, the proof of Theorem~\ref{T1.2} is completed. \qed

\section{Lower bounds and optimality of the decay rates}\label{S5}

In this section, we prove Theorem~\ref{T1.4} by applying the
localized energy \eqref{G4.39} to the low-frequency blocks selected
by \eqref{G1.16}. After finishing Theorem~\ref{T1.4}, Corollary~\ref{cor1.5} will be verified as a direct consequence at the end of this section.
We keep the parameter $\eta_3$ fixed in
\eqref{G4.1a} and choose
\begin{align*}
 0<2\lambda_6<\lambda_5,
 \qquad
 \lambda_6<\frac{\lambda_4}{8}.
\end{align*}

\subsection{A localized energy inequality}

For $j\leq j_0$, let $\mathscr E_j$ and the signed quantity
$\mathscr Q_j$ be defined by \eqref{G4.39} and \eqref{G4.42},
respectively. Set
\begin{align*}
 \mathscr D_j:={}&
 2\mu_1\|\nabla\dot\Delta_j\omega\|_{L^2}^2
 +2\mu_2\|\dive\dot\Delta_j\omega\|_{L^2}^2
 +\frac{\eta_3\gamma}{1+2^{2j}}
 \|\nabla\dot\Delta_j\varrho\|_{L^2}^2\nonumber\\
 &-\frac{\eta_3\gamma}{1+2^{2j}}
 \|\dive\dot\Delta_j\omega\|_{L^2}^2
 -\frac{\eta_3\mu_1}{1+2^{2j}}
 \langle\nabla\dot\Delta_j\varrho,
 \Delta\dot\Delta_j\omega\rangle\nonumber\\
 &-\frac{\eta_3\mu_2}{1+2^{2j}}
 \langle\nabla\dot\Delta_j\varrho,
 \nabla\dive\dot\Delta_j\omega\rangle.
\end{align*}
Multiplying \eqref{G4.37} by $2$ and \eqref{G4.38} by
$\eta_3(1+2^{2j})^{-1}$ and adding the resulting identities gives
\begin{align*}
 \frac{\mathrm d}{\mathrm dt}\mathscr E_j(t)
 +\mathscr D_j(t)=\mathscr Q_j(t).
\end{align*}
By \eqref{G4.1a}, \eqref{G4.40}, and Young's inequality,
\begin{align*}
 \frac{\eta_3(\mu_1+\mu_2)}{1+2^{2j}}
 \big|\langle\nabla\dot\Delta_j\varrho,
 \nabla^2\dot\Delta_j\omega\rangle\big|\nonumber
 \leq\,
 \frac{\eta_3\gamma}{4(1+2^{2j})}
 \|\nabla\dot\Delta_j\varrho\|_{L^2}^2
 +C\eta_3\frac{2^{2j}}{1+2^{2j}}
 \|\nabla\dot\Delta_j\omega\|_{L^2}^2.
\end{align*}
Since
\begin{align*}
 \frac{2^{2j}}{1+2^{2j}}\mathscr E_j(t)
 \sim
 2^{2j}\|\dot\Delta_j(\varrho,\omega)(t)\|_{L^2}^2,
 \qquad j\leq j_0,
\end{align*}
we have
\begin{align}\label{G5.5}
 2\lambda_6 2^{2j}\mathscr E_j(t)
 \leq\mathscr D_j(t)
 \leq(\lambda_5-\lambda_6)2^{2j}\mathscr E_j(t),
 \qquad j\leq j_0.
\end{align}

By \eqref{G4.58}, set
\begin{align}\label{G5.12}
 \mathcal E_{\infty}:={}&
 \sup_{t\geq0}\|(\varrho,\omega)(t)\|_{\dot B^s_{2,\infty}}
 \leq C
 \|(\varrho_0,\omega_0)\|_{\dot B^s_{2,\infty}}.
\end{align}
Let $(b_j(t))_{j\in\mathbb Z}$ be the measurable frequency envelope
fixed in \eqref{G4.46}--\eqref{G4.46.2}, \eqref{G4.52} and \eqref{G4.53}--\eqref{G4.56}. Thus
\begin{align}\label{G5.8}
 b_j(t)\geq0,
 \qquad
 \sum_{j\in\mathbb Z}b_j(t)^2\leq1.
\end{align}
Using \eqref{G4.46}--\eqref{G4.56} together with the smallness margin $\lambda_6$
gives
\begin{align}\label{G5.9}
 2^{2sj}|\mathscr Q_j(t)|
 \leq
 \lambda_6 2^{2j}
 \big(2^{2sj}\mathscr E_j(t)\big)
 +C\varepsilon 2^{(1-s)j}\mathcal E_{\infty}^2
 +Cb^2_j(t)\mathcal D_3(t)\mathcal E_{\infty}^2.
\end{align}
Combining \eqref{G5.5} and \eqref{G5.9}, we obtain
\begin{align}\label{G5.11}
 \frac{\mathrm d}{\mathrm dt}
 \big(2^{2sj}\mathscr E_j(t)\big)
 +\lambda_5 2^{2j}
 \big(2^{2sj}\mathscr E_j(t)\big)\geq
 -C\varepsilon 2^{(1-s)j}\mathcal E_{\infty}^2
 -Cb^2_j(t)\mathcal D_3(t)\mathcal E_{\infty}^2.
\end{align}
For any $j\in\mathbb Z$, define
\begin{align*}
 r_j:={}&\int_0^\infty b^2_j(\tau)\mathcal D_3(\tau)\,\mathrm d\tau.
\end{align*}
Since $b_j(\tau)\to0$ as $j\to-\infty$ for almost every $\tau$, combined with the uniform bound \eqref{G3.60} and normalization \eqref{G5.8}, the dominated convergence theorem gives
\begin{align}\label{G5.14}
 \lim_{j\to-\infty}r_j=0.
\end{align}
Integrating \eqref{G5.11} over $(0,t)$ and using \eqref{G5.12} gives
\begin{align}\label{G5.15}
 2^{2sj}\mathscr E_j(t)
 \geq
 e^{-\lambda_5 2^{2j}t}
 2^{2sj}\mathscr E_j(0)
 -C\mathcal E_{\infty}^2
 \big(r_j+\varepsilon 2^{(-1-s)j}\big),
 \qquad j\leq j_0.
\end{align}

\subsection{Selection of the diffusive dyadic block}

By \eqref{G1.16}, the fixed relation between $u_0$ and $\omega_0$,
and the definition \eqref{G1.15}, there are constants
$\mathfrak a_*>0$, $L_*>0$, and a strictly decreasing sequence
$\{j_m\}_{m\geq1}$ as in \eqref{G1.15}, such that
\begin{align}\label{G5.16}
 j_m\rightarrow-\infty,
 \qquad
 1\leq j_m-j_{m+1}\leq L_*,\qquad
 2^{sj_m}
 \|\dot\Delta_{j_m}(\varrho_0,\omega_0)\|_{L^2}
 \geq\mathfrak a_*.
\end{align}
In view of \eqref{G4.40}, there is a constant $c>0$ such that
\begin{align}\label{G5.17}
 2^{2sj_m}\mathscr E_{j_m}(0)\geq
 c\mathfrak a_*^2.
\end{align}
For any sufficiently large $t$, choose $m=m(t)$ so that
\begin{align*}
 2^{2j_m}(1+t)\leq1
 <2^{2j_{m-1}}(1+t).
\end{align*}
The gap condition in \eqref{G5.16} implies
\begin{align}\label{G5.19}
 2^{-2L_*}(1+t)^{-1}
 <2^{2j_m}\leq(1+t)^{-1}.
\end{align}
Consequently,
\begin{align}\label{G5.20}
 e^{-\lambda_5 2^{2j_m}t}\geq e^{-\lambda_5}.
\end{align}
Moreover, $m(t)\to\infty$ as $t\to\infty$. By \eqref{G5.14}, there
exists $T_*>0$ such that, for $t\geq T_*$,
\begin{align}\label{G5.21}
 C\mathcal E_{\infty}^2
 \left(r_{j_{m(t)}}+\varepsilon
 2^{(-1-s)j_{m(t)}}\right)
 \leq\frac12e^{-\lambda_5}c\mathfrak a_*^2.
\end{align}
Substituting
\eqref{G5.17}, \eqref{G5.20} and \eqref{G5.21} into \eqref{G5.15}
yields
\begin{align}\label{G5.22}
 2^{2sj_{m(t)}}\mathscr E_{j_{m(t)}}(t)
 \geq\frac12e^{-\lambda_5}c\mathfrak a_*^2,
 \qquad t\geq T_*.
\end{align}

For $k=0,1$, the Littlewood--Paley almost-orthogonality,
Bernstein inequalities, \eqref{G4.40} and \eqref{G5.22} give
\begin{align*}
 \|\nabla^k(\varrho,\omega)(t)\|_{L^2}^2
 &\geq C^{-1}2^{2kj_{m(t)}}
 \|\dot\Delta_{j_{m(t)}}(\varrho,\omega)(t)\|_{L^2}^2\nonumber\\
 &\geq C^{-1}2^{2(k-s)j_{m(t)}}
 \big(2^{2sj_{m(t)}}\mathscr E_{j_{m(t)}}(t)\big)\nonumber\\
 &\geq C^{-1}\mathfrak a_*^2(1+t)^{-(k-s)},
 \qquad t\geq T_*,
\end{align*}
where \eqref{G5.19} was used in the last line. Therefore,
\begin{align}\label{G5.24}
 \|\nabla^k(\rho-\rho_*,\omega)(t)\|_{L^2}
 \geq c(1+t)^{-\frac{k-s}{2}},
 \qquad k=0,1,
 \qquad t\geq T_*.
\end{align}
The upper bound in \eqref{G1.17} is exactly \eqref{G4.119}.
Combining \eqref{G4.119} and \eqref{G5.24} yields
Theorem~\ref{T1.4}. \hfill $\Box$

Having established the sharp decay lower and upper bounds in Theorem~\ref{T1.4}, we now proceed to prove Corollary 1.5, which gives explicit optimal decay rates.
\subsection{Proof of Corollary~\ref{cor1.5}}

Assume first that
\begin{align*}
 m_\rho:=\int_{\mathbb R^3}(\rho_0-\rho_*)(x)\,\mathrm dx\neq0.
\end{align*}
Since $\rho_0-\rho_*\in L^1$, its Fourier transform is continuous and
\begin{align*}
 \widehat{\rho_0-\rho_*}(0)=m_\rho.
\end{align*}
Hence, there exists $r_*>0$ such that
\begin{align}\label{G5.27}
 |\widehat{\rho_0-\rho_*}(\xi)|
 \geq\frac{|m_\rho|}{2},
 \qquad |\xi|\leq r_*.
\end{align}
Let $\varsigma$ denote the frequency cutoff symbol associated with the dyadic projection $\dot\Delta_j$. There
exist an annulus $\mathcal A\Subset\{\frac34\leq|\xi|\leq\frac83\}$
and $c_\varsigma>0$ such that $|\varsigma(\xi)|\geq c_\varsigma$ on
$\mathcal A$. For all sufficiently negative $j$, Plancherel's
identity and \eqref{G5.27} give
\begin{align*}
 \|\dot\Delta_j(\rho_0-\rho_*)\|_{L^2}^2
 &\geq
 \int_{2^j\mathcal A}
 |\varsigma(2^{-j}\xi)|^2
 |\widehat{\rho_0-\rho_*}(\xi)|^2\,\mathrm d\xi\geq c_\varsigma^2\frac{|m_\rho|^2}{4}
 |\mathcal A|2^{3j}.
\end{align*}
Thus
\begin{align*}
 2^{-\frac32j}
 \|\dot\Delta_j(\rho_0-\rho_*)\|_{L^2}
 \geq c|m_\rho|,
\end{align*}

Similarly, if
\begin{align*}
 m_u:=\int_{\mathbb R^3}u_0(x)\,\mathrm dx\neq0,
\end{align*}
then applying the same calculation componentwise to $u_0$ and using
$\omega_0=\rho_\infty[P'(\rho_\infty)]^{-1/2}u_0$ yields
\begin{align*}
 2^{-\frac32j}\|\dot\Delta_j\omega_0\|_{L^2}
 \geq c|m_u|
\end{align*}
for all sufficiently negative $j$. Hence, the same conclusion holds.
Taking $s=-\frac32$ in Theorem~\ref{T1.4} proves
\begin{align*}
 c(1+t)^{-\frac{2k+3}{4}}
 \leq
 \|\nabla^k(\rho-\rho_*,\omega)(t)\|_{L^2}
 \leq
 C(1+t)^{-\frac{2k+3}{4}},
 \qquad k=0,1,
\end{align*}
for all sufficiently large $t$, which implies
Corollary~\ref{cor1.5}. \hfill $\Box$

\bigskip
\textbf{Acknowledgments.}
J. Ni would like to express his sincere gratitude to Prof. Renjun Duan for his valuable discussions and guidance. The research of L. Wang was partially supported by the Basic Research Program of Jiangsu Province (Grant No. BK20240058), the China Postdoctoral Science Foundation
(Grant No. 2024M751365), and the Jiangsu Funding Program for Excellent Postdoctoral Talent (Grant No. 2023ZB071). The research of Z. Zhang was supported by the National Natural Science Foundation of China (Grant Nos. 12471215 and 12331007) and the Taishan Scholars Foundation of Shandong Province (Grant No. tsqn202507101).

\vspace{2mm}

\textbf{Conflict of interest.} The authors declare that they have no conflicts of interest.

\vspace{2mm}

\textbf{Data availability statement.}
 No datasets were generated or analyzed during the current study.

\raggedbottom
\bibliographystyle{plain}

\end{document}